\documentclass[12pt]{article}

\usepackage[margin=1.15in]{geometry}
\usepackage{sectsty}
\usepackage{xcolor}
\usepackage{amssymb,amsmath}
\usepackage{amsthm}
\usepackage{mathrsfs}
\usepackage{textcomp}
\usepackage{hyperref}

\hypersetup{
colorlinks,%
citecolor=black,%
filecolor=black,%
linkcolor=black,%
urlcolor=black
}

\usepackage[abbrev,nobysame,non-compressed-cites]{amsrefs}
\usepackage{todonotes}

\usepackage{verbatim}
\numberwithin{equation}{section}

\makeatletter

\newdimen\bibspace
\makeatother

\makeatletter

\newtheorem{thm}{Theorem}[section]
\newtheorem{lem}[thm]{Lemma}
\newtheorem{prop}[thm]{Proposition}

\newtheorem{cor}[thm]{Corollary}
\newtheorem{rem}[thm]{Remark}

\def\XXint#1#2#3{{\setbox0=\hbox{$#1{#2#3}{\int}$}
\vcenter{\hbox{$#2#3$}}\kern-.5\wd0}}

\def\C{\mathscr{C}}

\newcommand{\al}{\alpha}

\newcommand{\om}{\Omega}
\newcommand{\pa}{\partial}
\newcommand{\va}{\varepsilon}

\newcommand{\be}{\begin{equation}}
\newcommand{\ee}{\end{equation}}

\newcommand{\ol}{\overline}

\newcommand{\R}{\mathbb{R}}

\newcommand{\wt}{\widetilde}

\allowdisplaybreaks

\begin{document}

\title{\textbf{Schauder estimates for the linearized very fast diffusion equations in bounded domains}
\bigskip}

\author{Tianling Jin\footnote{T. Jin was partially supported by NSFC grant 12122120, and Hong Kong RGC grants GRF 16304125, GRF 16303624 and GRF 16303822.}, \quad Xushan Tu, \quad
Jingang Xiong\footnote{J. Xiong was partially supported by NSFC grants 12325104.}, \quad Zhen Zheng}

\date{\today}

\maketitle
\begin{abstract}
We establish Schauder estimates for linearized very fast diffusion equations with Dirichlet boundary conditions in bounded smooth domains. These estimates provide an important ingredient for deriving optimal boundary regularity and long-time dynamics of solutions to very fast diffusion equations.

\medskip
\noindent{\it Keywords}:   Schauder estimates, degenerate equation.

\medskip

\noindent {\it MSC (2010)}: Primary 35B65; Secondary 35K20, 35K65.

\end{abstract}


\section{Introduction}\label{sec:introduction}
Let \(\Omega\subset\mathbb R^n\), \(n\ge1\), be a bounded domain with smooth boundary, and let \(T>0\). We study global Schauder estimates for the degenerate Cauchy--Dirichlet problem
\begin{equation}\label{eq:general linearized equation}
\begin{cases}
Lv:=v_t-\omega^\alpha\bigl(a^{ij}v_{ij}+b^iv_i\bigr)+cv
=\omega^\beta f,
& \text{in }\Omega\times (0,T],\\
v=0,
& \text{on }\partial\Omega\times (0,T],\\
v(\cdot,0)=g_0,
& \text{in }\overline\Omega,
\end{cases}
\end{equation}
where \(\alpha,\beta\in\mathbb R\), $i,j=1,\cdots,n$, the coefficients \(a^{ij}\), \(b^i\), \(c\), and \(f\) depend on \((x,t)\), and the summation convention is used. The weight \(\omega\in C^3(\overline\Omega)\) is assumed to be comparable to the boundary distance
\[
d(x):=\operatorname{dist}(x,\partial\Omega),
\]
in the sense that, for some \(\lambda\in(0,1]\),
\begin{equation}\label{eq:assumptions of omega}
\lambda d \leq \omega \leq \lambda^{-1} d,
\qquad
\|\omega\|_{C^3(\overline{\Omega})}\leq 1.
\end{equation}
We assume that \(\{a^{ij}\}\) is symmetric and uniformly elliptic, and that
\begin{equation}\label{eq:structure_conditions}
|a^{ij}|+|b^i|+|c|+|f|\leq \lambda^{-1},
\qquad
a^{ij}\xi_i\xi_j\geq \lambda|\xi|^2
\quad \text{for all } \xi\in\mathbb R^n
\end{equation}
in \(\Omega\times(0,T]\).

The motivation for this study comes from the very fast diffusion equation with infinite boundary values:
\begin{equation}\label{eq:infty boundary}
\begin{cases}
u_t-\Delta\left(\dfrac{u^m}{m}\right)=0,
& \text{in }\Omega\times (0,+\infty),\\
u=+\infty,
& \text{on }\partial\Omega\times (0,+\infty),\\
u(\cdot,0)=u_0\ge0,
& \text{in }\Omega,
\end{cases}
\end{equation}
where \(m<0\).  Daskalopoulos--del Pino~\cite{Daskalopoulos1997nonlinear} proved existence of nonnegative solutions with infinite boundary values for arbitrary nonnegative initial data in $L^\infty(\Omega)$. By contrast, the very fast diffusion equation with zero boundary data admits no continuous weak solutions, and in the whole space there are no nontrivial nonnegative solutions in
$C([0,T];L^1_{\mathrm{loc}}(\mathbb R^n))$; see V\'azquez~\cite{vazquez1992nonexistence}.

After the change of variables $v=u^m$ and $p=1/m$, problem \eqref{eq:infty boundary} becomes
\begin{equation}\label{eq:equation for v}
\begin{cases}
(v^p)_t-p\Delta v=0,
& \text{in }\Omega\times [0,+\infty),\\
v=0,
& \text{on }\partial\Omega\times (0,+\infty),\\
v(\cdot,0)=u_0^m,
& \text{in }\Omega.
\end{cases}
\end{equation}
When $p>1$, this is the fast diffusion equation, whose solutions become extinct in finite time. The extinction behavior of solutions has been studied by, e.g., 
Berryman--Holland~\cite{berryman1980stability}, 
Feireisl--Simondon~\cite{FS}, 
Bonforte--Grillo--V\'azquez~\cite{BGV},
Bonforte--Figalli~\cites{BF21,BF24},
Akagi~\cite{Akagi}, 
Jin--Xiong~\cites{JX1,JX2, JX2023, JX26}, 
Choi--McCann--Seis~\cite{CMS}, and
Choi--Seis~\cite{ChoiS}. DiBenedetto--Kwong--Vespri~\cite{dibenedetto1991local} proved that the solution $v$ satisfies the global Harnack inequality
\begin{equation}\label{eq:global harnack inequality}
0<\inf_{\Omega}\frac{v}{d}
\leq \sup_{\Omega}\frac{v}{d}<\infty
\end{equation}
up to the extinction time.  Building on \eqref{eq:global harnack inequality}, Jin--Xiong~\cites{JX1,JX2} established the optimal regularity of solutions to \eqref{eq:equation for v}, as well as their extinction behavior in regular norms. When $0<p<1$, equation \eqref{eq:equation for v} becomes the porous medium equation. It is a slow diffusion equation, which means that if the initial data is nonzero and compactly supported in $\Omega$, then the solution remains compactly supported in $\Omega$ at least for a short time. Nevertheless, Aronson--Peletier~\cite{aronson1981large} showed that the solutions satisfy the global Harnack inequality \eqref{eq:global harnack inequality} after a waiting time, and they also obtained the asymptotic behavior of solutions. Jin--Ros-Oton--Xiong~\cite{JRX} established  the optimal regularity of  solutions after this waiting time and obtained fine asymptotics of the solutions.

The linearization of \eqref{eq:equation for v} at solutions satisfying \eqref{eq:global harnack inequality} naturally leads to equations of form \eqref{eq:general linearized equation}, with \(\alpha=1-p\). In this paper, we focus on the range
\[
\alpha\in(1,2),
\qquad
\beta\in(\alpha-1,\alpha/2],
\]
which corresponds to the very fast diffusion regime \(p\in(-1,0)\) in \eqref{eq:equation for v}. The optimal regularity and long-time dynamics of solutions to \eqref{eq:equation for v} in this range will be studied in our forthcoming work~\cite{JTXZ}. Here, we isolate the important analytic component: boundary Schauder estimates for the linearized degenerate problem \eqref{eq:general linearized equation}, which will be used later in our work \cite{JTXZ}.

Throughout the paper, classical solutions are understood as functions
\[
v\in C^{2,1}_{x,t}(\Omega\times(0,T])
\cap C(\overline\Omega\times[0,T])
\]
satisfying \eqref{eq:general linearized equation} pointwise in the interior and continuously up to the parabolic boundary.  We assume, in addition, that \(a^{ij}\), \(b^i\), \(c\), and \(f\) are locally H\"older continuous in \(\Omega\times(0,T]\), and that
\[
g_0\in C_0(\overline{\Omega})
:=\{g\in C(\overline{\Omega}) : g=0 \text{ on } \partial\Omega\}.
\]
The Schauder estimates we obtain below for \eqref{eq:general linearized equation} involve the weighted H\"older spaces \(\C^{\gamma}\) and \(\C_{\alpha,\beta}^{2+\gamma}\), whose precise definitions will be given in Section \ref{sec:notation and function spaces}.

\begin{thm}\label{thm: existence and uniqueness of general linear equation}
Let $\Omega \subset \mathbb{R}^n$ be a bounded domain with smooth boundary. Let
\[
\alpha \in (1,2), \quad \beta \in (\alpha-1, \alpha/2]\quad \text{and} \quad  \gamma^* := \frac{2-2\alpha+2\beta}{2-\alpha}.
\]
Let $T>0$.  Suppose for some  exponent $\gamma \in (0, \gamma^*)$, the coefficients satisfy \eqref{eq:assumptions of omega}, \eqref{eq:structure_conditions} and 
\[
\|a^{ij}\|_{\C^\gamma(\Omega\times [0,T])} + \|b^i\|_{\C^\gamma(\Omega\times [0,T])} + \|c\|_{\C^\gamma(\Omega\times [0,T])} \le \lambda^{-1}.
\]
Assume also that $f \in \C^\gamma(\Omega \times [0,T])$. Then, for any $g_0\in C_0(\ol{\om})$, there exists a unique classical solution $v\in \C_{\alpha,\beta}^{2+\gamma}(\Omega\times (0,T])\cap C(\overline\Omega\times[0,T])$ to the initial-boundary value problem \eqref{eq:general linearized equation}.
Moreover, the following estimates hold:
\begin{enumerate}
\item[(i)] For any $t_0 \in (0,T]$, there exists $C_{T,t_0} > 0$, which depends only on $n$, $\alpha$, $\beta$, $\gamma$, $\lambda$, $t_0$, $T$, and $\|\partial\Omega\|_{C^{2,(1-\alpha/2)\gamma}}$, such that
\[
\|v\|_{\C_{\alpha,\beta}^{2+\gamma}(\Omega\times (t_0,T])}
\le C_{T,t_0}\left( \|g_0\|_{L^{\infty}(\om)}+\|f\|_{\C^\gamma(\Omega\times [0,T])}\right).
\] 
Moreover,  the quantity $\omega^{-\beta}v_t$ and $\omega^{\alpha-\beta}D^2 v$ are H\"older continuous up to the boundary, and their boundary traces satisfy
\begin{equation}\label{eq:compatibility condition}
\omega^{-\beta}v_t=0,\quad   \omega^{\alpha-\beta}D^2 v =-\frac{f}{a^{\nu \nu }} \nu \otimes \nu \quad \text{on } \partial\Omega\times(0,T],
\end{equation}
where $\nu$ denotes the unit inner normal vector field on $\partial\Omega$  and
$a^{\nu\nu}:=a^{ij}\nu_i\nu_j$.
\item [(ii)] 
If, in addition,  $g_0 \in \C^{2+\gamma}_{\alpha,\beta}(\Omega)$ and satisfies the compatibility condition
\begin{equation}\label{eq:compatibility condition t=0}
\omega^{\alpha-\beta} a^{ij}D_{ij}g_{0}=-f  \quad \text{on }\pa\om\times \{t=0\},
\end{equation}
then we have $v\in \C^{2+\gamma}_{\al,\beta}(\overline{\Omega}\times [0,T])$, and the following estimate holds:
\[
\|v\|_{\C_{\alpha,\beta}^{2+\gamma}(\Omega\times [0,T])}
\le C_T\left( \|g_0\|_{\C^{2+\gamma}_{\al,\beta}(\om)}+\|f\|_{\C^\gamma(\Omega\times [0,T])}\right),
\] 
where $C_{T} > 0$, depending only on $n$, $\alpha$, $\beta$, $\gamma$, $\lambda$, $\|\partial\Omega\|_{C^{2,(1-\alpha/2)\gamma}}$, and $T$.
\end{enumerate} 
\end{thm}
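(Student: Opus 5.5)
The plan follows the standard route for global Schauder theory: first prove a priori estimates for classical solutions, then get existence by the method of continuity plus approximation. Uniqueness and the $L^\infty$ bound come first, from the maximum principle. After the change $\tilde v=e^{-Kt}v$ with $K>\lambda^{-1}$, the operator has zeroth-order coefficient $c+K>0$, and the boundary and initial data are continuous. So $\sup|v|\le e^{KT}(\|g_0\|_{L^\infty}+C\sup|\omega^\beta f|)$, which also gives uniqueness. For later compactness I also want a decay barrier. Since $\beta>\alpha-1$, the function $\omega^{2-\alpha+\beta}$ satisfies $-\omega^\alpha a^{ij}D_{ij}(\omega^{2-\alpha+\beta})\approx(2-\alpha+\beta)(\alpha-1-\beta)a^{\nu\nu}\omega^\beta|\nabla\omega|^2\cdot(-1)$, which has the right sign. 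Hence $A\omega^{\mu}$ with $\mu$ slightly below $\min(1,2-\alpha+\beta)$, combined with a time-cutoff barrier, gives $|v|\le C\,d^{\mu}$ for $t\ge t_0$, uniformly, even for merely continuous $g_0$.

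The core step is the local boundary Schauder estimate near a flat piece of the boundary. Flatten $\partial\Omega$ with a $C^{2,(1-\alpha/2)\gamma}$ diffeomorphism so that $\omega\approx x_n$. Then rescale the anisotropic cylinders adapted to the degenerate metric: at a point with $x_n\sim r$, the natural parabolic scaling is $Q_r=B_r\times(-r^{2-\alpha},0]$ in $x$ at height $r$. There the equation is uniformly parabolic after the rescaling $x\mapsto x/r$, $t\mapsto t/r^{2-\alpha}$. Interior Schauder estimates on Whitney cylinders then give the weighted quantities $\omega^{\alpha-\beta}D^2v$ and $\omega^{-\beta}v_t$. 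The difficulty, which I expect to be the main obstacle, is uniformity up to $x_n=0$, that is, Hölder continuity of $\omega^{\alpha-\beta}D^2v$ across scales and identification of its trace.

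For this I would run a Campanato/compactness iteration at boundary points. The model problem is $v_t-x_n^\alpha\,a^{ij}(0)v_{ij}=x_n^\beta f(0)$ with $v=0$ on $x_n=0$. I would classify its solutions with polynomial growth: up to an error of order $|x|^{2-\alpha+\beta+\gamma(1-\alpha/2)}$ in the intrinsic distance, they are exactly $\ell\,x_n+\frac{-f(0)}{a^{nn}(0)(2-\alpha+\beta)(1-\alpha+\beta)}x_n^{2-\alpha+\beta}$. Here $\ell\,x_n$ is linear and harmless, since $x_n^\alpha\partial_{nn}$ kills it and $D^2$ of it vanishes. The classification would come from a Liouville theorem proved via the one-dimensional ODE in $x_n$ and tangential/time difference quotients, which are again solutions. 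The condition $\gamma<\gamma^*$ is exactly what makes the next homogeneity, $x_n^{2}\cdot x_n^{-\alpha}\cdot$(stuff), larger than the approximation order, with no resonance. Perturbing the coefficients by their $\C^\gamma$ oscillation and iterating on dyadic intrinsic cylinders gives the expansion $v=\ell(x',t)x_n+\kappa\,\omega^{2-\alpha+\beta}+O(\cdot)$. From this expansion, $\omega^{-\beta}v_t\to0$ (because $v_t=O(\omega^{\min(1,\ldots)})$ with $1>\beta$, and $\ell_t$ terms are controlled) and $\omega^{\alpha-\beta}D^2v\to-\frac{f}{a^{\nu\nu}}\nu\otimes\nu$, since $(2-\alpha+\beta)(1-\alpha+\beta)\kappa=-f/a^{\nu\nu}$. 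This is \eqref{eq:compatibility condition}. Patching boundary and interior estimates, and absorbing lower-order terms by interpolation, gives the global estimate in (i) on $(t_0,T]$ in terms of $\sup|v|$, which is then controlled by the first step.

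For existence, I would take smooth compactly supported approximations $g_0^k$ and solve regularized uniformly parabolic problems with $\omega$ replaced by $\omega+1/k$. Their solutions satisfy the estimates uniformly, since the argument is stable under this perturbation (alternatively, use the continuity method in $\C^{2+\gamma}_{\alpha,\beta}$). Arzelà–Ascoli and the uniform barrier near $t=0$ and $\partial\Omega$ then give a limit in $C(\overline\Omega\times[0,T])$ with the estimate. For (ii), the same boundary iteration is repeated at points on $\{t=0\}$. I would subtract $g_0$, so that $w=v-g_0$ solves an equation with right-hand side $\omega^\beta\tilde f$, where $\tilde f=f+\omega^{\alpha-\beta}(a^{ij}D_{ij}g_0+\dots)-\omega^{-\beta}cg_0$. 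This $\tilde f$ lies in $\C^\gamma$ and vanishes on $\partial\Omega\times\{0\}$ by \eqref{eq:compatibility condition t=0}, which lets the estimate extend down to $t=0$ with zero initial data.
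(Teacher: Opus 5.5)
Your outline is sound in its architecture and follows the paper almost step for step:
\begin{itemize}
\item the maximum principle for the $L^\infty$ bound and uniqueness;
\item barriers for boundary growth;
\item flattening, with $(\omega/d)^\alpha$ and $(\omega/d)^\beta$ absorbed into $\bar a^{ij}$ and $\bar f$;
\item approximation at each boundary point by the same profile $\ell x_n+\kappa x_n^{2-\alpha+\beta}$ that the paper calls $P_{z_0}$, via an approximation lemma and a dyadic iteration (Lemma~\ref{lem:solution approximation lem}, Theorem~\ref{thm:polynomial approximation});
\item rescaled interior Schauder estimates applied to $v-P_{z_0}$, with patching of boundary and interior seminorms (Lemma~\ref{lem:euclidean_patching});
\item subtraction of $g_0$ for (ii).
\end{itemize}

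The genuinely different ingredient is the input for the constant-coefficient model.
\begin{itemize}
\item You would classify its solutions by a Liouville theorem (difference quotients in $x'$ and $t$, then the ODE in $x_n$) plus a blow-up/compactness argument.
\item The paper instead regularizes and proves explicit $\varepsilon$-uniform bounds. Tangential second derivatives come from reapplying the Lipschitz estimate to $u_{x_j}$. The bound on $u_t$ comes from the Bernstein function $A(P-(x_n+\varepsilon)^{\alpha/2}u_{x_n})^2+\eta^2u_t^2$ of Lemma~\ref{lem:bound of vt}. Then $u_{x_nx_n}$ is read off from the equation. This gives Proposition~\ref{lem:approximation lem} with the explicit error $Cr^{3-\alpha}$.
\end{itemize}
Your route skips that computation and is more conceptual, but its constants come from a contradiction argument. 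Also, killing $v_t$ directly requires boundary decay for the model of order above its growth rate $\beta+(1-\frac{\alpha}{2})\gamma$, which can exceed $2-\alpha$. The paper gets linear decay from the iterated cutoff argument of Theorem~\ref{thm:local boundary regularity} and the Lipschitz bound of Theorem~\ref{prop:local boundary regularity}. At the corner $\partial\Omega\times\{0\}$ the paper uses the explicit barrier $2Ntx_n^{\beta+\sigma}+M|x|^2$, $\sigma=(1-\frac{\alpha}{2})\gamma$, of Lemma~\ref{lem:a supersolution up to the bottom}, rather than a model classification.

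Several auxiliary claims are wrong as stated, though each is repairable within your framework.

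(1) Since $2-\alpha+\beta>1$, one has $-\omega^\alpha a^{ij}D_{ij}\omega^{2-\alpha+\beta}\approx-(2-\alpha+\beta)(1-\alpha+\beta)a^{\nu\nu}|\nabla\omega|^2\omega^\beta<0$. So $\omega^{2-\alpha+\beta}$ is a subsolution. Only concave profiles ($\omega^\mu$ with $\mu<1$, or the paper's $x_n-x_n^{1+\tilde\beta}$) serve as barriers.

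(2) $A\omega^\mu$ plus a single time cutoff does not give $d^\mu$ for $\mu$ near $1$. Indeed $L(\eta v)=\eta\omega^\beta f+\eta'v$ has an $O(1)$ source, which $\omega^\mu$ dominates only if $\mu\le2-\alpha$. You must iterate, improving the source's decay at each step as in Theorem~\ref{thm:local boundary regularity}. This matters for your Liouville step, not for compactness.

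(3) The regularized solutions $v_k$ are $C^2$ up to $\partial\Omega$ for $t>0$, so $\omega^{\alpha-\beta}D^2v_k$ has trace $0$. The limit has trace $-\frac{f}{a^{\nu\nu}}\nu\otimes\nu$. Hence, unless $f$ vanishes on $\partial\Omega$, the $v_k$ cannot be uniformly bounded in the $\omega$-weighted $\C^{2+\gamma}_{\alpha,\beta}$ norm. Instead, as in Theorem~\ref{thm:existence in thm 1}, pass to the limit using only a uniform modulus of continuity, then apply the a priori estimate to the limit. This works because the iteration needs no a priori finiteness of the norm: the lower-order terms scale out as $b_r=rb$, $c_r=r^{2-\alpha}c$, so no interpolation or absorption is needed. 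You need neither approximation of $g_0$ nor the continuity method, which on $[0,T]$ would require corner compatibility.

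(4) $\ell$ and $\kappa$ are only H\"older in $t$, so there are no ``$\ell_t$ terms''. The quantity $\omega^{-\beta}v_t$ vanishes on $\partial\Omega$ because $P_{z_0}$ is frozen and time-independent. Thus $v_t=(v-P_{z_0})_t=O(x_n^{\beta+(1-\frac{\alpha}{2})\gamma})$ by the rescaled interior estimate. In general $v_t$ is not $O(\omega)$.

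(5) $\gamma^*$ is not a resonance threshold: staying below the next homogeneity $3-\alpha$ only needs $(1-\frac{\alpha}{2})\gamma<1-\beta$. Rather, $\gamma^*$ is sharp because $\partial_{x_n}x_n^{2-\alpha+\beta}\propto x_n^{1-\alpha+\beta}$ is exactly $\C^{\gamma^*}$ in the intrinsic metric, which caps the regularity of $Dv$.

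Finally, the claim $\tilde f\in\C^\gamma$ in (ii) uses $\omega^{1-\beta}\in\C^\gamma$ for the term $c\,\omega^{-\beta}g_0$. This is where $\beta\le\alpha/2$ enters.
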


\begin{rem}
As in Lee and Yun \cite{lee2025boundary}, one readily checks that
\[
v(x,t)=\frac{x_n^{2-\alpha}}{(2-\alpha)(\alpha-1)}, 
\]
which is only H\"older continuous up to the  boundary $\{x_n=0\}$ if $\alpha\in(1,2)$, solves $v_t-x_n^{\alpha}\Delta v=1$. Indeed, for a generic non-vanishing source term $f$, the condition $\beta > \alpha - 1$ is essential for obtaining $C^1$ regularity up to the boundary in Theorem \ref{thm: existence and uniqueness of general linear equation}. 
To see this heuristically, let us consider a simplified one-dimensional version of the operator near the boundary $x=0$, namely 
\[
v_t - x^\alpha v_{xx} = x^\beta f \quad \text{in } x >0, \quad  v=0 \quad \text{at}~x=0.
\]
As established in Section~\ref{sec: boundary holder regularity}, the solution $v$ satisfying $v = O(x^{2-\alpha+\beta})$ near $x=0$. Assuming that the solution $v$ is globally $C^1$, differentiating the equation with respect to $t$ shows that the time derivative $v_t$ satisfies a similar governing equation. Consequently,  $v_t = O(x^{2-\alpha+\beta})$.  
Dividing the equation by $x^{\beta}$, we then obtain
\[
v_{xx} =x^{\beta-\al} (-f+ x^{-\beta} v_t) =x^{\beta-\al} ( -f+O (x^{2-\alpha})) .
\]
Since the solution is $C^1$, this indicates the local integrability of $x^{\beta-\alpha} f$ near $x=0$. For a continuous source term $f$ with $f(0,1) \neq 0$, this condition necessitates that $\beta > \alpha - 1$.
\end{rem}

\begin{rem}
Note that \eqref{eq:compatibility condition} directly yields $\omega^{\alpha-\beta}a^{ij}D_{ij}v = -f$ on $\partial\Omega$. Consequently, the corner compatibility condition \eqref{eq:compatibility condition t=0} at the bottom boundary $\partial\Omega \times \{0\}$ is necessary for obtaining Schauder type estimates up to the initial time. 
\end{rem}

Since $L$ is uniformly parabolic away from \(\partial\Omega\), the main
difficulty is the boundary degeneracy caused by the factor \(\omega^\alpha\).
This difficulty is particularly pronounced in the range \(\alpha\in(1,2)\),
where even simple model equations in the above show that a non-vanishing source term may
prevent \(C^{1}\)-regularity up to the boundary. The additional vanishing
factor \(\omega^\beta\) in the right-hand side, with \(\beta>\alpha-1\), is
therefore essential: it gives the correct boundary growth of solutions, allows
one to recover Lipschitz bounds, and makes the weighted second-derivative
estimates finite. 
To capture this structure, we introduce H\"older spaces
adapted both to the intrinsic degeneracy of the operator, governed by
\(\omega^\alpha\), and to the boundary vanishing of the source term, governed
by \(\omega^\beta\). The natural second-order quantities in our setting are not \(v_t\) and \(D^2v\), but rather
$\omega^{-\beta}v_t$ and $\omega^{\alpha-\beta}D^2v$.
Accordingly, our Schauder norm contains the weighted terms
\[
    \bigl\|\omega^{-\beta}v_t\bigr\|_{\C^\gamma}
    +
    \bigl\|\omega^{\alpha-\beta}D^2v\bigr\|_{\C^\gamma},
\]
which reflects the interaction between the degeneracy \(\omega^\alpha\) and
the source vanishing \(\omega^\beta\). The proof then combines boundary growth
estimates, intrinsic rescaling, perturbative estimates, and localization near
\(\partial\Omega\).

There is by now a substantial literature on degenerate elliptic and parabolic equations of  type similar to \eqref{eq:general linearized equation}. In nondivergence form, Dong--Phan--Tran~\cite{dong2024nondivergence} studied second-order degenerate parabolic equations in the half-space under partially weighted VMO assumptions, and analogous divergence-form results were obtained in~\cite{dong2023degenerate}. Further results for singular or degenerate divergence-form parabolic equations and systems include
Bekmaganbetov--Dong~\cite{bekmaganbetov2025singulardegenerate},
Dong--Jeon~\cites{dong2025schaudertypeestimatesdegenerate,dong2026degenerate},
Dong--Phan~\cites{dong2021parabolic, dong2023parabolic},
and Jin--Xiong~\cite{JX3}.
For the critical case $\alpha=2$, Dong--Ryu~\cite{dong2025nondivergence} proved unique solvability in weighted mixed-norm Sobolev spaces, with related divergence-form results in Dong--Ryu~\cite{dong2026sobolev}. For $\alpha=1$, Daskalopoulos--Hamilton~\cite{daskalopoulos1998regularity} established existence, uniqueness and regularity of the solutions, and free-boundary regularity for the porous medium equation in two dimensions; related results for $\alpha\in(0,1)$ were obtained by Kim--Lee~\cite{kim2009smooth}. Kim--Lee--Yun \cites{kim2023higher,kim2024generalized} developed higher-order generalized Schauder theories for some related degenerate equations in which the degeneracy occurs only in the mixed and normal derivatives involving the normal variable, covering different ranges of the degeneracy exponent, for $1<\alpha< 2$ and $\alpha\leq 1$, respectively.

This paper is organized as follows: in Section \ref{sec:notation and function spaces}, we introduce some notations and the weighted H\"older spaces designed for our equation \eqref{eq:general linearized equation}. In Section \ref{sec: boundary holder regularity}, we consider the regularized approximating equation and obtain corresponding estimates. In Section \ref{sec:schauder regularity on halc cylinder}, we prove boundary Schauder estimates for model equations in a half-cylinder, first for constant coefficients and then for variable coefficients by perturbation.  In Section \ref{sec:proofofmainthm}, we pass to general bounded domains and prove Theorem \ref{thm: existence and uniqueness of general linear equation}.

\medskip

\textbf{AI statement:} All the mathematical results, overall proof architecture, and key arguments were developed by the authors. During later-stage revisions, AI tools were used to assist with editing and with simplifying some auxiliary arguments. All AI-assisted modifications were independently reviewed and verified by the authors, who take full responsibility for the final manuscript.

\section{Notations and function spaces}\label{sec:notation and function spaces}

In Sections \ref{sec: boundary holder regularity} and \ref{sec:schauder regularity on halc cylinder}, we restrict our attention to the special case where the spatial domain is a subset of the upper half-space $\R^n_+ := \left\{x = (x', x_n) \in \R^n \mid\ x_n > 0\right\}$. In this setting, we study the classical solution $u$ of the equation
\[
L u = f,
\]
where the degenerate parabolic operator $L$ is defined as
\begin{equation}\label{eq:typical operator Q1}
Lv :=  v_t - x_n^\alpha (a^{ij}v_{x_i x_j} + b^i v_{x_i}) + cv
\end{equation}
(which will be generalized to $Lv := v_t  - \omega(x)^\alpha (a^{ij}v_{x_i x_j} + b^i v_{x_i}) + cv$ later in Section \ref{sec:proofofmainthm}).

Unless otherwise specified, by a solution $u$ we mean a classical solution. That is, for a given space-time domain $Q$, the solution satisfies $u \in C^{2,1}_{x,t}(Q) \cap C(\overline{Q})$; it possesses continuous second-order spatial derivatives and a continuous first-order time derivative in the interior, and remains continuous up to the boundary.

For $x_0\in \R^n$ and $r>0$, we define the standard Euclidean open ball and half-ball as
\[
B_r(x_0):= \left\{x\in\R^n\mid |x-x_0|<r\right\}, \quad 
B_r^+(x_0):=B_r(x_0)\cap\left\{x_n > 0\right\}.
\]
For $X_0=(x_0,t_0)\in \R^n\times\R$, we define the standard parabolic cylinder and half-cylinder adapted to the degenerate boundary scaling as
\[
Q_r(X_0):=B_r(x_0)\times (t_0-r^{2-\al},t_0], \quad Q_r^+(X_0):=Q_r(X_0)\cap\{x_n > 0\}.
\]
We denote by $\pa_pQ_r(X_0)$ the parabolic boundary of $Q_r(X_0)$, defined by
\[
\pa_p Q_r(X_0):=\left(\pa B_r(x_0)\times [t_0-r^{2-\al}, t_0 ]\right)
\bigcup
\left(B_r(x_0)\times \left\{t=t_0-r^{2-\al}\right\}\right).
\]
For simplicity, we omit $x_0$ or $X_0$ when they are the origin or $(0,1)$.

\subsection{Intrinsic parabolic metric and associated function spaces}\label{sec:metric and holder space}

Following the approach of Daskalopoulos--Hamilton
\cite{daskalopoulos1998regularity} for the case $\alpha=1$ and that of
Kim--Lee \cite{kim2009smooth} for the case $\alpha\in(0,1)$, we equip
$\mathbb R^n_+$ with an intrinsic Riemannian metric adapted to the degeneracy
of the operator \eqref{eq:typical operator Q1} near the boundary $\{x_n=0\}$.
More specifically, for the model degenerate parabolic operator
\[
L_0 v := v_t - x_n^\alpha \Delta v,
\]
the metric is defined by the line element
\[
d\ell^2=\frac{dx_1^2+dx_2^2+\cdots+dx_n^2}{2x_n^\alpha}.
\]
Let $s(x,y)$ denote the geodesic distance on $\mathbb{R}^n_+$ induced by this metric.
Then, $s(x,y)$ is comparable to 
\[
\ol{s}[x,y]
=\frac{ |x-y|}
{|x_n|^{\frac{\al}{2}}+|y_n|^{\frac{\al}{2}}
+|x -y|^{\frac{\al}{2}}}
\]
in the sense 
\[
c(n,\alpha)\bar s(x,y)\le s(x,y)\le C(n,\alpha)\bar s(x,y).
\]
The following parabolic distance on $\mathbb{R}^n_+\times\mathbb{R}$ reflects the natural space--time scaling of the operator:
\[
s\bigl[(x ,t_1),(y,t_2)\bigr]
:=\max \left\{s[x,y], \sqrt{|t_1-t_2|}\right\}.
\]

\begin{rem}\label{rem:metric_comparison}
The relationship between the standard cylinder $Q_r(X_0)$ and the intrinsic distance $s$ can be intuitively understood via the following local equivalences:
\begin{itemize}
    \item \textbf{At a boundary point} ($X_0 \in \{x_n = 0\}$): 
    \[
    X \in \pa_p Q^+_r(X_0) \iff s[X, X_0] \approx r^{1-\frac{\alpha}{2}}.
    \]
    \item \textbf{At a strictly interior point} ($X_0$ with $x_{0,n} = d > 0$, for $r \leq 7d/8$): 
    \[
    X \in Q_r(X_0) \iff s[X, X_0] \approx  x_{0,n}^{-\frac{\alpha}{2}}  |x - x_0| + \sqrt{|t - t_0|}.
    \]
\end{itemize}
\end{rem}

Endowed with this parabolic metric $s$, we denote by $\C^\gamma(Q) := \C_{\mathbb{R}^n_+}^\gamma(Q)$ the space of H\"older continuous functions on a space-time domain $Q \subset \mathbb{R}^n_+ \times [0,+\infty)$. For any real-valued function $u$ defined on $Q$ and an exponent $\gamma \in (0,1)$, the corresponding H\"older seminorm and full norm are defined, respectively, by
\[
[u]_{\C^\gamma(Q)}
=\sup_{\substack{P_1,P_2\in Q\\ P_1\neq P_2}}
\frac{|u(P_1)-u(P_2)|}{s[P_1,P_2]^\gamma},
\]
and
\[
\|u\|_{\C^{\gamma}(Q)}
=\|u\|_{L^\infty(Q)}+[u]_{\C^\gamma(Q)}.
\]
With this norm, the space $\C^\gamma(Q)$ of H\"older continuous functions on $Q$ (with respect to the metric $s$) is a Banach space.

For $\al\in (1,2)$ and $\beta\in (\al-1,\al/2]$, we then define  $\C_{\al,\beta}^{2+\gamma}(Q):=\C_{\al,\beta,\R_+^n}^{2+\gamma}(Q)$
to be the Banach space of all functions $u$ on $Q$ such that the norm
\[
\|u\|_{\C_{\al,\beta}^{2+\gamma}(Q)}
=\|u\|_{\C^\gamma(Q)}
+\sum_{i=1}^{n}\|u_{x_i}\|_{\C^\gamma(Q)}
+\|x_n^{-\beta}u_t \|_{\C^\gamma(Q)}
+\sum_{1\leq i\leq j\leq n}\|x_n^{\al-\beta} u_{x_ix_j}\|_{\C^\gamma(Q)}
\]
is finite. 

For comparison, we recall the classical H\"older spaces in the space and time variables. For $0<\gamma<1$, we write
\[
C^\gamma(Q):=C_x^\gamma(Q)\cap C_t^{\gamma/2}(Q),
\]
and
\[
C^{2,\gamma}(Q):=C_x^{2,\gamma}(Q)\cap C_t^{1,\gamma/2}(Q).
\]

In Section \ref{sec:proofofmainthm},  we study the problem on a general bounded domain $\Omega \subset \mathbb{R}^n$ with a smooth boundary. To properly formulate Theorem \ref{thm: existence and uniqueness of general linear equation} and accurately capture the degenerate behavior of the operator 
$
Lv := v_t - \omega^\alpha \left( a^{ij} v_{ij} + b^i v_i \right) + c v
$
near $\partial \Omega$, we now extend the aforementioned definitions of the metric $s$ and associated function spaces $\C$ from $\R_+^n$ to the whole domain $\Omega$ as follows.

Away from the boundary, the distance function $s_{\Omega}$ on $\ol{\Omega}$ is equivalent to the standard Euclidean distance. Near the boundary, $s_{\Omega}$ is defined by pulling back the corresponding distance $s$ on $\mathbb{R}^n_+$, through local boundary-flattening maps $\varphi:\mathbb{R}^n_+\to \Omega$.
The distance $s_{\Omega}$ is comparable to the quantity
\[
\ol{s}_{\Omega}[x,y]
=\frac{|x-y|}
{d(x)^{\frac{\al}{2}}+d(y)^{\frac{\al}{2}}
+|x-y|^{\frac{\al}{2}}}.
\]
The parabolic distance in the cycloidal metric is given by
\[
\ol{s}_{\Omega}[(x ,t_1),(y,t_2)]=\max \left\{s_{\Omega}[x,y],~\sqrt{|t_1-t_2|}\right\}.
\] 
By abuse of notation, we write $s$ and $\ol{s}$ for $s_{\Omega}$ and $\ol{s}_{\Omega}$.

Let $\widetilde{Q}$ be a subset of the cylinder $\om\times [0,T]$. We denote by $\C^\gamma(\widetilde{Q}):=\C_{\Omega}^\gamma(\widetilde{Q})$ the space of
H\"{o}lder continuous functions on $\widetilde{Q}$ with respect to the metric $s_{\Omega}$. We define $\C_{\alpha,\beta}^{2+\gamma}(\widetilde{Q}):=\C_{\alpha,\beta,\Omega}^{2+\gamma}(\widetilde{Q})$ to be the space of all functions
$w$ on $\widetilde{Q}$ such that $w$, $w_{x_i}$, $d^{-\beta}w_t$  and $d^{\alpha-\beta}w_{x_ix_j}$ admit continuous extensions up to the boundary of $\widetilde{Q}$, and these extensions
belong to $\C^\gamma(\widetilde{Q})$. Here $i,j\in\{1,\dots,n\}$ and $d$ denotes the distance to $\pa\om$.
Both $\C^\gamma(\widetilde{Q})$ and $\C_{\alpha,\beta}^{2+\gamma}(\widetilde{Q})$ are Banach spaces under the norms $\|\cdot\|_{\C^\gamma(\widetilde{Q})}$ and
\[ 
\|w\|_{\C_{\al,\beta}^{2+\gamma}(\widetilde{Q})}=\|w\|_{\C^\gamma(\widetilde{Q})}+\sum_{i=1}^{n}\|w_{x_i}\|_{\C^\gamma(\widetilde{Q})}+\|d^{-\beta}w_t\|_{\C^\gamma(\widetilde{Q})}
+\sum_{1\le i\le j\le n}\|d^{\al-\beta}w_{x_ix_j}\|_{\C^\gamma(\widetilde{Q})}. 
\]
For simplicity, in the statements below we omit $\al$ and $\beta$ from the subscript notation. 

A function $g\in \C_{\al,\beta}^{\gamma}(\Omega)$ or $\C_{\al,\beta}^{2+\gamma}(\Omega)$ is regarded, respectively, as an element of $\C_{\al,\beta}^{2+\gamma}(\widetilde{Q})$ or $\C_{\al,\beta}^{2+\gamma}(\widetilde{Q})$ by identifying it with its time-independent extension to $\widetilde{Q}$.
\bigskip

\subsection{Equivalent forms of the Hölder norm}
\label{sec:holder}
In this section, we focus on the equivalent H\"{o}lder norm on $\R^n_+$. For a function $v \in C(\overline{Q}_1^+)$, we introduce the boundary pointwise H\"{o}lder seminorm
\[
[v]_{\C_{bd}^{\gamma}(Q_1^+)} := \sup_{\substack{z_0 \in \partial_p Q_1^+ \cap \{x_n = 0\} \\ z_1 \in Q_1^+,\ z_1 \neq z_0}} \frac{|v(z_1) - v(z_0)|}{s[z,z_0]^\gamma},
\]
and the weighted interior Euclidean seminorm
\[
[v]_{\C_{int}^{\gamma}(Q_1^+)} := \sup_{\substack{z_0, z_1 \in Q_1^+ \\ 0 < s[z_1, z_0] \le \frac{1}{2}x_{0,n}^{1-\frac{\alpha}{2}}}} \frac{|v(z_1) - v(z_0)|}{x_{0,n}^{-\frac{\alpha}{2}\gamma} |x_1 - x_0|^\gamma + |t_1 - t_0|^{\frac{\gamma}{2}}},
\] 
where $z_0=(x_0,t_0)$, $z_1=(x_1,t_1)$, and $x_{0,n}$ denotes the $n$-th coordinate of $z_0$. 

\begin{lem} 
\label{lem:euclidean_patching}
Suppose $v \in C(\overline{Q}_1^+)$ such that $[v]_{\C_{bd}^{\gamma}(Q_1^+)} +[v]_{\C_{int}^{\gamma}(Q_1^+)} < \infty$. Then, $v \in \C^\gamma(\overline{Q}_1^+)$ with respect to the intrinsic parabolic distance $s[\cdot, \cdot]$, and we have the uniform global estimate
\[
[v]_{\C^\gamma(Q_{1}^+)} \le C(n, \alpha, \gamma) \left( [v]_{\C_{bd}^{\gamma}(Q_{1}^+)} + [v]_{\C_{int}^{\gamma}(Q_{1}^+)} \right).
\]
\end{lem}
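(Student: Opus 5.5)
Fix two points $z_0=(x_0,t_0)$ and $z_1=(x_1,t_1)$ in $Q_1^+$, and label them so that $x_{0,n}\ge x_{1,n}$. Write $\rho:=s[z_1,z_0]$. The plan is to split according to whether $\rho$ is small or large compared with the intrinsic size $x_{0,n}^{1-\frac{\alpha}{2}}$ of the point farther from the boundary.

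\textbf{Case 1: $\rho\le \frac12 x_{0,n}^{1-\frac{\alpha}{2}}$.} This is exactly the range covered by $[v]_{\C_{int}^{\gamma}}$. It then suffices to show
\[
x_{0,n}^{-\frac{\alpha}{2}\gamma}|x_1-x_0|^\gamma+|t_1-t_0|^{\frac{\gamma}{2}}\le C\rho^\gamma .
\]
The time part is immediate, since $\sqrt{|t_1-t_0|}\le\rho$. For the space part we use the comparability $s\approx\bar s$. The condition $\bar s[x_1,x_0]\lesssim x_{0,n}^{1-\alpha/2}$ forces $|x_1-x_0|\le C x_{0,n}$. One checks this by contradiction: if $|x_1-x_0|\ge Kx_{0,n}$, then $\bar s\gtrsim |x_1-x_0|^{1-\alpha/2}\ge K^{1-\alpha/2}x_{0,n}^{1-\alpha/2}$, which is too large when $K$ is large. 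Hence the denominator of $\bar s$ is comparable to $x_{0,n}^{\alpha/2}$, so $|x_1-x_0|\,x_{0,n}^{-\alpha/2}\le C\bar s\le C\rho$. (The constant $\frac12$ may need adjusting against the comparability constants $c(n,\alpha)$, $C(n,\alpha)$; this is harmless.)

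\textbf{Case 2: $\rho> \frac12 x_{0,n}^{1-\frac{\alpha}{2}}$.} Here we pass through the boundary. Let $\hat x_i=(x_i',0)$ be the projection of $x_i$ onto $\{x_n=0\}$ and set $\hat z_i=(\hat x_i,t_i)$. Then
\[
|v(z_1)-v(z_0)|\le |v(z_1)-v(\hat z_1)|+|v(\hat z_1)-v(z_0)|,
\]
and both terms are controlled by $[v]_{\C_{bd}^\gamma}$ with base point $\hat z_1$, provided $\hat z_1$ is an admissible boundary point. It remains to bound the two distances:
\[
s[z_1,\hat z_1]\lesssim x_{1,n}^{1-\alpha/2}\le x_{0,n}^{1-\alpha/2}\lesssim\rho .
\]
For the second distance, $s[\hat z_1,z_0]$ has time part equal to $\sqrt{|t_1-t_0|}\le\rho$, and spatial part bounded by
\[
\bar s[\hat x_1,x_0]\lesssim |\hat x_1-x_0|^{1-\alpha/2}\le(|x_1-x_0|+x_{1,n})^{1-\alpha/2}.
\]
We use that $\bar s[x,y]\gtrsim |x-y|^{1-\alpha/2}$ whenever $|x-y|\gtrsim\max(x_n,y_n)$, and otherwise $|x-y|\lesssim x_{0,n}$, in which case $|x-y|^{1-\alpha/2}\lesssim x_{0,n}^{1-\alpha/2}\lesssim\rho$. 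Either way $(|x_1-x_0|+x_{0,n})^{1-\alpha/2}\lesssim\rho$. An alternative is to use the projection $\hat z_0$ together with the triangle inequality for $s$, which gives $s[\hat z_1,z_0]\le s[\hat z_1,z_1]+\rho$ directly.

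\textbf{Main obstacle.} The delicate point is the bookkeeping of whether the projected point $\hat z_1$ actually lies in $\partial_pQ_1^+\cap\{x_n=0\}$, which is where the boundary seminorm is defined. One needs $|\hat x_1|<1$, and it must be arranged that the time level is admissible. Since $x_1\in B_1^+$, we have $|x_1'|\le|x_1|<1$, so $\hat x_1$ lies in the flat part of the boundary. Should the definition require closure, we take the limit by continuity of $v$ on $\overline{Q}_1^+$. In the final step we sum the cases and take the supremum, which yields the constant $C(n,\alpha,\gamma)$.
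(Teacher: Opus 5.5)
Your proof is correct and follows the paper's argument: the same case split at the scale $\frac12 x_{0,n}^{1-\frac{\alpha}{2}}$, with the interior seminorm handling nearby points and a boundary projection plus the triangle inequality for $s$ handling far-apart points. The differences are cosmetic. You order the points and project the one closer to $\{x_n=0\}$, whereas the paper projects $z_0$. You also spell out the comparability $x_{0,n}^{-\frac{\alpha}{2}\gamma}|x_1-x_0|^\gamma+|t_1-t_0|^{\frac{\gamma}{2}}\le C\rho^\gamma$, which the paper's interior case only asserts.
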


\begin{proof}
Let $z_0, z_1 \in \overline{Q}_1^+$ and denote $r =  x_{0,n}$.
We divide the proof into two cases.

\textbf{Case 1:} $s[z_0, z_1] \ge \frac{1}{2}r^{1-\frac{\alpha}{2}}$. 
Let us write $z_0 = \bar{z}_0 + x_{0,n}e_n$ for some boundary projection point $\bar{z}_0 \in \partial_p Q_1^+ \cap \{x_n = 0\}$. The triangle inequality and the definition of $[v]_{\C_{bd}^{\gamma}}$ then yield:
\[
|v(z_0) - v(z_1)| \le |v(z_0) - v(\bar{z}_0)| + |v(z_1) - v(\bar{z}_0)| \le (2^{\gamma+1}+4^{\gamma+1})[v]_{\C_{bd}^{\gamma}} s[z_0, z_1]^\gamma.
\]  

\textbf{Case 2:} $s[z_0, z_1] \le \frac{1}{2}r^{1-\frac{\alpha}{2}}$. 
In this strictly interior regime, we have
\[
|v(z_0) - v(z_1)| \le  [v]_{\C_{int}^\gamma} s[z_0, z_1]^\gamma  \leq C  [v]_{\C_{int}^\gamma}  \left(r^{-\frac{\alpha}{2}\gamma} |x_0 - x_1|^\gamma + |t_0 - t_1|^{\frac{\gamma}{2}}\right).
\]
Combining both cases, we establish that
\[
[v]_{\C^\gamma(Q_{1}^+)} \le C(n, \alpha, \gamma) \left( [v]_{\C_{bd}^{\gamma}(Q_{1}^+)} + [v]_{\C_{int}^{\gamma}(Q_{1}^+)} \right).
\]
\end{proof}

\subsection{Parabolic scaling and metric homogeneity}
\label{sec:scaling}
The structure of the degenerate operator and the intrinsic metric $s[\cdot, \cdot]$ are fundamentally determined by the natural scaling of the equation near the boundary $\{x_n = 0\}$. For a fixed boundary point $z_0 = (x', 0, t_1)$ and a scale factor $r > 0$, we consider the parabolic rescaling
\be\label{eq:change of variables}
x = (x', 0) + ry, \quad t = t_1 + r^{2-\alpha} (\tau-1).
\ee
To preserve the non-homogeneous structure of the equation (i.e., keeping the right-hand side invariant), we scale the function $v$ by the potential factor $r^{2-\alpha}$:
\begin{equation}\label{eq:scale at x_n=0}
v_r(y,\tau):=v_{r,z_0}(y,\tau) = \frac{v((x',0)+ry, t_1 + r^{2-\alpha}(\tau-1))}{r^{2-\alpha}}
\end{equation}
A direct computation shows that if $v$ satisfies the equation 
\[
\partial_t v - x_n^\alpha (a^{ij}v_{x_i x_j} + b^i v_{x_i}) + cv = f,
\]
then the rescaled function $v_r$ perfectly preserves the principal operator:
\be\label{eq:scaled version equation}
\partial_\tau v_r - y_n^\alpha (a^{ij}_r \partial_{y_i y_j} v_r +  b^i_r \partial_{y_i} v_r) + c_r v_r = f_r,
\ee
where 
\be\label{eq:scaled coefficients}
\begin{aligned}
a_r^{ij}(y,\tau)&=a^{ij} ((x',0)+ry, t_1 + r^{2-\alpha}(\tau-1)), \\
b_r^i(y,\tau)&=rb^i((x',0)+ry, t_1 + r^{2-\alpha}(\tau-1)),\\
c_r(y,\tau)&=r^{2-\alpha}c((x',0)+ry, t_1 + r^{2-\alpha}(\tau-1)), \\
f_r(y,\tau)&= f((x',0)+ry, t_1 + r^{2-\alpha}(\tau-1)).
\end{aligned}
\ee
The leading principal part of the operator is exactly preserved under this transformation, while the lower-order terms vanish in the blow-up limit as $r \to 0$ (since $1 < \alpha < 2$, implying $2-\alpha > 0$).

Crucially, the intrinsic parabolic distance scales homogeneously under this coordinate transformation. 
For any two points $z, \bar{z}$ corresponding to the rescaled points $\zeta=(y,\tau)$ and $\bar{\zeta}=(\bar{y},\bar{\tau})$, we have
\[
s[z, \bar{z}]  = r^{1-\frac{\alpha}{2}} s[\zeta, \bar{\zeta}].
\]
Let $A_r$ be the rescaled domain corresponding to $A$ under the change of variables \eqref{eq:change of variables}. Then the intrinsic H\"{o}lder norm and seminorm scale as:
\[
\|v\|_{L^\infty(A)} = r^{2-\alpha}\|v_r\|_{L^\infty(A_r)}, \quad [v]_{\C^\gamma(A)} = r^{(2-\gamma)(1-\frac{\alpha}{2})} [v_r]_{\C^\gamma(A_r)}.
\]
For each integer $k,l \ge 0$, the norms of derivatives scale as:
\[ 
\| D_x^k\partial_t^l v\|_{L^\infty(A)} = r^{(2-2l)(1-\frac{\alpha}{2}) - k} \| D_y^k\partial_\tau^l v_r\|_{L^\infty(A_r)}, 
\]
\[
[ D_x^k\partial_t^l v]_{\C^\gamma(A)} = r^{(2-2l-\gamma)(1-\frac{\alpha}{2}) - k} [ D_y^k\partial_\tau^l v_r]_{\C^\gamma(A_r)}.
\]
Moreover, for each real number $\sigma \in \mathbb{R}$, the weighted norms scale precisely as:
\[
\| x_n^{\sigma}D_x^k\partial_t^l v\|_{L^\infty(A)} = r^{\sigma + (2-2l)(1-\frac{\alpha}{2}) - k} \| y_n^{\sigma}D_y^k\partial_\tau^l v_r\|_{L^\infty(A_r)},
\]
\[
[ x_n^{\sigma}D_x^k\partial_t^l v]_{\C^\gamma(A)} = r^{\sigma + (2-2l-\gamma)(1-\frac{\alpha}{2}) - k} [ y_n^{\sigma}D_y^k\partial_\tau^l v_r]_{\C^\gamma(A_r)}.
\]

In addition, the intrinsic interior H\"{o}lder seminorm $[\cdot]_{\C^\gamma_{int}}$ of the function and its derivatives are bounded by the standard classical H\"{o}lder seminorms of the rescaled functions
\begin{lem}
\label{lem:interior_seminorm_scaling}
We have for each $0 < \rho < 1$, 
\begin{equation}\label{eq:seminorm_scaling}
[v]_{\C_{int}^{\gamma}(Q_\rho^+)} =\sup_{\substack{z_0 \in \partial_p Q_\rho^+ \cap \{x_n = 0\} \\ 0 < r < \rho }} r^{(2-\gamma)(1-\frac{\alpha}{2})} [v_{r,z_0}]_{C^\gamma(Q_{1/2}(0,1,1))}
\end{equation}
and for each integer $k,l \ge 0$ and real number $\sigma \in \mathbb{R}$,
\begin{equation}\label{eq:seminorm_scaling weight}
[x_n^{\sigma}D_x^k\partial_t^l v]_{\C_{int}^{\gamma}(Q_\rho^+)} = \sup_{\substack{z_0 \in \partial_p Q_\rho^+ \cap \{x_n = 0\} \\ 0 < r < \rho }} r^{\sigma + (2-2l-\gamma)(1-\frac{\alpha}{2}) - k}[y_n^{\sigma}D_y^k\partial_\tau^l  v_{r,z_0}]_{C^\gamma(Q_{1/2}(0,1,1))},
\end{equation}
where the right-hand sides are restricted to the subset of $Q_\rho^+$ where $v$ is well-defined.
\end{lem}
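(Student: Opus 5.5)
The plan is to unwind the definition of $[\cdot]_{\C_{int}^\gamma(Q_\rho^+)}$ and rewrite each admissible pair of points in the rescaled variables \eqref{eq:change of variables}, with the boundary point and scale chosen so that the base point of the pair lands at height $y_n=1$. Fix a pair $z_0=(x_0,t_0)$, $z_1=(x_1,t_1)$ in $Q_\rho^+$ with $0<s[z_1,z_0]\le \frac12 x_{0,n}^{1-\alpha/2}$. Set $r:=x_{0,n}$, and let $\bar z_0=(x_0',0,t_0)$ be the projection of $z_0$ onto the flat boundary. Rescale around $\bar z_0$ with this $r$, taking $t_1$ in \eqref{eq:change of variables} equal to $t_0$.

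Then $z_0$ corresponds to $\zeta_0=(0,\dots,0,1,1)$. By the homogeneity $s[z,\bar z]=r^{1-\alpha/2}s[\zeta,\bar\zeta]$, the constraint on the pair becomes $s[\zeta_1,\zeta_0]\le \frac12$. Near height $y_n=1$ the intrinsic metric is comparable to the Euclidean parabolic distance (Remark \ref{rem:metric_comparison}), so $\zeta_1$ lies in a fixed cylinder around $(0,1,1)$. The numerical constants in the definitions are arranged precisely so that this cylinder is $Q_{1/2}(0,1,1)$.

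Next compare the denominators directly. Since $x_1-x_0=r(y_1-y_0)$ and $t_1-t_0=r^{2-\alpha}(\tau_1-\tau_0)$, we get
\[
x_{0,n}^{-\frac{\alpha}{2}\gamma}|x_1-x_0|^\gamma+|t_1-t_0|^{\frac\gamma2}
= r^{(1-\frac\alpha2)\gamma}\Bigl(|y_1-y_0|^\gamma+|\tau_1-\tau_0|^{\frac\gamma2}\Bigr).
\]
For the numerators, $v(z_i)=r^{2-\alpha}v_{r}(\zeta_i)$. Hence the difference quotient of $v$ equals $r^{2-\alpha-(1-\alpha/2)\gamma}=r^{(2-\gamma)(1-\alpha/2)}$ times the standard parabolic difference quotient of $v_{r,\bar z_0}$ at $\zeta_0,\zeta_1$. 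Taking the supremum over all admissible pairs gives ``$\le$'' in \eqref{eq:seminorm_scaling}.

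For ``$\ge$'', run the same identity backwards. Any boundary point $z_0\in\partial_pQ_\rho^+\cap\{x_n=0\}$, scale $r<\rho$, and pair of points in $Q_{1/2}(0,1,1)$ correspond to a pair in $Q_\rho^+$ (wherever $v$ is defined, which is the stated restriction). If the base point is taken at or comparable to $y_n=1$, this pair satisfies the interior constraint. To get genuine equality rather than comparability, I would observe that a pair in $Q_{1/2}(0,1,1)$ not based at height exactly $1$ can be recentered: replace $r$ by $r y_{0,n}$ and shift $z_0$ horizontally. This reproduces the same difference quotient with the weight exponent matching. The one point to check is that the resulting height weight $x_{0,n}$ matches the factor $r^{-\alpha\gamma/2}$. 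If exact equality fails here, the statement should be read up to constants depending on $(n,\alpha,\gamma)$, which is how it is used.

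For \eqref{eq:seminorm_scaling weight}, apply the same argument to $w=x_n^\sigma D_x^k\partial_t^l v$. The chain rule gives
\[
x_n^\sigma D_x^k\partial_t^l v = r^{\sigma+(2-2l)(1-\frac\alpha2)-k}\, y_n^\sigma D_y^k\partial_\tau^l v_r,
\]
so the prefactor $r^{2-\alpha}$ above is replaced by this power. Multiplying by $r^{-(1-\alpha/2)\gamma}$ from the denominator yields the stated exponent.

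The main obstacle is the bookkeeping in the reverse inequality. Specifically, one must ensure the map between admissible interior pairs and pairs in $Q_{1/2}(0,1,1)$ is onto in both directions. The first thing to try is the recentering trick, and failing that, accepting comparability constants.
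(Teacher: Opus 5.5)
Your argument is the same as the paper's. The paper's proof consists exactly of substituting $v=r^{2-\alpha}v_{r,z_0}$, factoring $r^{(1-\frac{\alpha}{2})\gamma}$ out of the denominator as in your displayed identity, and, for the weighted version, using the chain-rule scaling you wrote down. Your doubt about literal equality is justified, and neither the ``precisely $Q_{1/2}(0,1,1)$'' claim nor the recentering removes it:
\begin{itemize}
\item the admissible set $\{s[\zeta_1,(e_n,1)]\le\frac12\}$ contains points directly below $e_n$ at height slightly less than $\frac12$, since the vertical $s$-distance from height $1$ to height $\frac12$ is $\frac{1-2^{\alpha/2-1}}{\sqrt2\,(1-\alpha/2)}<\frac{\ln 2}{\sqrt2}<\frac12$;
\item any cylinder at a scale $\tilde r$ containing such a pair has $\tilde r\neq x_{0,n}$, so it carries the weight $\tilde r^{-\alpha\gamma/2}$ instead of $x_{0,n}^{-\alpha\gamma/2}$;
\item conversely, pairs in $Q_{1/2}(0,1,1)$ with $|\tau_1-\tau_0|$ close to $2^{\alpha-2}>\frac14$ are not admissible.
\end{itemize}
So what this computation yields, for you and for the paper alike, is two-sided comparability with constants depending on $n,\alpha,\gamma$, via a bounded chaining along segments; that is all the later proofs use.
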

\begin{proof}
This estimate follows directly by substituting the scaling $v = r^{2-\alpha} v_{r,z_0}$ into the intrinsic H\"{o}lder quotient and algebraically factoring out the weight $r^{(1-\frac{\alpha}{2})\gamma}$ from the denominator.
\end{proof}

\begin{thm}[Classical parabolic regularity, \cite{lieberman1996second}]
\label{thm:standard_interior_boundary_estimates}
Let $U = \Omega_0 \times (t_0, t_1)$ be a local parabolic cylinder, and denote $V = U \cap Q_{3/4}(0,1,1)\subset \{x_n\geq 1/4 \}$. Suppose that $u$ is a classical solution to the equation
\[
\begin{cases}
u_t  - x_n^\alpha (a^{ij}u_{x_i x_j} + b^i u_{x_i}) + cu = \tilde{f} &  \text{in } V,\\
u = g & \text{on } W:=(\partial_p U) \cap Q_{3/4}(0,1,1),
\end{cases}
\]
where the coefficients satisfy the uniform boundedness and ellipticity assumptions \eqref{eq:structure_conditions} and we set $g\equiv 0$ if $W$ is empty.
\begin{enumerate}
    \item [(i)] \textbf{(H\"{o}lder Estimate)} If $\partial \Omega_0 \cap B_{3/4}(0,1)$ is either empty or is Lipschitz with its Lipschitz constant bounded by $\lambda^{-1}$, there exists a universal exponent $\gamma_0 \in (0,1)$, depending only on $n, \lambda$, and $\alpha$, such that $u \in C^{\gamma_0}(\overline{V \cap Q_{1/2}(0,1,1)})$ with
\begin{equation}\label{eq:classical holder}
    \|u\|_{C^{\gamma_0}(\overline{V \cap Q_{1/2}(0,1,1)})} \leq C \left( \|u\|_{L^\infty(V)} + \|g\|_{C^{\gamma_0}(W)} + \|\tilde{f}\|_{L^\infty(V)} \right),
\end{equation}
where $C = C(n, \lambda, \alpha) > 0$. Suppose additionally that $\|a_{ij}\|_{C^{\wt\gamma}(V)} \leq \lambda^{-1}$ for some $\tilde{\gamma} > 0$. Then, the Hölder estimates hold with $\gamma_0$ replaced by any $\wt\gamma \in (0,1)$, where the constant $C$ now also depends on $\wt\gamma$.
\item[(ii)] \textbf{(Schauder Estimate)}
Furthermore, suppose in addition that for a given $\gamma \in (0,1)$, the coefficients are uniformly H\"{o}lder continuous satisfying
\[
\|a^{ij}\|_{C^\gamma(V)}+\|b^i\|_{C^\gamma(V)}+\|c\|_{C^\gamma(V)}\le \lambda^{-1}.
\]
If $\partial \Omega_0 \cap B_{3/4}(0,1)$ is empty, then there exists a constant $C = C(n, \lambda, \alpha, \gamma)>0$ such that
    \begin{equation}\label{eq:classical schauder}
        \|u\|_{C^{2,\gamma}(\overline{V \cap Q_{1/2}(0,1,1)})} \le C \left( \|u\|_{L^\infty(V)} + \|g\|_{C^{2,\gamma}(W)} + \|\tilde{f}\|_{C^\gamma(V)} \right).
    \end{equation}
\end{enumerate}
\end{thm}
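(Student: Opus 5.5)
The statement is the classical theory in \cite{lieberman1996second}, transplanted to the region $V\subset\{x_n\ge 1/4\}$. So the plan is to reduce to a uniformly parabolic equation with bounded, appropriately regular coefficients, and then quote the standard interior and boundary estimates.

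\textbf{Reduction to a uniformly parabolic operator.} On $Q_{3/4}(0,1,1)$ we have $x_n\in(1/4,7/4)$. Hence the weight $x_n^\alpha$ lies between two positive constants depending only on $\alpha$, and it is smooth with bounded derivatives of all orders. Set
\[
\tilde a^{ij}:=x_n^\alpha a^{ij},\qquad \tilde b^i:=x_n^\alpha b^i .
\]
Then $u_t-\tilde a^{ij}u_{ij}-\tilde b^iu_i+cu=\tilde f$ is a uniformly parabolic equation on $V$. Its ellipticity constants are controlled by $\lambda$ and $\alpha$, and the bounds \eqref{eq:structure_conditions} pass to $\tilde a^{ij}$ and $\tilde b^i$ up to factors depending on $\alpha$. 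Multiplying a $C^\gamma$ function by the smooth weight keeps it in $C^\gamma$, with norm at most $C(\alpha)$ times the original. Moreover, the cylinder $Q_{3/4}(0,1,1)$ has time length $(3/4)^{2-\alpha}$, comparable to $1$, so in the standard parabolic scaling it contains a standard cylinder of comparable size around each point of $Q_{1/2}(0,1,1)$. The separation between $Q_{1/2}$ and $Q_{3/4}$ is therefore a fixed positive quantity depending only on $\alpha$.

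\textbf{Part (i).} For bounded measurable coefficients I will invoke the Krylov--Safonov Hölder estimate in nondivergence form, which holds in the interior and up to the lateral boundary. At points near $W$ I use the boundary version for Lipschitz domains, or domains satisfying an exterior cone condition, with Hölder data $g$; this is Lieberman's Chapter VII. A finite covering of $\overline{V\cap Q_{1/2}}$ by small cylinders of uniform size then gives \eqref{eq:classical holder} with some $\gamma_0(n,\lambda,\alpha)$. If the $a^{ij}$ are $C^{\tilde\gamma}$, then the $\tilde a^{ij}$ are too. In that case I freeze the coefficients and use the $W^{2,p}$ or $C^{1,\tilde\gamma}$-type estimates, applied locally at the boundary points that are Lipschitz. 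This raises the Hölder exponent to any $\tilde\gamma<1$.

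\textbf{Part (ii).} Here $\partial\Omega_0\cap B_{3/4}(0,1)=\emptyset$, so $W$ consists only of bottom-time pieces, or is empty. The classical interior Schauder estimate (Lieberman, Theorem 4.9), or its version including the initial-time boundary when $W$ is nonempty, applied to the transformed operator with $C^\gamma$ coefficients, yields
\[
\|u\|_{C^{2,\gamma}}\le C\bigl(\|u\|_{L^\infty}+\|g\|_{C^{2,\gamma}}+\|\tilde f\|_{C^\gamma}\bigr)
\]
on the smaller set. A covering argument then finishes the proof.

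The only delicate point is bookkeeping. I must verify that the geometry of $V$ near $W$ (Lipschitz with constant $\lambda^{-1}$) together with the fixed distance between $Q_{1/2}$ and $\partial_pQ_{3/4}$ makes all constants depend only on $n,\lambda,\alpha$ (and $\gamma$). Since this is a citation of standard results, I expect no genuine obstacle.
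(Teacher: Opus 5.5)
Your reduction is the right one, and it is all the paper relies on: the statement is only cited from Lieberman, with no argument. On $Q_{3/4}(0,1,1)$ one has $1/4<x_n<7/4$, so $x_n^\alpha a^{ij}$ and $x_n^\alpha b^i$ define a uniformly parabolic operator with controlled bounds. The first assertion of (i) and all of (ii) then follow from Krylov--Safonov and Schauder theory as you describe.

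\textbf{The gap.} The step that fails is the upgrade at the end of (i) to an arbitrary exponent $\tilde\gamma<1$ near lateral boundary points where $\partial\Omega_0$ is only Lipschitz. There are no boundary $W^{2,p}$ or $C^{1,\mu}$ estimates at such points into which you could freeze coefficients; those require $C^{1,1}$, respectively $C^{1,\mu}$, boundaries. In fact the conclusion itself is false there for $n\ge 2$. Here is a counterexample.
\begin{itemize}
\item Take $\Omega_0=\{x_n>1-\lambda^{-1}|x_1|\}$. It is a Lipschitz graph domain with constant $\lambda^{-1}$, and in the $(x_1,x_n)$-plane it has a re-entrant corner at $e_n$ of opening $\theta=\pi+2\arctan(\lambda^{-1})\ge 3\pi/2$.
\item Let $u=\rho^{\pi/\theta}\sin(\pi\phi/\theta)$ in polar coordinates $(\rho,\phi)$ of that plane centred at $e_n$, with $\phi$ measured from one edge, and extend $u$ trivially in $x_2,\dots,x_{n-1}$.
\item Choose $t_0\le 1-(3/4)^{2-\alpha}$, so that $W$ is purely lateral.
\end{itemize}
Then $u$ is time independent and harmonic. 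Hence it solves the equation with $a^{ij}=\delta_{ij}$, $b^i=c=\tilde f=0$ and $g=u|_W\equiv 0$. But along the bisector $u=\rho^{\pi/\theta}$, so $u\notin C^{\mu}$ near the vertex for every $\mu>\pi/\theta$, and $\pi/\theta\le 2/3$. Near a Lipschitz lateral boundary the attainable exponent is capped in terms of the Lipschitz constant. No argument can deliver every $\tilde\gamma<1$ there, and the statement itself needs a stronger hypothesis for that clause.

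\textbf{What can be proved, and what the paper needs.} The upgrade does hold in the following settings.
\begin{itemize}
\item In the interior, by interior $W^{2,p}$ estimates for continuous coefficients.
\item Up to the initial slice, for instance via the local barrier $(|x-x_0|^2+K(t-t_0))^{\tilde\gamma/2}$, which is a supersolution near $(x_0,t_0)$ for $K$ large.
\item Up to lateral boundary portions that are $C^{1,1}$, via exterior-ball barriers.
\end{itemize}
This restricted version is all the paper uses. In Theorem~\ref{thm:holder estimates for general coefficients}, the rescaled cylinders $Q_{3/4}(0,1,1)\subset\{y_n>1/4\}$ meet only the initial slice and a piece of the rescaled sphere $\partial B_1$. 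You should therefore state and prove the improved-exponent clause of (i) under that hypothesis, not for Lipschitz boundaries.
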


\section{Boundary H\"{o}lder regularity}\label{sec: boundary holder regularity}

In this section, assuming that the structure conditions \eqref{eq:structure_conditions} hold and that $g \in C(\overline{Q}_1^+)$ is a continuous function satisfying
\begin{equation}\label{eq:structure_conditions g}
g = 0 \quad \text{on} \quad \partial_p Q_1^+ \cap \{x_n=0\},
\end{equation}
we study the initial-boundary value problem
\begin{equation}\label{eq:equation v0 bcg}
\begin{cases}
v_t - x_n^\alpha \bigl(a^{ij}v_{ij} + b^i v_i\bigr) + cv = x_n^\beta f & \text{in} \quad Q_1^+, \\
v = g & \text{on} \quad \partial_p Q_1^+,
\end{cases}
\end{equation}
where $\alpha\in(1,2)$ and $\beta\in[0,\infty)$ are fixed parameters.

For later discussions in subsequent sections, we introduce a regularization parameter $\varepsilon \in [0,1]$ and consider the classical solution to the strictly parabolic approximation:
\begin{equation}\label{eq:equation approximate bcg}
\begin{cases}
L_{\varepsilon} v_{\varepsilon} = (x_n+\varepsilon)^\beta f & \text{in} \quad Q_1^+, \\
v_\varepsilon = g & \text{on} \quad \partial_p Q_1^+,
\end{cases}
\end{equation}
where
\[
L_{\varepsilon} u := u_t - (x_n+\varepsilon)^\alpha \bigl(a^{ij}u_{ij}+b^iu_i\bigr) + cu.
\]
Here, we shall denote $v := v_0$ as the classical solution to the original limit problem \eqref{eq:equation v0 bcg}.

Our goal is to demonstrate how the global continuity or H\"{o}lder regularity of the solution $v$ in $\overline{Q}_1^+$ is dictated by the regularity of the boundary data $g$---specifically, whether it is merely continuous or possesses higher H\"{o}lder regularity---along with the influence of $\beta$.

Note that for any $\varepsilon \in [0,1]$, by comparing $v_\varepsilon$ with the spatially independent barriers
\[
W^\pm(t) = \pm e^{\Lambda t} \left( \|g\|_{L^\infty(\partial_p Q_1^+)} + \|f\|_{L^\infty(Q_1^+)} \right),
\]
where the constant $\Lambda > 0$ is chosen sufficiently large, depending on $\|c\|_{L^\infty}$, and applying the maximum principle, we obtain
\[
\| v_\varepsilon \|_{L^\infty(Q_1^+)} \leq C\left( \|g\|_{L^\infty(\partial_p Q_1^+)} + \|f\|_{L^\infty(Q_1^+)} \right).
\]
Here, $C > 0$ is a constant independent of $\varepsilon \in [0,1]$ and $\beta \in [0,\infty)$.

Thus, without loss of generality, we may always assume the normalization
\begin{equation}\label{eq:normalizationg}
\| v_\varepsilon \|_{L^\infty(Q_1^+)} + \|g\|_{L^\infty(\partial_p Q_1^+)} + \|f\|_{L^\infty(Q_1^+)} \leq 1.
\end{equation}
Otherwise, we can simply normalize the problem by dividing the equation by the constant factor $C\left(\|g\|_{L^\infty(\partial_p Q_1^+)} + \|f\|_{L^\infty(Q_1^+)} + 1\right)$.

Before stating the main estimates, we introduce two quantities to characterize the uniform modulus of continuity of the boundary data $g$ near the degenerate boundary $\{x_n = 0\}$. We define the interior modulus of continuity as
\begin{equation}\label{eq:def_omega}
\kappa(s) := \|g\|_{L^\infty(\partial_p Q_1^+ \cap \{0 < x_n < s\})},
\end{equation}
and the corresponding modulus of continuity on the initial time slice as
\begin{equation}\label{eq:def_omega_0}
\kappa_0(s) := \|g(x,0)\|_{L^\infty(\partial_p Q_1^+ \cap \{0 < x_n < s\} \cap \{t=0\})}.
\end{equation}

\begin{thm}\label{thm:Existence and uniqueness}
Suppose that $\alpha\in(1,2)$ and $\beta\in[0,\infty)$. Assume further that the structural conditions \eqref{eq:structure_conditions} hold in $Q_1^+$, that the condition on $g$ in \eqref{eq:structure_conditions g} holds, and that the normalization \eqref{eq:normalizationg} holds. For each $\varepsilon\in[0,1]$, let $v_\varepsilon$ be the solution of \eqref{eq:equation approximate bcg}. Then there exists a constant $C>0$, depending only on $n$, $\alpha$, $\beta$, and $\lambda$, such that:
\begin{enumerate}
    \item[i)] when $\beta < \alpha-1$, we have
    \begin{equation} \label{eq:c0 estimate for vva}
    |v_\varepsilon(x,t)| \leq C\inf_{\tau>0} \left( \kappa(\tau) + \left(\sup_{ \delta> \tau } \frac{\kappa(\delta) }{\delta^{2-\alpha+\beta}} + 1\right) x_n^{2-\alpha+\beta} \right)
    \end{equation}
    in  $Q_1^+$ and 
    \begin{equation}\label{eq:c0 estimate for vva x'}
    |v_\varepsilon(x,t)| \leq C\inf_{\tau>0} \left( \kappa_0(\tau) + \left(\sup_{ \delta> \tau } \frac{\kappa_0(\delta) }{\delta^{2-\alpha+\beta}} + 1\right) x_n^{2-\alpha+\beta} \right)
    \end{equation}
    in  $Q_1^+\cap \{|x'|\leq 3/4\}$;
    \item[ii)] when $\beta > \alpha-1$, we have
    \begin{equation}\label{eq:c1 estimate for vva}
    |v_\varepsilon(x,t)| \leq C\inf_{\tau>0} \left( \kappa(\tau) + \left(\sup_{ \delta> \tau } \frac{\kappa(\delta) }{\delta} + 1\right) x_n \right)
    \end{equation}
     in  $Q_1^+$ and 
    \begin{equation}\label{eq:c1 estimate for vva x'}
    |v_\varepsilon(x,t)|  \leq C\inf_{\tau>0} \left( \kappa_0(\tau) + \left(\sup_{ \delta> \tau } \frac{\kappa_0(\delta) }{\delta} + 1\right) x_n \right) 
    \end{equation}
    in  $Q_1^+\cap \{|x'|\leq 3/4\}$.
\end{enumerate}
Here, $\kappa(s)$ and $\kappa_0(s)$ are defined as in \eqref{eq:def_omega} and \eqref{eq:def_omega_0}, respectively. 

As a corollary, the solution $v_\varepsilon$ of \eqref{eq:equation approximate bcg} converges uniformly in $\overline{Q}_1^+$ to a unique classical solution $v:=v_0 \in C_{x,t}^{2,1}(Q_1^+) \cap C(\overline{Q}_1^+)$ of \eqref{eq:equation v0 bcg} as $\varepsilon \to 0^+$.
\end{thm}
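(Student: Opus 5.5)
The plan is to prove the four bounds by barrier arguments and the comparison principle for $L_\varepsilon$, and then to obtain the limit $v$ by compactness. The constants must be uniform in $\varepsilon\in[0,1]$, so every barrier will be built with $x_n+\varepsilon$ playing the role of $x_n$. For a fixed scale $\tau\in(0,1)$ set
\[
K_\tau:=\sup_{\delta>\tau}\frac{\kappa(\delta)}{\delta^{\theta}}+1,\qquad \theta:=\min\{2-\alpha+\beta,\,1\}.
\]
Case i) corresponds to $\theta=2-\alpha+\beta<1$, and case ii) to $\theta=1$.

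\textbf{The barrier.} Take
\[
W(x,t)=\kappa(\tau)+A K_\tau\,\phi(x_n)+\text{(lateral correction)}.
\]
In case i) the natural profile is $\phi(x_n)=(x_n+\varepsilon)^{\theta}-\varepsilon^{\theta}$. A direct computation gives
\[
-(x_n+\varepsilon)^\alpha a^{nn}\phi''=\theta(1-\theta)a^{nn}(x_n+\varepsilon)^{\beta},
\]
which dominates the source $(x_n+\varepsilon)^\beta f$ once $A$ is large. The drift term $(x_n+\varepsilon)^\alpha b^n\phi'\sim (x_n+\varepsilon)^{\alpha-1+\theta}=(x_n+\varepsilon)^{1+\beta}$ is of higher order, so it is absorbed near $x_n=0$. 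Away from the boundary we are in the uniformly parabolic regime, and there we add a term $e^{\Lambda t}$ to handle $c$ and the bounded region.

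In case ii) the exponent $2-\alpha+\beta$ exceeds $1$, so we use $\phi(x_n)=x_n-\mu (x_n+\varepsilon)^{1+\eta}$ with small $\eta>0$ chosen so that $\alpha-1+\eta<\beta$. Then $-(x_n+\varepsilon)^\alpha\phi''\gtrsim (x_n+\varepsilon)^{\alpha-1+\eta}\ge (x_n+\varepsilon)^\beta$, and $\phi\approx x_n$ stays positive on $x_n\le 1$ for $\mu$ small. The drift $b^n\phi'$ contributes $(x_n+\varepsilon)^\alpha$, which is smaller than $(x_n+\varepsilon)^{\alpha-1+\eta}$.

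\textbf{Boundary check.} On $\partial_pQ_1^+$ we must show $W\ge |g|$. Where $x_n<\tau$ this holds because $|g|\le\kappa(\tau)$. Where $x_n=\delta\ge\tau$ it holds because $|g|\le\kappa(\delta)\le K_\tau\delta^\theta\lesssim K_\tau\phi(\delta)$. Here $\phi(\delta)\gtrsim\delta^\theta$ only for $\delta\gtrsim\varepsilon$; for smaller $\delta$ we instead use $\phi(\delta)\ge c\,\delta\,\varepsilon^{\theta-1}\ge c\,\delta^\theta$, which again holds since $\theta\le1$.

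\textbf{Lateral part of $\partial_pQ_1^+$.} On $\{|x'|=1\}$ the data $g$ are controlled by $\kappa$ directly, since $\kappa$ is the sup of $|g|$ over the whole parabolic boundary strip; so the global bound \eqref{eq:c0 estimate for vva} needs no extra lateral term. The interior estimates \eqref{eq:c0 estimate for vva x'} and \eqref{eq:c1 estimate for vva x'} use only $\kappa_0$, so there the lateral data must be dominated by an auxiliary barrier. I would take $M(|x'|^2-\tfrac{9}{16})_+$-type quadratic growth in $x'$, made positive where $|x'|\ge 3/4$ and combined with a factor like $\phi(x_n)/x_n^{\theta}$ or simply $(x_n+\varepsilon)^{\theta}$. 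The point is that the resulting tangential second derivatives, multiplied by $(x_n+\varepsilon)^\alpha$, are $O((x_n+\varepsilon)^\alpha)$, which is negligible against $(x_n+\varepsilon)^\beta$ (case i), since $\beta<\alpha-1<\alpha$) or against $(x_n+\varepsilon)^{\alpha-1+\eta}$ (case ii).

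\textbf{Main obstacle.} I expect this lateral localization to be the hardest step. A pure function $h(x')$ does not vanish at $x_n=0$, so I would use $(x_n+\varepsilon)^{\theta}h(x')$ together with the normalization $|v_\varepsilon|\le1$ to cover the region near $|x'|=1$. The mixed terms $a^{in}$ then have to be controlled as well, and I would check this via Cauchy--Schwarz with the dominant $\phi''$ term. The same comparison applied to $-v_\varepsilon$ gives the two-sided bounds, and taking the infimum over $\tau$ finishes the estimates.

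\textbf{Passage to the limit.} The bounds give an equicontinuity modulus at $\{x_n=0\}$ that is uniform in $\varepsilon$: given $\eta>0$, pick $\tau$ with $\kappa(\tau)<\eta$, and then take $x_n$ small. In the interior, Theorem \ref{thm:standard_interior_boundary_estimates}(i), applied in rescaled cylinders with $x_n\ge\rho$, gives Hölder bounds up to the lateral and initial boundary, since $g$ is continuous. Arzelà--Ascoli then produces uniform limits along subsequences. The interior Schauder estimate (ii) upgrades the limit to a $C^{2,1}$ solution of \eqref{eq:equation v0 bcg}, and the bounds at $\varepsilon=0$ show it attains the zero data on $\{x_n=0\}$.

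\textbf{Uniqueness.} For uniqueness, let $w$ be the difference of two solutions; it vanishes on $\partial_pQ_1^+$. Compare $w$ with $\eta+e^{\Lambda t}\eta\,(\text{small barrier})$ on $\{x_n>\delta\}$, using that $w\to0$ uniformly as $x_n\to0$. This gives $w\equiv0$, so the whole family $v_\varepsilon$ converges.
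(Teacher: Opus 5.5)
\textbf{Gap: the barrier profiles.} Your barrier construction fails as written, and the cause is the decision to replace $x_n$ by $x_n+\varepsilon$ in the profiles.

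In case i), for $\delta<\varepsilon$ you have $\phi(\delta)=(\delta+\varepsilon)^\theta-\varepsilon^\theta\le\theta\varepsilon^{\theta-1}\delta=\theta\,\delta^\theta(\delta/\varepsilon)^{1-\theta}$. So your inequality $\delta\varepsilon^{\theta-1}\ge\delta^\theta$ is reversed. For $\tau<\delta\ll\varepsilon$ the boundary check $AK_\tau\phi(\delta)\ge K_\tau\delta^\theta$ fails unless $A\gtrsim(\varepsilon/\delta)^{1-\theta}$. In fact the bound you would be proving, $|v_\varepsilon|\le\kappa(\tau)+AK_\tau\phi(x_n)$, is already false on the initial slice when $g(\cdot,0)\approx x_n^\theta$ there.

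In case ii), $\phi(0)=-\mu\varepsilon^{1+\eta}<0$. Hence $W<0=v_\varepsilon$ on $\{x_n=0\}$ as soon as $\kappa(\tau)<AK_\tau\mu\varepsilon^{1+\eta}$.

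In both cases the good term is only $(x_n+\varepsilon)^\beta$, resp. $\mu(x_n+\varepsilon)^{\alpha-1+\eta}$. Absorbing the drift then needs $x_n+\varepsilon$ small, which fails for $\varepsilon$ of order one, so your constants are not uniform in $\varepsilon\in[0,1]$.

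The missing observation is that no regularization is needed. Take $w=x_n^{2-\alpha+\beta}$ in case i), and $\widetilde w=x_n-x_n^{1+\widetilde\beta}$ with $0<\widetilde\beta<\min\{\beta-\alpha+1,1\}$ in case ii). The factor $(x_n+\varepsilon)^\alpha$ then multiplies the singular $x_n^{\theta-2}$, resp. $x_n^{\widetilde\beta-1}$. The principal term is therefore $\gtrsim(x_n+\varepsilon)^\alpha x_n^{\beta-\alpha}\ge(x_n+\varepsilon)^\beta$, resp. $\gtrsim(x_n+\varepsilon)^{\alpha+\widetilde\beta-1}\gtrsim(x_n+\varepsilon)^\beta$. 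The drift is smaller by a factor $x_n$, resp. $x_n^{1-\widetilde\beta}$, uniformly in $\varepsilon$. The boundary check at height $\delta\ge\tau$ is now exact. The non-smoothness of $x_n^\theta$ at $x_n=0$ does not matter, because comparison only requires continuity on the closure. This is the paper's argument: it works in $\{0<x_n<r_0\}$, uses $Ar_0^\theta\ge 1\ge|v_\varepsilon|$ on $\{x_n=r_0\}$, and includes a factor $e^{\Lambda t}$ with $\Lambda=\|c\|_{L^\infty}$.

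\textbf{Gap: the lateral localization for the $\kappa_0$-bounds.} Multiplying $h(x')$ by $(x_n+\varepsilon)^\theta$ makes the correction vanish, for $\varepsilon=0$, at the corner $\partial B_1\cap\{x_n=0\}$. The lateral data there are merely continuous, with a modulus the $\kappa_0$-estimate is not allowed to depend on. Near the corner $W$ is then much smaller than $1$, and the normalization $|g|\le 1$ does not restore $W\ge|g|$.

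Your stated reason for the extra factor is not a real obstruction. On $\{x_n=0\}$ you only need $W\ge g=0$, and the estimate is read off at points where the correction vanishes. The paper uses a pure quadratic, a fixed multiple of $\psi=|x'-y'|^2$ with $|y'|$ bounded away from $1$, and evaluates at $x'=y'$. This term is $\ge 1$ on the lateral boundary inside $\{x_n<r_0\}$. Since $|L_\varepsilon\psi-c\psi|\le C(x_n+\varepsilon)^\alpha$, it is absorbed by $A(x_n+\varepsilon)^\alpha x_n^{\beta-\alpha}$, resp. $A(x_n+\varepsilon)^\alpha x_n^{\widetilde\beta-1}$, with no mixed-term analysis needed.

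Your compactness and uniqueness arguments are fine.
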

\begin{proof}
Recalling \eqref{eq:normalizationg}, it suffices to establish both estimates in the boundary neighborhood $\{0 < x_n < r_0\}$ for some sufficiently small $r_0 \in (0,1)$.

\textbf{Step 1.} 
We first assume $\beta < \alpha-1$ and prove \eqref{eq:c0 estimate for vva} and \eqref{eq:c0 estimate for vva x'}. 

For each small $\tau>0$, we consider the upper barriers
\[
\Phi_{\tau}^+(x,t) = e^{\Lambda t} \Big[ M_{\tau} Aw(x) +\kappa(\tau)\Big], 
\]
where 
\[
\Lambda=\|c\|_{L^{\infty}(Q_1^+)},\quad w(x)=x_n^{2-\alpha+\beta}\quad \text{and} \quad  M_{\tau}= \sup_{ \delta> \tau }  \frac{\kappa(\delta) }{\delta^{2-\alpha+\beta}} +1.
\]
Then, we have
\[
L_{\varepsilon} \Phi_{\tau}^+ = 
e^{\Lambda t}  M_{\tau}  A\bigl( L_{\varepsilon} w -cw \bigr) +(\Lambda + c)\Phi_{\tau}^+\geq e^{\Lambda t}  M_{\tau}A \bigl( L_{\varepsilon} w - cw \bigr).
\]
Evaluating the principal part and the gradient drift terms on $w$, we compute
\[
L_{\varepsilon} w - cw= c_{\alpha,\beta}a^{nn}(x_n+\varepsilon)^{\alpha}x_n^{-\alpha+\beta}  - (x_n+\varepsilon)^\alpha b^n (2-\alpha+\beta)x_n^{1-\alpha+\beta},
\]
where $c_{\alpha,\beta}=-(2-\alpha+\beta)(1-\alpha+\beta)$.
Since $0<2-\alpha+\beta<1$, by choosing $A$ sufficiently large and $r_0$ sufficiently small, we obtain
\[
A \bigl(L_{\varepsilon} w - cw\bigr)\geq  (x_n+\varepsilon)^\beta f \quad \text{in } Q_1^+\cap \{0 < x_n < r_0\}.
\]
Consequently,
\[
L_{\varepsilon} \Phi_{\tau}^+ \geq e^{\Lambda t}M_{\tau} (x_n+\varepsilon)^\beta  f \geq (x_n+\varepsilon)^\beta  f = L_{\varepsilon} v_\varepsilon \quad \text{in }   Q_1^+\cap \{0 < x_n < r_0\}.
\]

Note that for sufficiently large $A$, we have $\Phi_{\tau}^+ \geq A r_0^{2-\alpha+\beta} \geq v_\varepsilon$ on $\{x_n = r_0\}$, and
\[
\Phi_{\tau}^+ \geq \kappa(\tau)  \geq \kappa(x_n) \geq g = v_\varepsilon \quad \text{on } \partial_p Q_1^+ \cap \{0 \leq x_n < r_0\}.
\]
The comparison principle then yields
\[
v_\varepsilon(x,t) \leq \Phi_{\tau}^+(x,t) \quad \text{in } Q_1^+ \cap \{0 < x_n < r_0\}.
\] 
By symmetry, $v_\varepsilon(x,t) \geq -\Phi_{\tau}^+(x,t)$ holds as well. In conclusion,
\[
|v_\varepsilon(x,t)| \leq e^{\Lambda t} \inf_{\tau>0} \left( \kappa(\tau) + M_{\tau}A  x_n^{2-\alpha+\beta}    \right) \quad \text{in } Q_1^+\cap \{0 < x_n < r_0\}.
\]
This establishes \eqref{eq:c0 estimate for vva}.

Moreover, noting that $L_{\varepsilon} |x' - y'|^2- c|x' - y'|^2 \leq C(x_n+\varepsilon)^{\alpha}$, the same argument also shows that for every $(y',0) \in \pa B_{1/2}^+$, we have
\[
|v_\varepsilon(x,t)| \leq e^{\Lambda t} \inf_{\tau>0} \left(\kappa(\tau) + M_{\tau,0}A  x_n^{2-\alpha+\beta}  +16|x' - y'|^2  \right),
\]
where $M_{\tau,0}=\sup_{ \delta> \tau } \frac{\kappa_0(\delta) }{\delta^{2-\alpha+\beta}} + 1$. This establishes \eqref{eq:c0 estimate for vva x'}.

\textbf{Step 2.} 
We assume $\beta > \alpha-1$ and prove \eqref{eq:c1 estimate for vva} and \eqref{eq:c1 estimate for vva x'}. 
Let $0<\widetilde{\beta}<\min\{\beta-\alpha+1,1\}$ be a small constant. 
The proof for \eqref{eq:c1 estimate for vva} follows an identical argument by replacing the function with
\[
\widetilde{\Phi}_{\tau}^+(x,t) = e^{\Lambda t} \Big[\widetilde{M}_{\tau} A\widetilde{w}(x) + \kappa(\tau)\Big], 
\]
where 
\[
\Lambda=\|c\|_{L^{\infty}(Q_1^+)},\quad \widetilde{w}=x_n
-x_n^{1+\widetilde{\beta}} \quad \text{and} \quad  \widetilde{M}_{\tau}= \sup_{ \delta> \tau }  \frac{\kappa(\delta) }{\delta} +1.
\]
Specifically, observing that by choosing $A$ sufficiently large and $r_0$ sufficiently small, we have
\[
\begin{split}
L_{\varepsilon} \widetilde{w}- c\widetilde{w}
&= \widetilde{\beta}(1+\widetilde{\beta})a^{nn}(x_n+\varepsilon)^{\alpha}x_n^{\widetilde{\beta}-1}  + (x_n+\varepsilon)^\alpha b^n((1+\widetilde{\beta}) x_n^{\widetilde{\beta}}-1), \\
&\geq  A^{-1}(x_n+\varepsilon)^\beta f \quad \text{in } Q_1^+\cap \{0 < x_n < r_0\}.
\end{split}
\]
The same comparison argument then yields that $|v_\varepsilon(x,t)| \leq \widetilde{\Phi}_{\tau}^+(x,t)$ in $Q_1^+ \cap \{0 < x_n < r_0\}$. Extending this to the entire cylinder $Q_1^+$ as done previously, and taking the infimum over $\tau > 0$, establishes the estimate \eqref{eq:c1 estimate for vva}.

Similarly, noting that $L_{\varepsilon} |x' - y'|^2- c|x' - y'|^2 \leq C(x_n+\varepsilon)^{\alpha}$, the same argument also shows that for every $(y',0) \in\pa B_{1/2}^+$, we have
\[
|v_\varepsilon(x,t)| \leq e^{\Lambda t} \inf_{\tau>0} \left( \kappa(\tau) + \widetilde{M}_{\tau,0}A  x_n  +16|x' - y'|^2  \right),
\]
where $\widetilde{M}_{\tau,0}=\sup_{ \delta> \tau } \frac{\kappa_0(\delta) }{\delta} + 1$. This establishes \eqref{eq:c1 estimate for vva x'}.

\textbf{Step 3.} We show the existence and uniqueness of the solution $v$ to \eqref{eq:equation v0 bcg}, and the convergence $v_\varepsilon \to v$.

Let $\beta=0$, by taking $\tau = x_n^{1/2}$ and noting that $\sup_{\delta > \tau} \frac{\kappa(\delta)}{\delta^{2-\alpha}} \leq x_n^{\frac{\alpha}{2}-1}$, we then obtain
\[
|v_\varepsilon(x,t)| \leq C\left( \kappa(x_n^{1/2}) + A x_n^{1-\frac{\alpha}{2}} \right) \quad \text{in } Q_1^+.
\]
Since $1 < \alpha < 2$, the right-hand side decays to $0$ uniformly as $x_n \to 0^+$. This establishes a uniform oscillation bound for $v_\varepsilon$ near the degenerate boundary $\{x_n=0\}$ that is strictly independent of $\varepsilon$. 
Away from the degenerate boundary $\{x_n=0\}$, the regularized operator $L_{\varepsilon}$ is strictly and uniformly parabolic. Therefore, standard parabolic theory applies, granting the family $\{v_\varepsilon\}$ a uniform modulus of continuity. By the Arzelà-Ascoli theorem, as $\varepsilon \to 0^+$, we can extract a subsequence that converges uniformly in $\overline{Q}_1^+$ to a continuous limit function $v$, which solves \eqref{eq:equation v0 bcg}. By the maximum principle, such a limit is unique, ensuring that the entire family $v_\varepsilon$ converges to $v$. This proves both the existence of the solution and the uniform convergence of the approximation.
\end{proof}

\begin{cor}\label{coro:boundary holder}
Assume all the assumptions in Theorem \ref{thm:Existence and uniqueness}. Let $\sigma^* = 2 - \alpha + \beta$ if $2 - \alpha + \beta < 1$, and $\sigma^* = 1$ if $2 - \alpha + \beta > 1$. 
\begin{enumerate}
\item[i)] Assume that $|g(x,t)| \leq K x_n^{\sigma}$ for some $\sigma \in (0, \sigma^*]$ and $K>1$. Then we have
\[
|v_{\varepsilon}(x,t)| \leq C K x_n^{\sigma} \quad \text{in } Q_1^+.
\]
\item[ii)] Assume that $|g(x,0)| \leq K x_n^{\sigma}$ for some $\sigma \in (0, \sigma^*]$ and $K>1$. Then we have
\[
|v_{\varepsilon}(x,t)| \leq C K x_n^{\sigma} \quad \text{in } Q_1^+ \cap \{|x'| \leq 3/4\}.
\]
\end{enumerate}
\end{cor}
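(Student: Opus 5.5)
The plan is to plug the hypothesis on $g$ into the estimates of Theorem~\ref{thm:Existence and uniqueness} and choose $\tau$ as a fixed multiple of $x_n$. Under the assumption of part i), we have $\kappa(s)\le K s^{\sigma}$ for all $s>0$, since $g$ is evaluated only at points with $x_n<s$. If $\sigma^*=2-\alpha+\beta<1$ (so $\beta<\alpha-1$), we use \eqref{eq:c0 estimate for vva}. Because $\sigma\le 2-\alpha+\beta$, the function $\delta\mapsto \delta^{\sigma-(2-\alpha+\beta)}$ is nonincreasing, so
\[
\sup_{\delta>\tau}\frac{\kappa(\delta)}{\delta^{2-\alpha+\beta}}\le K\tau^{\sigma-(2-\alpha+\beta)}.
\]
Choosing $\tau=x_n$ then gives
\[
|v_\varepsilon|\le C\bigl(Kx_n^{\sigma}+Kx_n^{\sigma}+x_n^{2-\alpha+\beta}\bigr)\le CKx_n^{\sigma},
\]
because $x_n\le 1$, $\sigma\le 2-\alpha+\beta$ and $K>1$. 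In the case $\beta>\alpha-1$ we argue identically with \eqref{eq:c1 estimate for vva}, using $\sigma\le1$ so that $\sup_{\delta>\tau}\kappa(\delta)/\delta\le K\tau^{\sigma-1}$, and again take $\tau=x_n$.

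Part ii) follows the same way from \eqref{eq:c0 estimate for vva x'} and \eqref{eq:c1 estimate for vva x'}, with $\kappa_0(s)\le Ks^{\sigma}$; this gives the bound in $Q_1^+\cap\{|x'|\le3/4\}$.

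The only points needing care are the following. First, the estimates of Theorem~\ref{thm:Existence and uniqueness} hold for every $\tau>0$, so taking $\tau=x_n$ pointwise is legitimate. Second, the excluded borderline $\beta=\alpha-1$ does not occur, since $\sigma^*$ is defined only in the two cases $2-\alpha+\beta<1$ and $2-\alpha+\beta>1$. Third, we must check that the constant is independent of $\varepsilon$, which it is because the constant in the theorem is. There is no real obstacle beyond confirming the monotonicity used in the supremum bound.
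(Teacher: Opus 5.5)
Your proposal is correct and matches the paper's proof: you bound $\kappa$ (resp.\ $\kappa_0$) by $Ks^{\sigma}$, use that $\delta\mapsto\delta^{\sigma-\sigma^*}$ is nonincreasing to get $\sup_{\delta>\tau}\kappa(\delta)/\delta^{\sigma^*}\le K\tau^{\sigma-\sigma^*}$, and set $\tau=x_n$ in the appropriate estimate of Theorem~\ref{thm:Existence and uniqueness}. Your absorption of the leftover $x_n^{\sigma^*}$ term, using $x_n\le 1$, $\sigma\le\sigma^*$ and $K>1$, supplies a step the paper leaves implicit.
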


\begin{proof}
Observe that for any $\sigma \leq \sigma^*$, we have
\[
\sup_{\delta > \tau} \delta^{\sigma - \sigma^*} = \tau^{\sigma - \sigma^*}.
\]
Utilizing this property and setting $\tau = x_n$ in \eqref{eq:c0 estimate for vva} when $2 - \alpha + \beta < 1$, and in \eqref{eq:c1 estimate for vva} when $2 - \alpha + \beta \geq 1$, we obtain the first estimate. The second estimate follows analogously from \eqref{eq:c0 estimate for vva x'} and \eqref{eq:c1 estimate for vva x'}. This completes the proof. 
\end{proof}

Next, assuming $\beta>\alpha-1$, we establish the local Lipschitz regularity estimates near the degenerate boundary in a smaller cylinder. The following theorem summarizes the optimal linear growth, the local H\"{o}lder regularity, and the gradient estimates for the solution. While we state these results for the standard nested cylinders $Q_{1/2}^+$, $Q_{3/4}^+$, and $Q_1^+$ to maintain a clean and readable presentation, they can be readily generalized to any concentric sub-cylinders $Q_r^+ \subset Q_R^+$ up to the degenerate boundary via standard scaling and covering arguments.

\begin{thm}\label{thm:local boundary regularity}
Assume all the assumptions in Theorem \ref{thm:Existence and uniqueness}. Let us assume that $\alpha \in (1,2)$ and $\beta \in (\alpha-1,\infty)$.
Then, there exists a constant $C > 0$, depending only on $n, \alpha, \beta$, and $\lambda$, such that
\[
|v_{\varepsilon}(x,t)| \leq C \left( \|g\|_{L^\infty(\partial_p Q_1^+)} + \|f\|_{L^\infty(Q_1^+)} \right) x_n\quad \text{in } Q_{1/2}^+.
\]    
\end{thm}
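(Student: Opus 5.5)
The plan is to sharpen the barrier argument of Step 2 in Theorem \ref{thm:Existence and uniqueness} so that the moduli $\kappa$ and $\kappa_0$ no longer appear. After the normalization \eqref{eq:normalizationg} it suffices to prove $|v_\varepsilon|\le Cx_n$ in $Q_{1/2}^+\cap\{0<x_n<r_0\}$ for a small $r_0$. The only boundary parts where $g$ is not already $O(x_n)$ are the portions of $\partial_pQ_1^+$ near the corner $\{x_n=0\}$:
\begin{itemize}
\item the lateral piece $\{|x'|=1,\ 0<x_n<r_0\}$, where I only know $|g|\le 1$;
\item the initial slice $\{t=0,\ 0<x_n<r_0\}$, where again I only know $|g|\le 1$.
\end{itemize}
Since $Q_{1/2}^+$ stays a positive distance away from both pieces, I will build one barrier made of three parts, one for each difficulty.

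\textbf{Part 1 (source and the flat boundary).} Fix $0<\widetilde\beta<\min\{\beta-\alpha+1,1\}$. As computed in Step 2 of Theorem \ref{thm:Existence and uniqueness}, $\widetilde w=x_n-x_n^{1+\widetilde\beta}$ satisfies $L_\varepsilon\widetilde w-c\widetilde w\ge A^{-1}(x_n+\varepsilon)^\beta|f|$ in $\{0<x_n<r_0\}$. This works because $\beta>\alpha-1$: the good term $(x_n+\varepsilon)^\alpha x_n^{\widetilde\beta-1}$ dominates both $(x_n+\varepsilon)^\beta$ and the drift term.

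\textbf{Part 2 (the lateral side).} For $|y'|\le 1/2$, the function $16|x'-y'|^2$ is at least $1$ on $\{|x'|=1\}$, vanishes at $x'=y'$, and its error $L_\varepsilon(\cdot)-c(\cdot)\ge -C(x_n+\varepsilon)^\alpha$. Part 1 absorbs this error once $A$ is enlarged, since $x_n^{\widetilde\beta-1}\gg 1$ for small $x_n$.

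\textbf{Part 3 (the initial slice), the main step.} I will use a self-similar supersolution of the 1D model $u_t-\mu x_n^\alpha u_{nn}$. With $\xi=x_n t^{-1/(2-\alpha)}$ and $F(\xi)=\int_0^\xi e^{-\eta^{2-\alpha}/(\mu(2-\alpha)^2)}\,d\eta$, the function $F(\xi)$ solves that equation exactly. Moreover $F$ is increasing, concave and bounded, with $F(0)=0$ and $F(\infty)=:F_\infty<\infty$, the last because $2-\alpha>0$. Set $W=F(\xi)/F_\infty$. Then:
\begin{itemize}
\item $W=1$ on $\{t=0,\ x_n>0\}$;
\item for $t\ge t_1:=1-2^{\alpha-2}$ we have $W\le F_\infty^{-1}t_1^{-1/(2-\alpha)}x_n$, which is linear growth.
\end{itemize}
The key structural facts are $F'>0$ and $F''<0$. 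Choosing $\mu\le\lambda$ gives $-(x_n+\varepsilon)^\alpha a^{nn}W_{nn}\ge -\mu x_n^\alpha W_{nn}$, and the $\varepsilon$-shift only helps because $W_{nn}\le 0$. The main obstacle is the drift $-(x_n+\varepsilon)^\alpha b^n W_n$, which has no sign. I plan to handle it in one of two ways:
\begin{itemize}
\item take $\mu$ slightly smaller than $\lambda$, so that a fraction of $|W_{nn}|$ is left over and compare $|W_{nn}|=\mu^{-1}(2-\alpha)^{-2}\xi^{1-\alpha}t^{-1/(2-\alpha)}W_n$ against $W_n$ on $\{x_n<r_0\}$, using $\xi^{1-\alpha}t^{-1/(2-\alpha)}=x_n^{1-\alpha}t^{(\alpha-2)/(2-\alpha)}\ge x_n^{1-\alpha}\ge r_0^{1-\alpha}$ for $t\le 1$, which is large since $\alpha>1$;
\item or, failing that, absorb it using Part 1 with a larger $A$, since $W_n\le Ct^{-1/(2-\alpha)}$ only blows up as $t\to0$ in a region where $\xi$ is large and $W_{nn}$ dominates.
\end{itemize}
The zeroth-order term $c$ is handled by the factor $e^{\Lambda t}$ exactly as before.

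\textbf{Comparison.} Set
\[
\Phi=e^{\Lambda t}\bigl[A\widetilde w+16|x'-y'|^2+W\bigr]
\]
on $Q_1^+\cap\{0<x_n<r_0\}$. By Parts 1 to 3, $L_\varepsilon\Phi\ge (x_n+\varepsilon)^\beta f$. On the boundary:
\begin{itemize}
\item $\Phi\ge 1\ge|g|$ on $\{t=0\}$ and on $\{|x'|=1\}$;
\item $\Phi\ge Ar_0/2\ge 1\ge |v_\varepsilon|$ on $\{x_n=r_0\}$;
\item $\Phi\ge0=g$ on $\{x_n=0\}$.
\end{itemize}
The comparison principle therefore gives $|v_\varepsilon|\le\Phi$. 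Evaluating at $x'=y'$ with $|y'|<1/2$ and $t\ge t_1$ yields $|v_\varepsilon(x,t)|\le C x_n$. Undoing the normalization by $\|g\|_{L^\infty(\partial_pQ_1^+)}+\|f\|_{L^\infty(Q_1^+)}$ gives the stated estimate in $Q_{1/2}^+$, with $C$ depending only on $n,\alpha,\beta,\lambda$ and independent of $\varepsilon$.
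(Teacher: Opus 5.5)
Your proposal is correct, and it takes a genuinely different route from the paper.

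\textbf{The paper's route.} The paper never builds a time-dependent barrier. It removes the initial data by passing to $(t-s)v_\varepsilon$, which vanishes at $t=s$, and then applies Corollary \ref{coro:boundary holder} ii). The cost is that the new source term $v_\varepsilon$ carries no factor $x_n^\beta$, so one application only gives $|v_\varepsilon|\le Cx_n^{2-\alpha}$. The paper therefore bootstraps through exponents $\beta_k=k\delta$, improving $x_n^{2-\alpha+\beta_k}$ to $x_n^{2-\alpha+\beta_{k+1}}$ on shrinking cylinders $Q^+_{2^{-k-1}}$. The number of steps grows like $(\alpha-1)/(\beta-\alpha+1)$ as $\beta\downarrow\alpha-1$, and the argument ends with scaling and covering.

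\textbf{Your route.} You absorb the initial layer directly with the self-similar solution $W=F(x_nt^{-1/(2-\alpha)})/F_\infty$ of $u_t=\mu x_n^\alpha u_{nn}$. I checked the drift step: with $\mu=\lambda/2$, $a^{nn}\ge\lambda$ and $W_{nn}\le 0$, one gets
\[
(L_\varepsilon-c)W\;\ge\;(x_n+\varepsilon)^\alpha W_n\Bigl(\frac{x_n^{1-\alpha}}{(2-\alpha)\,t}-\lambda^{-1}\Bigr)\;\ge\;0
\qquad\text{in }\{0<x_n<r_0,\ 0<t\le 1\}.
\]
So your first option works and the fallback is unnecessary. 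Since $F'\le 1$, you also have $W\le x_n t^{-1/(2-\alpha)}/F_\infty$, which gives linear growth for $t\ge t_1$.

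\textbf{Comparison.} Your approach yields the estimate in a single comparison, with explicit decay in $t$ and no iteration or covering. The paper's approach needs only the time-independent barriers already built in Theorem \ref{thm:Existence and uniqueness}.

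\textbf{Minor corrections.}
\begin{itemize}
\item The correct identity is $|W_{nn}|=\mu^{-1}(2-\alpha)^{-1}\xi^{1-\alpha}t^{-1/(2-\alpha)}W_n$, not $(2-\alpha)^{-2}$. This is harmless.
\item The lateral boundary of $Q_1^+$ is $\{|x|=1\}$, not $\{|x'|=1\}$. Inside $\{x_n<r_0\}$ it satisfies $|x'|\ge\sqrt{1-r_0^2}$, so $16|x'-y'|^2\ge 1$ still holds there.
\item $W$ jumps at the corner $\{x_n=0,\,t=0\}$. You should say that comparison is applied to the lower semicontinuous envelope of $\Phi$. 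This is fine because $\Phi\ge 0=g$ at that corner.
\end{itemize}
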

\begin{proof}
Let us assume \eqref{eq:normalizationg} holds. Note that for any $s\in R$, the function  $\tilde v_{\varepsilon}:=(t-s)v_{\varepsilon}$ satisfies
\[
L_{\varepsilon} \tilde v_{\varepsilon} = (t-s)(x_n+\varepsilon)^\beta f + v_{\varepsilon} \quad \text{in } Q_1^+
\]
with
\[
\tilde{v}_{\varepsilon} = 0 \quad \text{on } \{t = s\},
\]
which permits the application of Corollary \ref{coro:boundary holder}, case ii) for the case of $g(x,0)=0$.

For simplicity, we assume $\beta \in (\alpha-1,1]$. The case $\beta>1$ follows by the same argument, or by replacing $\beta$ with $1$ in the estimates since $x_n^\beta\le x_n$ in $Q_1^+$.
 Let
\[
m = \left\lfloor \frac{\alpha-1}{\beta-\alpha+1} \right\rfloor + 2, \quad \delta = \frac{\alpha-1}{m} \in (0, \beta-\alpha+1),
\]
and define the sequence of exponents
\[
\beta_k = k\delta \leq \alpha-1 <\beta, \quad k = 0, \dots, m.
\]
We proceed by induction on $k = 0, 1, \dots, m-1$ to show that
\[
v_{\varepsilon}(x,t) \leq C_k x_n^{2-\alpha+\beta_k} \quad \text{in } Q_{1/2^{k+1}}^+.
\]

For the base case $k = 0$, applying Corollary \ref{coro:boundary holder}, case ii), to $tv_{\va}$ with parameters $\beta =\beta_0= 0$ and $\sigma_0=2-\alpha$, we obtain from the fact $|t(x_n+\varepsilon)^\beta f + v_{\varepsilon} |\leq 2^2+1$ in $Q_1^+$ that
\[
|tv_{\varepsilon}(x,t)| \leq C_0 x_n^{2-\alpha} \quad \text{in } Q_1^+ \cap \{|x'| \leq 3/4\},
\]
and consequently,
\[
|v_{\varepsilon}(x,t)|\leq C_0 x_n^{2-\alpha} \quad \text{in } Q_{1/2}^+.
\]
Now, suppose the claim holds for some $k$. 
Then we have
\[
v_{\varepsilon}(x,t) \leq C_k x_n^{2-\alpha+\beta_k} \leq C_k x_n^{\beta_{k+1}} \quad \text{in } Q_{1/2^{k+1}}^+,
\]
and consequently,
\[
|L_{\varepsilon}  (t-1+1/2^{k+1}) v_{\varepsilon}| = |(t-1+1/2^{k+1})(x_n+\varepsilon)^\beta f + v_{\varepsilon}|\leq  \widetilde{C}_k (x_n+\varepsilon)^{\beta_{k+1}}  \quad \text{in } Q_{1/2^{k+1}}^+.
\]
By applying Corollary \ref{coro:boundary holder}, case ii) with  with parameters $\beta =\beta_{k+1}$ and $\sigma_k=2-\alpha+\beta_{k+1}$, to a suitable parabolic scaling of $(t-1+1/2^{k+1}) v_{\varepsilon}$ for $Q_{1/2^{k+1}}^+$, we deduce the estimate
\[
|(t-1+1/2^{k+1}) v_{\varepsilon}| \leq C_{k+1} x_n^{2-\alpha+\beta_{k+1}} \quad \text{in }Q_{1/2^{k+2}}^+,
\]
which implies the claim 
for the case $k+1$. This completes the inductive step, yielding
\[
|v_{\varepsilon}(x,t)| \leq C  x_n \quad \text{in } Q_{1/2^{m+1}}^+.
\]
The desired estimate then follows from a standard scaling and covering argument. 
\end{proof}

\begin{thm}\label{prop:local boundary regularity}
Assume all the assumptions in Theorem \ref{thm:local boundary regularity}. Suppose in addition that $a^{ij}\in \C^{\gamma}(\overline{Q}_{3/4}^+)$ for some $\gamma>0$.
Then, there exists a constant  $C_{\gamma}> 0$, depending only on $n, \alpha, \beta$, $\lambda$, $\gamma$ and  the H\"{o}lder norm $\|a_{ij}\|_{\C^{\gamma}(Q_{3/4}^+)}$, such that
\[
\|Dv_{\varepsilon}\|_{L^\infty(Q_{1/2}^+)} \leq C_{\gamma} \left( \|g\|_{L^\infty(Q_1^+)} + \|f\|_{L^\infty(Q_1^+)} \right).
\] 
\end{thm}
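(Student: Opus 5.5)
We combine the linear growth bound of Theorem \ref{thm:local boundary regularity} with intrinsic rescaling at interior points, and apply classical Schauder/H\"older theory in the rescaled cylinder. Normalize as in \eqref{eq:normalizationg}. By Theorem \ref{thm:local boundary regularity}, applied on slightly larger cylinders via scaling and covering,
\[
|v_\varepsilon(x,t)|\le C x_n \quad\text{in } Q_{3/4-\eta}^+ .
\]
Points with $x_n\ge c_0$ are handled directly: there the operator $L_\varepsilon$ is uniformly parabolic, uniformly in $\varepsilon\in[0,1]$, with $a^{ij}$ H\"older continuous, so the $C^{1+\gamma}$ interior estimate gives $|Dv_\varepsilon|\le C$. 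This estimate uses only the H\"older continuity of $a^{ij}$ and boundedness of $b,c,f$; it can be obtained by freezing coefficients, or via the $W^{2,1}_p$ estimate followed by Sobolev embedding.

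For a point $z=(x,t)\in Q_{1/2}^+$ with $r:=x_n$ small, we rescale around the boundary projection $z_0=(x',0,t)$. Set $w(y,\tau):=v_\varepsilon\bigl((x',0)+ry,\,t+r^{2-\alpha}(\tau-1)\bigr)/r$. This is normalized by $r$, not $r^{2-\alpha}$, to match the linear growth. On $Q_{1/2}(0,1,1)\subset\{y_n\ge 1/2\}$ we have $|w|\le C$ by the linear growth bound. The function $w$ satisfies
\[
w_\tau-(y_n+\varepsilon/r)^\alpha\bigl(a^{ij}_r w_{ij}+b^i_r w_i\bigr)+c_r w = r^{1-\alpha}(ry_n+\varepsilon)^\beta f_r ,
\]
with $a_r,b_r,c_r$ as in \eqref{eq:scaled coefficients}.

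Two difficulties arise. First, the diffusion coefficient $(y_n+\varepsilon/r)^\alpha$ is unbounded when $\varepsilon\gg r$. To fix this, when $\varepsilon>r$ we instead use the time scale $(r+\varepsilon)^{\alpha}r^{-2}$: write $t=t_0+r^2(r+\varepsilon)^{-\alpha}(\tau-1)$. Then the rescaled operator is uniformly parabolic, with diffusion coefficient $((ry_n+\varepsilon)/(r+\varepsilon))^\alpha\in[c,C]$ on $y_n\in[1/2,3/2]$. Its first derivatives are bounded, so its H\"older norm is controlled. The right-hand side becomes $r(r+\varepsilon)^{-\alpha}(ry_n+\varepsilon)^\beta f$, which is bounded by $C r(r+\varepsilon)^{\beta-\alpha}$.

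Second, the right-hand side must be bounded. Since $\beta>\alpha-1$, we get $r(r+\varepsilon)^{\beta-\alpha}\le (r+\varepsilon)^{\beta-\alpha+1}\le C$, so it is indeed bounded. In the case $\varepsilon\le r$ the right-hand side is $\lesssim r^{1-\alpha+\beta}\le1$ as well. The drift becomes $r\,b$, and $c$ acquires a factor $\le r^{2-\alpha}$; both are bounded. The coefficient $a^{ij}_r$ has a H\"older seminorm in the rescaled Euclidean variables bounded by $[a]_{\C^\gamma}$. This follows from Lemma \ref{lem:interior_seminorm_scaling}, since the intrinsic distance shrinks under rescaling and $s$ on $Q_{1/2}(0,1,1)$ is comparable to the Euclidean parabolic distance.

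Theorem \ref{thm:standard_interior_boundary_estimates}(i)–(ii) in its $C^{1+\gamma}$ form then yields $|D_y w(e_n,1)|\le C$. Since $D_y w = D_x v_\varepsilon$ (one factor $r$ from the chain rule cancels the normalization), we conclude $|Dv_\varepsilon(z)|\le C$.

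The main obstacle is the second step: making the interior estimate uniform in $\varepsilon$ and in the ratio $\varepsilon/r$. This requires a case split between $\varepsilon\le r$ and $\varepsilon>r$ with the two different time rescalings. It also requires checking carefully that the rescaled coefficient H\"older norms and the source bound use exactly $\beta>\alpha-1$ and the intrinsic H\"older assumption on $a^{ij}$.
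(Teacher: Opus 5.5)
Your proof is correct. In the regime $\varepsilon\le x_n$ it coincides with the paper's Case 1; normalizing by $r$ instead of $r^{2-\alpha}$ is cosmetic.

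In the regime $\varepsilon>x_n$ you take a genuinely different route. The paper rescales at the boundary point $(x',0,t)$ using the intrinsic scaling \eqref{eq:scale at x_n=0} at scale comparable to $\varepsilon$. The rescaled diffusion is then $(y_n+1)^\alpha$, which is uniformly parabolic up to $\{y_n=0\}$, and the Dirichlet data vanish there. A classical \emph{boundary} Lipschitz estimate then gives $|D\tilde u|\le C\varepsilon^{\alpha-1}$, using only $\|v_\varepsilon\|_{L^\infty(Q^+_{2\varepsilon})}\le C\varepsilon$ and $\|\tilde f\|_{L^\infty}\le C\varepsilon^{\beta}\le C\varepsilon^{\alpha-1}$.

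You instead stay at the interior point with spatial scale $x_n$ and compress time by $x_n^2(x_n+\varepsilon)^{-\alpha}$. This normalizes the effective diffusion $(x_n+\varepsilon)^\alpha$, so only interior estimates are needed.

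Your second scaling in fact works for every $\varepsilon\ge 0$. On $y_n\in[1/2,3/2]$ one always has $(ry_n+\varepsilon)/(r+\varepsilon)\in[1/2,3/2]$. The zero-order coefficient is at most $|c|r^{2-\alpha}$, and the source is at most $C(r+\varepsilon)^{1+\beta-\alpha}$. Hence the case split you flag as the main obstacle is unnecessary.

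The paper's route has the advantage of staying entirely inside the homogeneous intrinsic scaling of Section 2. Yours avoids any boundary regularity theory for the regularized problem; the boundary enters only through the linear growth bound.

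Two small remarks. First, with your compressed time scale, Lemma \ref{lem:interior_seminorm_scaling} does not literally apply. The H\"older bound on the rescaled $a^{ij}$ still follows directly: because $x_n^2(x_n+\varepsilon)^{-\alpha}\le x_n^{2-\alpha}$, the intrinsic distance between original points is at most $Cx_n^{1-\alpha/2}$ times the Euclidean parabolic distance between the rescaled points. Second, since $f$ is only bounded, Theorem \ref{thm:standard_interior_boundary_estimates}(ii) as stated does not yield the gradient bound. The $W^{2,1}_p$ estimate followed by Sobolev embedding, which you mention, is the right tool; the paper likewise just cites a classical Lipschitz estimate.
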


\begin{proof}
For simplicity, we assume the normalization \eqref{eq:normalizationg} holds and denote $u = v_{\varepsilon}$ and fix $(x', x_n, t) \in Q_{1/2}^+$.  

\textbf{Case 1:}  $x_n \geq \varepsilon$. Let $r = x_n$ and consider the scaled function $u_r$ defined via \eqref{eq:scale at x_n=0} at $z_0 = (x', 0, t)$. 
The scaled function $u_r$ solves a uniformly parabolic equation \eqref{eq:scaled version equation} in $Q_{3/4}(0,1,1)$ with Hölder continuous coefficients. Thus, by Theorem \ref{thm:local boundary regularity}, we have
\[
\|u_r\|_{L^\infty(Q_{3/4}(0,1,1))} \leq  r^{\alpha-2} \|u\|_{L^\infty(Q_{2r}^+(z_0))} \leq C r^{\alpha-2} x_n \leq C r^{\alpha-1}.
\]
Consequently, applying the classical Lipschitz estimate, we derive that
\[
|Du(x,t)| = r^{1-\alpha} |D u_r(0,1,1)| \leq C r^{1-\alpha} \left( \|u_r\|_{L^\infty(Q_{3/4}(0,1,1))} + \|(x_n^{\beta}f)_r\|_{L^\infty(Q_1^+)} \right) \leq C,
\]
where in the last inequality we used $\|(x_n^{\beta}f)_r\|_{L^\infty(Q_1^+)} \leq C r^{\beta} \leq C r^{\alpha-1}$  for $\beta > \alpha - 1$.

\textbf{Case 2:}  $x_n\leq \varepsilon$. We consider the scaled function $\widetilde{u}=u_{2\varepsilon}$ defined via \eqref{eq:scale at x_n=0} at $z_0 = (x', 0, t)$.
Then $\widetilde{u}$ solves the uniformly parabolic equation
\[
\begin{cases}
\partial_s \widetilde{u}-(y_n+1)^\al (a^{ij}D_{ij}\widetilde{u}+\va b^i D_i\widetilde{u})+\va^{2-\al}c\widetilde{u}=\widetilde{f} & \text{in} \quad Q_1^+, \\
\wt{u} = 0 & \text{on} \quad \{x_n=0\},
\end{cases}
\]
where $\widetilde{f}=\varepsilon^{\beta} (y_n+1)^\beta f(\varepsilon y,\varepsilon^{2-\al}s)$. By Theorem \ref{thm:local boundary regularity}, we have
\[
\|\widetilde{u}\|_{L^\infty(Q_1^+)}\leq \|u_{2\varepsilon}\|_{L^\infty(Q_1^+)} = (2\varepsilon)^{\alpha-2}\|u\|_{L^\infty(Q_{2\varepsilon}^+(z_0))} \leq C\varepsilon^{\alpha-2} x_n\leq C\varepsilon^{\alpha-1}.
\]
Consequently, applying the classical boundary Lipschitz estimate, we derive that
\[
|Du(x,t)|\leq C\varepsilon^{1-\alpha}\|D\widetilde{u}\|_{L^\infty(Q_{1/2}^+)} \leq C \varepsilon^{1-\alpha} \left( \|\widetilde{u}\|_{L^\infty(Q^+_{3/4})} + \|\widetilde{f}\|_{L^\infty(Q_{3/4}^+)} \right) \leq C.
\]
\end{proof}

\begin{thm}\label{thm:holder estimates for general coefficients}
Assume all the assumptions in Theorem \ref{prop:local boundary regularity}. There exist constants $\gamma_0 >0$ and $C > 0$, depending only on $n, \alpha, \beta$, and $\lambda$, such that  $v \in \C^{\gamma_0}(Q_{1/2}^+)$ and satisfies
\[
    \|v\|_{\C^{\gamma_0}(Q_{1/2}^+)} \leq C \left( \|g\|_{L^\infty(\partial_p Q_1^+)} + \|f\|_{L^\infty(Q_1^+)} \right),
    \]
Moreover, if $g \in \C^{\gamma_0}(\partial_p Q_1^+)$, 
then $v \in \C^{\gamma_0}(\overline{Q}_1^+)$ and satisfies
\begin{equation}\label{eq:global holder}
    \|v\|_{\C^{\gamma_0}(Q_1^+)} \leq C \left(\|f\|_{L^\infty(Q_1^+)} + \|g\|_{\C^{\gamma_0}(\partial_p Q_1^+)} \right).
\end{equation}
Furthermore, if we additionally assume that $\|a^{ij}\|_{\C^{\tilde{\gamma}}}\leq \lambda^{-1}$ for some $\tilde{\gamma}> 0$, then the estimates hold for all $\gamma_0 \in (0, \gamma^*)$, where the constant $C$ now also depends on $\tilde{\gamma}$.
\end{thm}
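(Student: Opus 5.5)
The plan is to prove the estimate through Lemma \ref{lem:euclidean_patching}: we bound the boundary pointwise seminorm $[v]_{\C^{\gamma_0}_{bd}}$ and the weighted interior seminorm $[v]_{\C^{\gamma_0}_{int}}$ separately, and then patch them together. Throughout we assume the normalization \eqref{eq:normalizationg}. All estimates are first proved for $v_\varepsilon$ with constants independent of $\varepsilon$, and then passed to $v$ using the uniform convergence in Theorem \ref{thm:Existence and uniqueness}.

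\textbf{Boundary seminorm on $Q_{1/2}^+$.} By Theorem \ref{thm:local boundary regularity} we have $|v(x,t)|\le Cx_n$ in $Q_{1/2}^+$, and $v=0$ on $\{x_n=0\}$. Fix a boundary point $z_0=(x_0',0,t_0)$ and a point $z_1$ with $s[z_1,z_0]\approx \rho^{1-\alpha/2}$. Then $z_1\in \partial_pQ^+_{\rho}(z_0)$ roughly, so $x_{1,n}\lesssim\rho$. Hence
\[
|v(z_1)-v(z_0)|\le C\rho\le C\rho^{(1-\alpha/2)\gamma_0}\approx C\,s[z_1,z_0]^{\gamma_0},
\]
and this holds for every $\gamma_0\le 1/(1-\alpha/2)$, in particular for every $\gamma_0<1$. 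Points with large $s$-distance are handled trivially by the $L^\infty$ bound.

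\textbf{Interior seminorm on $Q_{1/2}^+$.} By Lemma \ref{lem:interior_seminorm_scaling}, it suffices to show
\[
r^{(2-\gamma_0)(1-\alpha/2)}[v_{r,z_0}]_{C^{\gamma_0}(Q_{1/2}(0,1,1))}\le C
\]
uniformly in $r$ and $z_0$. The rescaled function $v_r$ solves \eqref{eq:scaled version equation}, which is uniformly parabolic in $Q_{3/4}(0,1,1)\subset\{y_n\ge1/4\}$, with right-hand side $r^\beta y_n^\beta f_r$. Moreover, by the linear growth bound, $\|v_r\|_{L^\infty(Q_{3/4}(0,1,1))}\le Cr^{\alpha-1}$.

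\begin{itemize}
\item With only bounded measurable $a^{ij}$, Theorem \ref{thm:standard_interior_boundary_estimates}(i) (Krylov--Safonov) gives
\[
[v_r]_{C^{\gamma_0}}\le C(r^{\alpha-1}+r^\beta)\le Cr^{\alpha-1}.
\]
We therefore need $(2-\gamma_0)(1-\alpha/2)+\alpha-1\ge0$, i.e.\ $1-\gamma_0(1-\alpha/2)\ge0$, which holds for every $\gamma_0\in(0,1)$. We take $\gamma_0$ to be the Krylov--Safonov exponent.
\item When $a^{ij}\in\C^{\tilde\gamma}$, the rescaled coefficients are uniformly $C^{\tilde\gamma}$ on $Q_{3/4}(0,1,1)$. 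The second part of Theorem \ref{thm:standard_interior_boundary_estimates}(i) then applies for any exponent below $1$, and in particular for any $\gamma_0<\gamma^*$.
\end{itemize}

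Combining this with Lemma \ref{lem:euclidean_patching} gives the first estimate on $Q_{1/2}^+$.

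\textbf{Global estimate \eqref{eq:global holder} when $g\in\C^{\gamma_0}(\partial_pQ_1^+)$.} The new difficulty is near the lateral and bottom parts of $\partial_pQ_1^+$.

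\begin{itemize}
\item \emph{Boundary points at $x_n=0$ near $\partial_pQ_1^+$.} Since $|g|\le [g]\,x_n^{(1-\alpha/2)\gamma_0}$ along the parabolic boundary, Corollary \ref{coro:boundary holder} with $\sigma=(1-\alpha/2)\gamma_0\le\sigma^*$ gives $|v|\le Cx_n^{\sigma}$ in all of $Q_1^+$. This bounds the boundary seminorm globally.
\item \emph{Interior pairs near the lateral or initial boundary with $x_n>0$.} We rescale as before at scale $r=x_{0,n}$. The rescaled problem now touches the boundary piece $W$, and Theorem \ref{thm:standard_interior_boundary_estimates}(i) with boundary data $g_r$ gives a $C^{\gamma_0}$ bound. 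The rescaled data satisfy $[g_r]_{C^{\gamma_0}}\lesssim r^{-(2-\gamma_0)(1-\alpha/2)}[g]_{\C^{\gamma_0}}$ by the scaling identities of Section \ref{sec:scaling}, while $\|v_r\|_{L^\infty}\lesssim r^{\sigma-2+\alpha}$. Multiplying back by $r^{(2-\gamma_0)(1-\alpha/2)}$ gives bounded quantities.
\end{itemize}

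We expect the main obstacle to be this lateral-boundary step. One must check that the Lipschitz boundary of the rescaled domain is handled uniformly, and that the exponent bookkeeping closes: $\sigma-(2-\alpha)+(2-\gamma_0)(1-\alpha/2)=0$ exactly when $\sigma=(1-\alpha/2)\gamma_0$. This is precisely why Corollary \ref{coro:boundary holder} is invoked with that $\sigma$.
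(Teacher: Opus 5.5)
Your proposal is correct and follows essentially the same route as the paper. It patches via Lemma~\ref{lem:euclidean_patching}. It bounds the boundary seminorm through the growth estimates, namely $|v|\le Cx_n$ for the local bound and Corollary~\ref{coro:boundary holder} with $\sigma=(1-\frac{\alpha}{2})\gamma_0$ for \eqref{eq:global holder}. It bounds the interior seminorm by applying \eqref{eq:classical holder} to the rescalings $v_{r,z_0}$, with the same exponent bookkeeping the paper uses.

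The only superfluous element is the detour through $v_\varepsilon$. Rescaling $v_\varepsilon$ at scales $r\ll\varepsilon$ produces the unbounded factor $(y_n+\varepsilon/r)^\alpha$ in front of $a^{ij}D_{ij}$, so the constants would not be uniform in $\varepsilon$. Since the growth bounds already pass to the limit, you can simply run the argument on $v$ itself, as the paper does.
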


\begin{proof}
We only detail the proof of \eqref{eq:global holder}. The first one follows from the boundary Lipschitz estimates in Theorem \ref{prop:local boundary regularity} combined with an analogous scaling argument below.

For the sake of simplicity, we may normalize the solution such that 
\[
\|v\|_{L^\infty(Q_1^+)} + \|f\|_{L^\infty(Q_1^+)} + \|g\|_{\C^{\gamma_0}(Q_1^+)} \leq 1.
\]
Let $\gamma_0 \in (0,1)$ be the universal exponent defined in Theorem \ref{thm:standard_interior_boundary_estimates}. Invoking Lemma \ref{lem:euclidean_patching}, we can bound the global intrinsic H\"{o}lder seminorm as follows:
\[ 
[v]_{\C^{\gamma_0}(Q_{1}^+)} \le C(n, \alpha, \gamma_0) \left( [v]_{\C_{bd}^{\gamma_0}(Q_{1}^+)} + [v]_{\C_{int}^{\gamma_0}(Q_{1}^+)} \right). 
\]
Note that $\|g\|_{\C^{\gamma_0}(Q_1^+)} \leq 1$ implies $|g(x,t)| \leq C x_n^{\gamma_0(1-\frac{\alpha}{2})}$.
Corollary \ref{coro:boundary holder} i) then implies the pointwise bound 
\[
|v(x,t)| \leq C x_n^{\gamma_0(1-\frac{\alpha}{2})}.
\]
Since $v=0$ on the degenerate boundary $\{x_n = 0\}$, this yields $[v]_{\C_{bd}^{\gamma_0}(Q_{1}^+)} \leq C$.

Next, concerning the interior term $[v]_{\C_{int}^{\gamma_0}(Q_{1}^+)}$, we recall the boundary-centered scaling $v_{r,z_0}$ defined in \eqref{eq:scale at x_n=0} alongside the structural inequality \eqref{eq:seminorm_scaling}
\[
[v]_{\C_{int}^{\gamma_0}(Q_1^+)}=\sup_{\substack{z_0 \in \partial_p Q_1^+ \cap \{x_n = 0\} \\ 0 < r < \rho }} r^{(2-\gamma_0)(1-\frac{\alpha}{2})} [v_{r,z_0}]_{C^{\gamma_0}(Q_{1/2}(0,1,1))}.
\]
Specifically, noting that
\[
\|v_{r,z_0}\|_{L^\infty(Q_{1/2}(0,1,1))}\leq r^{\alpha-2}\|v\|_{L^\infty(Q_{2r}^+(z_0))} \leq Cr^{-(2-\gamma_0)(1-\frac{\alpha}{2})},
\]
the desired interior bound follows by applying \eqref{eq:classical holder} uniformly to each rescaled $v_{r,z_0}$.

Furthermore, if we additionally assume that $\|a^{ij}\|_{\C^{\tilde{\gamma}}}\leq \lambda^{-1}$ for some $\tilde{\gamma}> 0$, then the above estimates hold for all $\gamma_0 \in (0, \gamma^*)$, where the constant $C$ now also depends on $\tilde{\gamma}$.
\end{proof}

\section{Schauder regularity for linear equations on half cylinder}
\label{sec:schauder regularity on halc cylinder}

\subsection{Higher-order local derivative estimates}

In this section, we investigate the boundary Schauder regularity for the linear degenerate parabolic equation
\begin{equation}\label{eq:equation_v0}
    \begin{cases}
        L_0 v := v_t - x_n^\alpha \Delta v = x_n^\beta f & \text{in } Q_1^+, \\
        v = 0 & \text{on } \partial_p Q_1^+ \cap \{x_n = 0\}.
    \end{cases}
\end{equation}
Our primary focus is to quantitatively capture the behavior of solutions near the degenerate boundary $\{x_n = 0\}$. 

In Theorem~\ref{prop:local boundary regularity}, we obtain the Lipschitz bound
\[
    \|Dv\|_{L^\infty(Q_{1/2}^+)} \leq C \left( \|v\|_{L^\infty(Q_1^+)} + \|f\|_{L^\infty(Q_1^+)} \right).
\]
In this subsection, we establish the higher-order derivative estimates for $v$, assuming sufficient regularity on the source term $f$.

\begin{thm}\label{thm:C2 estimate}
Let $v$ be a solution to \eqref{eq:equation_v0}, and suppose that the source term satisfies $f \in C^3(Q_1^+)$. Then $v \in C^1(\overline{Q}_{1/2}^+)$, and there exists a constant $C > 0$, depending only on $n, \alpha$, and $\beta$, such that
\begin{equation}\label{eq:f_C3_estimates}
\begin{aligned}
\sum_{\substack{0 \le l+|k|+m \le 2 \\ 0 \le m \le 1}} \|\partial_t^l D_{x'}^k D_{x_n}^m v\|_{L^\infty(Q_{1/2}^+)}&+\|x_n^{\al-\beta}D_{x_nx_n}v\|_{L^\infty(Q_{1/2}^+)}\\
&\qquad\leq C \left( \|v\|_{L^\infty(Q_1^+)} + \|f\|_{C^3(Q_1^+)} \right).  
\end{aligned}
\end{equation}
Moreover, we have $\pa_{tt} v=0$ on $\pa_p Q_{1/2}^+ \cap \{x_n=0\}$ and
\[
\|D(\pa_t^2 v)\|_{L^\infty(Q_{1/2}^+)}\leq C  \left( \|v\|_{L^\infty(Q_1^+)} + \|f\|_{C^3(Q_1^+)} \right).
\]
\end{thm}

As a corollary of Theorem \ref{thm:C2 estimate}, we obtain

\begin{prop}\label{lem:approximation lem} 
Let $f_0$ be a constant and $v$ be a solution of     
\[
\begin{cases}
        L_0 v := v_t - x_n^\alpha \Delta v = x_n^\beta f_0 & \text{in } Q_1^+, \\
        v = 0 & \text{on } \pa_p Q_1^+ \cap \{x_n = 0\}.
    \end{cases}
\]
Suppose $\|v\|_{L^{\infty}(Q_1^+)} +|f_0|\leq 1$, then for every $r\leq 1/2$, we have
\[
\|v-P_{(0,1)}\|_{L^\infty(Q_r^+)}\leq Cr^{3-\al},
\]  
where  $C$ is a constant depending only on $n,\al$ and $\beta$, and
\[
P_{(0,1)}(x,t)= v_n(0,1) x_n-\frac{f_0}{(2-\al+\beta)(1-\al+\beta)}x_n^{2-\al+\beta}.
\]   
\end{prop}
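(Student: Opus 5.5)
Since $f_0$ is constant, we reduce to the case of zero right-hand side by subtracting an explicit particular solution. Set
\[
w(x):=-\frac{f_0}{(2-\al+\beta)(1-\al+\beta)}x_n^{2-\al+\beta}.
\]
A direct computation gives $-x_n^\al\Delta w = f_0 x_n^\beta$ and $w_t=0$, so $L_0 w = x_n^\beta f_0$. Note that $(2-\al+\beta)(1-\al+\beta)\neq 0$ because $\beta>\al-1$. Also $w=0$ on $\{x_n=0\}$, and $|w|\le C$ on $Q_1^+$.

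The difference $u:=v-w$ then solves $L_0u=0$ in $Q_1^+$, with $u=0$ on $\pa_pQ_1^+\cap\{x_n=0\}$ and $\|u\|_{L^\infty(Q_1^+)}\le C$. Moreover,
\[
v-P_{(0,1)} = u-u_n(0,1)x_n + \bigl(w_n(0,1)\bigr)x_n .
\]
Here $w_n(0,1)=0$, since $w_n\sim x_n^{1-\al+\beta}$ with $1-\al+\beta>0$. So $v_n(0,1)=u_n(0,1)$, where the value of $v_n$ at the boundary point is understood as the limit, and the claim reduces to showing
\[
|u(x,t)-u_n(0,1)x_n|\le Cr^{3-\al}\quad\text{in }Q_r^+ .
\]

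We apply Theorem \ref{thm:C2 estimate} to $u$ with $f\equiv 0$. This gives bounded $\partial_t u$, $D_{x'}u$, $D^2_{x'}u$, $D_{x'}D_{x_n}u$, $\partial_tD_{x_n}u$, $\partial_t^2u$, and $u\in C^1(\overline Q_{1/2}^+)$. It also gives $\|x_n^{\al-\beta}D_{nn}u\|_{L^\infty(Q_{1/2}^+)}\le C$.

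The $x_n^{\al-\beta}$ weight on $D_{nn}u$ is too weak for our purpose, so we redo the estimate using the equation with zero right-hand side. Since $D_{nn}u=x_n^{-\al}(u_t-\Delta_{x'}u)$, it suffices to show that $u_t$ and $\Delta_{x'}u$ vanish linearly at $x_n=0$. Both $u_t$ and $D_{x'}^2u$ are zero on the flat boundary, because $u$ vanishes there for all $t$ and $x'$. They have bounded $x_n$-derivative by Theorem \ref{thm:C2 estimate}: $D(\pa_t u)$ is bounded, and for $D_{x'}^2u$ one applies the theorem to difference quotients in $x'$, which is legitimate since the equation is translation invariant in $x'$. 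Hence $|u_t|+|\Delta_{x'}u|\le Cx_n$, and so
\[
|D_{nn}u|\le Cx_n^{1-\al}.
\]

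Now Taylor expand at $(0,1)$. Write
\[
u(x,t)=u(x',x_n,t)-u(x',0,t)=\int_0^{x_n}u_n(x',s,t)\,ds,
\]
and
\[
u_n(x',s,t)-u_n(0,0,1)=\bigl[u_n(x',s,t)-u_n(x',0,t)\bigr]+\bigl[u_n(x',0,t)-u_n(0,0,1)\bigr].
\]
The first bracket is bounded by $\int_0^s|D_{nn}u|\le Cs^{2-\al}$. The second is bounded by $C(|x'|+|t-1|)\le C(r+r^{2-\al})$, using bounded $D_{x'}u_n$ and $\pa_tu_n$. Integrating in $s$ from $0$ to $x_n\le r$ gives
\[
|u-u_n(0,1)x_n|\le C\bigl(r^{3-\al}+r^2+r^{3-\al}\bigr)\le Cr^{3-\al},
\]
since $r\le1$ and $3-\al<2$.

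The main obstacle is the sharper bound $|D_{nn}u|\le Cx_n^{1-\al}$, which is needed because the stated weight $x_n^{\al-\beta}$ alone would only give $r^{2-\al+\beta}$. The key inputs are the Lipschitz control of $u_t$ and $D_{x'}^2u$ in $x_n$. For $u_t$, I would first try the stated bound on $D(\pa_t u)$ obtained from Theorem \ref{thm:C2 estimate} applied to $u_t$. Applying the theorem to $u_t$ and to $x'$-difference quotients requires some care with domain shrinking, which is handled by the usual covering and rescaling.
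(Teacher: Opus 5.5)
Your proposal is correct and follows essentially the paper's argument. The paper sets $h=v-P_{(0,1)}$, which is exactly your $u-u_n(0,1)x_n$, notes that $L_0h=0$ with zero data on the flat boundary, and Taylor-expands at $(0,1)$ using the derivative bounds of Theorem~\ref{thm:C2 estimate} together with $|h_{x_nx_n}|\le Cx_n^{1-\alpha}$.

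One small correction: the homogeneous equation gives $D_{nn}u=x_n^{-\alpha}u_t-\Delta_{x'}u$, not $x_n^{-\alpha}(u_t-\Delta_{x'}u)$. So the boundedness of $\Delta_{x'}u$ from Theorem~\ref{thm:C2 estimate}, together with $|u_t|\le Cx_n$, already yields $|D_{nn}u|\le Cx_n^{1-\alpha}$. This is how the paper gets it, via Corollary~\ref{cor:vt_higher}(i) with $f\equiv 0$. Your third-order difference-quotient step is therefore unnecessary, though harmless.
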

\begin{proof} 
Let $h=v-P_{(0,1)}$, then
\[
L_0h=0 \quad \text{in}~Q_1^+, \qquad  h=0 \quad \text{on}~ \pa_p Q_1^+\cap \{x_n=0\}.
\]
The local derivative estimates in Theorem \ref{thm:C2 estimate} yield bounds on the derivatives of $h$ in $Q_r^+$. More precisely, for $i,j=1,\dots,n-1$, we have
\begin{align*}
&|h_{x_ix_j}|,\ |h_{x_ix_n}|,\ |h_{x_it}|,\ |h_{x_nt}|\le C\|h\|_{L^\infty(Q_1^+)},\\
&|h_{x_nx_n}|\le Cx_n^{1-\al}\|h\|_{L^\infty(Q_1^+)},\quad |h_{tt}|\le Cx_n\|h\|_{L^\infty(Q_1^+)}.
\end{align*}
Combining these bounds with $|x_i|\le r$, $0\le x_n\le r$, and $0\le 1-t\le r^{2-\al}$, we obtain
\[
\|v-P_{(0,1)}\|_{L^\infty(Q_r^+)}=\|h\|_{L^\infty(Q_r^+)}\le Cr^{3-\al}\|h\|_{L^\infty(Q_1^+)} \leq Cr^{3-\al}.
\]
\end{proof}

To circumvent the lack of \emph{a priori} regularity of $v$ for direct differentiation, we introduce a parameter $\varepsilon > 0$ and consider the classical solution to the strictly parabolic approximation:
\begin{equation}\label{eq:equation}
\begin{cases}
L_{0,\varepsilon} v_\varepsilon := (v_\varepsilon)_t - (x_n+\varepsilon)^\alpha \Delta v_\varepsilon = (x_n+\varepsilon)^\beta f & \text{in} \quad Q_1^+, \\
v_\varepsilon = v & \text{on} \quad \partial_p Q_1^+.
\end{cases}
\end{equation}
For any fixed $\varepsilon > 0$, the operator $L_{0,\varepsilon}$ is non-degenerate even at $\{x_n = 0\}$; hence, classical parabolic theory guarantees that $v_\varepsilon$ is sufficiently smooth up to the boundary, provided that $f$ is sufficiently smooth.
Once $\varepsilon$-uniform higher-order estimates are established for $v_\varepsilon$, we recover the regularity of the original solution $v$ by passing to the limit as $\varepsilon \to 0$.  

\begin{lem}\label{lem:bound of vii}
Suppose that $u$ is a solution of \eqref{eq:equation} satisfying $u=0$ on $\pa_p Q_1^+\cap \{x_n=0\}$, and that
\[
|u|\leq 1,\quad |f|\leq 1,\quad |f_{x_j}|\le 1\quad  \text{in}~Q_1^+,
\]
for some $1\leq j\le n-1$. Then
\[
|u_{x_ix_j}|\le C \quad \text{in } Q_{1/2}^+.
\]
for some constant $C$ depending only on $n,\al$ and $\beta$.
\end{lem}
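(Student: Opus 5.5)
The plan is to differentiate the equation in the tangential direction $x_j$ and apply the first-derivative machinery of Section~\ref{sec: boundary holder regularity} to $w:=u_{x_j}$. Since $L_{0,\varepsilon}$ has coefficients depending only on $x_n$, and $u$ is smooth up to $\{x_n=0\}$ for $\varepsilon>0$, the function $w$ solves
\[
L_{0,\varepsilon} w = (x_n+\varepsilon)^\beta f_{x_j}\quad\text{in } Q_1^+ .
\]
Moreover, $w=0$ on $\partial_pQ_1^+\cap\{x_n=0\}$, because $u$ vanishes identically there and $x_j$ is tangential. We then have a solution of exactly the type treated in Theorems~\ref{thm:local boundary regularity} and \ref{prop:local boundary regularity}, with $a^{ij}=\delta_{ij}$, $b=c=0$, and source $f_{x_j}$ bounded by $1$. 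Applying Theorem~\ref{prop:local boundary regularity} to $w$ gives $|Dw|=|u_{x_ix_j}|\le C(\|w\|_{L^\infty}+1)$ in a smaller cylinder. After rescaling and covering, this yields the claim in $Q_{1/2}^+$.

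The remaining issue, and the main obstacle, is that these results need $L^\infty$ control of $w$ on a cylinder containing $Q_{1/2}^+$, while we only know $|u|\le1$. I would get this from the Lipschitz estimate. By Theorem~\ref{prop:local boundary regularity} applied to $u$ itself (source $f$, bounded data), $|Du|\le C$ in $Q_{3/4}^+$, after adjusting cylinders by scaling and covering. Hence $|w|\le C$ in $Q_{3/4}^+$, with a constant uniform in $\varepsilon$.

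Then $w$ is a bounded solution in $Q_{3/4}^+$ with zero data on the flat part. I apply Theorem~\ref{prop:local boundary regularity} again, rescaled from $Q_{3/4}^+$ to $Q_1^+$. The boundary data on the lateral and bottom parts of $\partial_p Q_{3/4}^+$ are just the values of $w$, which are bounded by $C$; only $L^\infty$ norms of $g$ enter that theorem. This gives $|Dw|\le C$ in $Q_{1/2}^+$.

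Some care is needed with the scaling, because $u_r$ rescales $u$ by $r^{2-\alpha}$ and derivatives by powers of $r$. The source term also changes, to $r^{\beta}(y_n+\varepsilon/r)^\beta f_{x_j}$, which stays of the form $(y_n+\varepsilon')^\beta \tilde f$ with $\tilde f$ bounded. The regularization $\varepsilon$ likewise changes to $\varepsilon/r$, which still lies in $[0,1]$ after shrinking slightly or allowing $\varepsilon'\le 4/3$. Since the constants in those theorems are independent of $\varepsilon$ and depend only on $n,\alpha,\beta,\lambda$ (the coefficients are constant, so the $\gamma$-dependence of $C_\gamma$ is harmless), the bound is uniform in $\varepsilon$.

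A minor point is justifying differentiation. For $\varepsilon>0$ the operator is uniformly parabolic with $x_n$-dependent smooth coefficients, so $u\in C^{3}$ in $x'$ in the interior, and near $\{x_n=0\}$ it is smooth by classical boundary regularity since the boundary data are zero and $f_{x_j}$ exists. Thus $w$ is a classical solution continuous up to the parabolic boundary of $Q_{3/4}^+$.
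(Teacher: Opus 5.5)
Your proposal is correct and is essentially the paper's proof. You note that $u_{x_j}$ vanishes on the flat boundary and solves $L_{0,\varepsilon}u_{x_j}=(x_n+\varepsilon)^\beta f_{x_j}$; you use Theorem~\ref{prop:local boundary regularity} once on $u$ to bound $u_{x_j}$ in $Q_{3/4}^+$, and once more on $u_{x_j}$ to bound $Du_{x_j}$ in $Q_{1/2}^+$. Your extra remarks on rescaling and $\varepsilon$-uniformity only make explicit what the paper leaves implicit.
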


\begin{proof}
Since $u$ is smooth up to the boundary, the homogeneous condition $u = 0$ on $\{x_n=0\}$ implies that $u_{x_j} = 0$ on $\{x_n=0\}$ for each $1 \leq j \leq n-1$. Furthermore, the tangential derivative $u_{x_j}$ satisfies the differentiated equation
\[
L_{0,\varepsilon} u_{x_j} = (x_n+\varepsilon)^\beta f_{x_j} \quad \text{in } Q_{3/4}^+.
\]
By Theorem \ref{prop:local boundary regularity}, we already have $|Du| \leq C$ in $Q_{3/4}^+$. A subsequent application of Theorem \ref{prop:local boundary regularity} to $u_{x_j}$ yields the second-order estimate $|D(u_{x_j})| \leq C$ in $Q_{1/2}^+$.
\end{proof}

\begin{lem}\label{lem:bound of vt}
Assume all the assumptions in Lemma \ref{lem:bound of vii}. Assume in addition that
\[
|u_{x_i}|\leq 1,
\quad |u_{x_ix_i}|\le 1,
\quad (\forall~i=1,\dots,n-1),
\quad |u_{x_n}|\leq 1
\quad  \text{and}\quad
|f_t|\leq 1  \quad \text{in}~Q_1^+.
\]
Then $|u_t|\le C$ in $Q_{1/2}^+$. for some constant $C$ depending only on $n,\al$ and $\beta$. 
Moreover, $|u_t|\leq Cx_n$.
\end{lem}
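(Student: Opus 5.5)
The key observation is that $u_t$ is controlled by the equation itself once the spatial second derivatives are known, but $u_{x_nx_n}$ is not yet known. So I would instead estimate $u_t$ directly through the difference-quotient/time-derivative trick used in the proof of Theorem \ref{thm:local boundary regularity}. Since $\varepsilon>0$, $u$ is smooth up to $\{x_n=0\}$. Because $u=0$ there for all $t$, we also have $u_t=0$ on $\partial_pQ_1^+\cap\{x_n=0\}$. Differentiating \eqref{eq:equation} in $t$ gives
\[
L_{0,\varepsilon}u_t=(x_n+\varepsilon)^\beta f_t \quad\text{in } Q_{3/4}^+ .
\]
This has the same structure as the original problem, with $|f_t|\le 1$. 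If we had an $L^\infty$ bound for $u_t$ on some intermediate cylinder, Theorem \ref{thm:local boundary regularity} applied to $u_t$ (with zero data on the flat boundary) would immediately give $|u_t|\le C x_n$, and in particular $|u_t|\le C$ in $Q_{1/2}^+$. So the whole task reduces to an a priori $L^\infty$ bound for $u_t$ on, say, $Q_{3/4}^+$, independent of $\varepsilon$.

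For that bound I would use the equation pointwise, $u_t=(x_n+\varepsilon)^\alpha\Delta u+(x_n+\varepsilon)^\beta f$. By the hypotheses, the tangential part $\sum_{i<n}u_{x_ix_i}$ is bounded, so only $(x_n+\varepsilon)^\alpha u_{x_nx_n}$ remains. I would bound it by interior rescaling at each point $(x',x_n,t)$, splitting into the two cases of Theorem \ref{prop:local boundary regularity}.

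\textbf{Case $x_n\ge\varepsilon$.} Set $r=x_n$ and use the rescaling $u_r$ from \eqref{eq:scale at x_n=0}, which solves a uniformly parabolic equation with smooth coefficients on $Q_{3/4}(0,1,1)$. Since $|u|\le Cx_n$ by Theorem \ref{thm:local boundary regularity}, we get $\|u_r\|_{L^\infty}\le Cr^{\alpha-1}$. Classical interior estimates (Theorem \ref{thm:standard_interior_boundary_estimates}) then give $|\partial_\tau u_r|\le C(r^{\alpha-1}+r^{\beta})$. Scaling back, $|u_t|=|\partial_\tau u_r|\le Cr^{\alpha-1}\le C$, using $\beta>\alpha-1$.

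\textbf{Case $x_n<\varepsilon$.} Rescale at scale $2\varepsilon$ as in Case 2 of Theorem \ref{prop:local boundary regularity}. This gives a uniformly parabolic problem with zero Dirichlet data on the flat boundary. The classical boundary Schauder estimate up to $\{y_n=0\}$ then yields $|\partial_\tau\tilde u|\le C\varepsilon^{\alpha-1}$, and hence again $|u_t|\le C$.

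Notice that this route gives the $L^\infty$ bound on $u_t$ without really needing the extra tangential hypotheses. Those hypotheses are presumably meant for an alternative route: writing $(x_n+\varepsilon)^\alpha u_{x_nx_n}=u_t-(x_n+\varepsilon)^\alpha\sum_{i<n}u_{x_ix_i}-(x_n+\varepsilon)^\beta f$ and integrating in $x_n$. I would keep that route as a fallback.

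\textbf{Main obstacle.} The delicate point is that the constants in the second case are uniform in $\varepsilon$. This requires the boundary estimate for the rescaled problem to hold with coefficients $(y_n+1)^\alpha$ on a fixed domain, and the source term to be controlled, since $\|\tilde f\|\le C\varepsilon^\beta\le C\varepsilon^{\alpha-1}$. Both follow as in Theorem \ref{prop:local boundary regularity}.

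With $\|u_t\|_{L^\infty(Q_{3/4}^+)}\le C$ in hand, we apply Theorem \ref{thm:local boundary regularity} to $u_t$ on $Q_{3/4}^+$, rescaled to unit size, with $g=u_t$ vanishing on the flat part. This gives $|u_t|\le C(\|u_t\|_{L^\infty}+\|f_t\|_{L^\infty})x_n\le Cx_n$ in $Q_{1/2}^+$, which proves the lemma.
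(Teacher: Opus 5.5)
Your argument is correct, but it obtains the key $L^\infty$ bound on $u_t$ by a different mechanism than the paper.

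The paper uses a Bernstein-type maximum principle. After replacing $u$ by $u+2x_n$ so that $1\le u_{x_n}\le 3$, it applies $L_{0,\varepsilon}$ to $w=A\bigl(P-(x_n+\varepsilon)^{\alpha/2}u_{x_n}\bigr)^2+\eta^2u_t^2$. The normalization $u_{x_n}\ge 1$ gives the possibly singular term $-\tfrac14\alpha(2-\alpha)(x_n+\varepsilon)^{3\alpha/2-2}u_{x_n}$ a favorable sign near $\{x_n=0\}$. The good gradient term produced by squaring contains $-(x_n+\varepsilon)^{2\alpha}u_{x_nx_n}^2$. Using the equation and $|u_{x_ix_i}|\le 1$, this becomes $-u_t^2+C|u_t|$ up to bounded terms, which absorbs the growth of $\eta^2u_t^2$. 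That is why the lemma carries the hypotheses on $u_{x_i}$, $u_{x_ix_i}$ and $u_{x_n}$. It is also why the argument needs only pointwise derivative bounds on $f$ and no Schauder theory.

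You instead rerun the two-case rescaling from the proof of Theorem \ref{prop:local boundary regularity} at scale $\max\{x_n,2\varepsilon\}$. You feed $|u|\le Cx_n$ into the classical interior and flat-boundary Schauder estimates. Since $\partial_\tau u_r=u_t$, this gives $|u_t|\le C\max\{x_n,\varepsilon\}^{\alpha-1}$ directly. Your route makes the tangential and gradient hypotheses superfluous. Applied once more to $u_t$, it would also give Corollary \ref{cor:vt_higher}(ii) assuming only H\"older continuity of $f_t$.

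The price is that the Schauder step needs $f$ to be parabolically H\"older in all variables. This does not follow from the listed bounds $|f|,|f_{x_j}|,|f_t|\le 1$, so you should state the assumption explicitly. It is available where the lemma is used, since $f\in C^3$ in Theorem \ref{thm:C2 estimate}. The paper's own computation likewise tacitly uses a bound on $f_{x_n}$. Your final upgrade to $|u_t|\le Cx_n$ via Theorem \ref{thm:local boundary regularity} matches the paper, which appeals to the barrier argument of Theorem \ref{thm:Existence and uniqueness}.
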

\begin{proof}

By assumption, we have $|u_{x_n}|\le 1$. If let $\wt{u}=u+2x_n$, then $\wt{u}$ satisfies
\[
\wt{u}_t=(x_n+\varepsilon)^\al \Delta \wt{u}+(x_n+\varepsilon)^\beta f,\quad \wt{u}_{x_n}=u_{x_n}+2\in [1,3] \quad \text{and}\quad \wt{u}_t=u_t.
\]
Thus, if we establish the conclusion for $\wt{u}$,  it also holds for $u$. Therefore, without loss of generality, we may assume that
\[
1\leq u_{x_n}\leq 3.
\]
Let $A$ and $P$ be two large numbers, $\eta$ be a smooth function satisfying 
\[
\eta=0 \quad \text{on}~ \pa_{p} Q_{3/4}^+\cap \{x_n>0\}\quad \text{and}\quad  \eta>0 \quad \text{in}~ Q_{1/2}^+, 
\]
and define
\[
w=A(P-(x_n+\varepsilon)^{\frac{\al}{2}}u_{x_n})^2+\eta^2 u_t^2.
\]
Since 
\[
L_{0,\va}u_t=(x_n+\va)^\beta f_t \leq C,
\]
we have 
\begin{align*}
L_{0,\varepsilon}\left(\eta^2 u_{t}^2\right) 
& =\eta^2L_{0,\varepsilon}u_{t}^2 +u_{t}^2 L_{0,\varepsilon} \eta^2 -2(x_n+\va)^\al  \langle \nabla \eta^2,  \nabla  u_{t}^2\rangle\\
& \leq \eta^2 L_{0,\varepsilon}u_{t}^2+C \left(u_{t}^2+ (x_n+\va)^\al \eta |u_{t}|  |\nabla u_{t}|\right) \\
& \leq 2u_{t} L_{0,\varepsilon} u_{t}-2(x_n+\va)^\al|\nabla u_{t}|^2+C \left(u_{t}^2+ (x_n+\va)^\al \eta |u_{t}|  |\nabla u_{t}|\right) \\
& \leq C |u_{t}| -2(x_n+\va)^\al|\nabla u_{t}|^2+C \left(u_{t}^2+ (x_n+\va)^\al \eta |u_{t}|  |\nabla u_{t}|\right) \\
& \leq C(u_{t}^2+|u_{t}|+1).
\end{align*}
Notice that
\begin{align*}
&L_{0,\va} (P-(x_n+\va)^{\frac{\al}{2}}u_{x_n})\\
&=-(x_n+\va)^{\frac{\al}{2}}u_{x_nt}+(x_n+\va)^\al \Delta \bigl((x_n+\va)^{\frac{\al}{2}}u_{x_n}\bigr)\\
&= -(x_n+\va)^{\frac{\al}{2}}u_{x_nt}+(x_n+\va)^{\frac{3\al}{2}}\Delta u_{x_n}+  \frac{\al}{2}(\frac{\al}{2}-1) (x_n+\va)^{\frac{3\al}{2}-2} u_{x_n}\\
& \quad +2(x_n+\va)^\al \langle \nabla (x_n+\va)^\frac{\al}{2}, \nabla u_{x_n}\rangle\\
&=-(x_n+\va)^{\frac{\al}{2}} \Bigl(\al (x_n+\va)^{\al-1}\Delta u+(x_n+\va)^\beta f_{x_n}+\beta (x_n+\va)^{\beta -1}f \Bigr)\\
&\quad +\frac{\al}{2}(\frac{\al}{2}-1) (x_n+\va)^{\frac{3\al}{2}-2} u_{x_n}+\al (x_n+\va)^{\frac{3\al}{2}-1} u_{x_nx_n}\\
&=-\al (x_n+\va)^{\frac{3\al}{2}-1}\Delta_{x'} u-(x_n+\va)^{\frac{\al}{2}+\beta}f_{x_n}-\beta (x_n+\va)^{\frac{\al}{2}+\beta-1} f\\
&\quad -\frac{1}{4}\al(2-\al)(x_n+\va)^{\frac{3\al}{2}-2} u_{x_n}\\
& = -(x_n+\va)^{\frac{3\al}{2}-2} \Bigl(\al (x_n+\va)\Delta_{x'}u+(x_n+\va)^{2+\beta-\al}f_{x_n}\\
&\quad +\beta (x_n+\va)^{\beta+1-\al}f+\frac{1}{4}\al(2-\al) u_{x_n}  \Bigr),
\end{align*}
then when $x_n$ is small enough, we have 
\[
L_{0,\va} (P-(x_n+\va)^{\frac{\al}{2}}u_{x_n})\leq 0,
\] 
otherwise,
\[
L_{0,\va} (P-(x_n+\va)^{\frac{\al}{2}}u_{x_n})\leq C,
\]
for some positive constant $C$. Besides,
\[
\begin{split}
(x_n+\va)^\al|\nabla \bigl((x_n+\va)^{\frac{\al}{2}}u_{x_n}\bigr)|^2 &\geq (x_n+\va)^{2\al}|u_{x_nx_n}|^2\\
& \geq \Bigl| u_t-(x_n+\va)^\al \Delta_{x'} u-(x_n+\va)^\beta f\Bigr|^2\\
&\geq |u_t|^2-C |u_t|.
\end{split}
\]
Therefore, if choose $P$ large enough such that 
\[
\frac{P}{2}\leq P-(x_n+\varepsilon)^{\al/2}u_{x_n}\leq  \frac{3P}{2},
\]
we have 
\[
\begin{split}
L_{0,\va} \bigl( (P-(x_n+\va)^{\frac{\al}{2}}u_{x_n})^2\bigr)&=2(P-(x_n+\va)^{\frac{\al}{2}}u_{x_n})L_{0,\va} (P-(x_n+\va)^{\frac{\al}{2}}u_{x_n})\\
&\quad-2(x_n+\va)^\al |\nabla (-(x_n+\va)^{\frac{\al}{2}}u_{x_n})|^2\\
&\leq -2|u_t|^2+C|u_t|+CP.
\end{split}
\]
By choosing $A$ large enough, we have 
\[
L_{0,\va} w\leq A(-2|u_t^2|+C|u_t|+C)+C(u_t^2+|u_t|+1)\leq C \quad \text{in}~\{\eta>0\}.
\]
Combining with the boundary value that
\[
A(P-(x_n+\varepsilon)^{\frac{\al}{2}}u_{x_n})^2+\eta^2 u_t^2=A (P-(x_n+\varepsilon)^{\frac{\al}{2}}u_{x_n})^2\leq C,
\]
the maximum principle implies that
\[
w\leq C \quad \text{in}~\{\eta>0\},
\]
which is 
\[
|u_t|\leq \frac{\sqrt{w}}{\eta}\leq C(n,\al,\beta)\quad \text{in}~Q_{1/2}^+.
\]
In particular, $u_t$ satisfies
\[
(u_t)_t-(x_n+\varepsilon)^\al \Delta u_t
=(x_n+\varepsilon)^\beta f_t,
\]
and $u_t=0$ on $\pa_p Q_1^+\cap \{x_n=0\}$.
Combining the bound $|u_t|\le C$, the assumption $|f_t|\le 1$, and the proof of Theorem~\ref{thm:Existence and uniqueness}, we obtain
\[
|u_t|\le C x_n.
\]
\end{proof}

Notice that $(x_n+\varepsilon)^\al u_{x_nx_n}$ and $u_t$ satisfy the following two equations:
\[
(x_n+\varepsilon)^\al u_{x_nx_n}=
u_t-(x_n+\varepsilon)^\al\Delta_{x'}u
-(x_n+\varepsilon)^\beta f,
\]
and
\[
(u_t)_t=(x_n+\varepsilon)^\al\Delta u_{t}
+(x_n+\varepsilon)^\beta f_t.
\]
\begin{cor}\label{cor:vt_higher}
Let $u$ be as in Lemma~\ref{lem:bound of vt}.
\begin{enumerate}
\item[(i)] Then
\[
|u_{x_nx_n}|\leq C (x_n+\varepsilon)^{\beta-\alpha}
\quad \text{in } Q_{1/2}^+.
\]
In particular, if $f\equiv 0$, then
\[
|u_{x_nx_n}| \le C (x_n+\varepsilon)^{1-\al}
\quad \text{in } Q_{1/2}^+.
\]
\item[(ii)]If we assume in addition that
\[
|f_{tt}|\le 1,\qquad
|\nabla f_t|\le 1,\qquad
|f_{tx_ix_i}|\le 1 \quad
(\forall~ i=1,\dots,n-1) \quad
\text{in } Q_1^+,
\]
then
\[
|u_{tt}|\le C \quad \text{in } Q_{1/2}^+.
\]
\end{enumerate}
The constant $C$ above depends only on $n,\al$ and $\beta$.
\end{cor}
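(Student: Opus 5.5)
For part (i), I will work directly from the identity recorded just before the statement,
\[
(x_n+\varepsilon)^\alpha u_{x_nx_n}=u_t-(x_n+\varepsilon)^\alpha\Delta_{x'}u-(x_n+\varepsilon)^\beta f .
\]
I will bound the three terms on the right in $Q_{1/2}^+$ using Lemma~\ref{lem:bound of vt}. That lemma gives $|u_t|\le Cx_n\le C(x_n+\varepsilon)$. The assumptions give $|\Delta_{x'}u|\le C$, so the second term is at most $C(x_n+\varepsilon)^\alpha$. The last term is at most $(x_n+\varepsilon)^\beta$. Since $\beta\le\alpha/2<1<\alpha$ and $x_n+\varepsilon\le 2$, each term is bounded by $C(x_n+\varepsilon)^\beta$. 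Dividing by $(x_n+\varepsilon)^\alpha$ then gives $|u_{x_nx_n}|\le C(x_n+\varepsilon)^{\beta-\alpha}$.

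When $f\equiv0$, the source term disappears. The remaining right-hand side is bounded by $C(x_n+\varepsilon)+C(x_n+\varepsilon)^\alpha\le C(x_n+\varepsilon)$, which yields $|u_{x_nx_n}|\le C(x_n+\varepsilon)^{1-\alpha}$. If the constants need to be adjusted, this can be handled by a covering and rescaling from $Q_{1/2}^+$ in Lemma~\ref{lem:bound of vt} to the current cylinder.

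For part (ii), I will apply Lemma~\ref{lem:bound of vt} again, this time to $w:=u_t$ in place of $u$. This function solves $L_{0,\varepsilon}w=(x_n+\varepsilon)^\beta f_t$ and vanishes on $\partial_pQ_1^+\cap\{x_n=0\}$, because $u$ does. Since the lemma is stated on $Q_1^+$, I will work on nested cylinders $Q_{1/2}^+\subset Q_{3/4}^+\subset Q_1^+$ and rescale as in Section~\ref{sec:scaling}. The hypotheses of Lemma~\ref{lem:bound of vt} for $w$ on $Q_{3/4}^+$ are checked as follows:
\begin{itemize}
\item $|w|\le C$ follows from Lemma~\ref{lem:bound of vt} applied to $u$.
\item $|f_t|,|\nabla f_t|,|f_{tt}|\le1$ hold by assumption.
\item $|w_{x_i}|$ and $|w_{x_n}|$ are bounded by Theorem~\ref{prop:local boundary regularity} applied to the equation for $w$, with source $(x_n+\varepsilon)^\beta f_t$ bounded.
\item $|w_{x_ix_i}|$ for tangential $i$ is bounded by Lemma~\ref{lem:bound of vii} applied to $w$, which uses $|(f_t)_{x_i}|\le1$. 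Alternatively, I can differentiate $w_{x_i}$ tangentially once more, using $|f_{tx_ix_i}|\le 1$ and Theorem~\ref{prop:local boundary regularity}.
\end{itemize}
The conclusion of Lemma~\ref{lem:bound of vt} is then $|w_t|=|u_{tt}|\le C$ in the smaller cylinder.

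The step I expect to be the main obstacle is the bound on $|w_{x_ix_i}|$, the second tangential derivative of $u_t$. This requires a control on $f_{tx_i}$, which is available since $|\nabla f_t|\le1$. The difficulty is that each application of Lemma~\ref{lem:bound of vii} and Theorem~\ref{prop:local boundary regularity} shrinks the cylinder. I will therefore order the shrinkings explicitly, for example $Q_1^+\to Q_{7/8}^+\to Q_{3/4}^+\to Q_{5/8}^+\to Q_{1/2}^+$. Each step uses a scaled version of those results, and every constant must depend only on $n,\alpha,\beta$, never on $\varepsilon$.
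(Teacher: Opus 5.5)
Your proposal is correct and matches the paper's argument. Part (i) is read off from the identity $(x_n+\varepsilon)^\alpha u_{x_nx_n}=u_t-(x_n+\varepsilon)^\alpha\Delta_{x'}u-(x_n+\varepsilon)^\beta f$ using $|u_t|\le C(x_n+\varepsilon)$ and $|\Delta_{x'}u|\le C$, and part (ii) reruns Lemma~\ref{lem:bound of vt} for $u_t$, whose equation has source $(x_n+\varepsilon)^\beta f_t$. Your explicit check of the hypotheses for $u_t$ is more careful than the paper's one-line proof. Note also that $|\nabla f_t|\le 1$ controls the normal derivative $f_{tx_n}$, which enters the barrier computation for $(x_n+\varepsilon)^{\alpha/2}\partial_{x_n}u_t$ inside the proof of Lemma~\ref{lem:bound of vt}.
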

\begin{proof}
Combining the equation
\[
(x_n+\varepsilon)^\al u_{x_nx_n}
=u_t-(x_n+\varepsilon)^\al\Delta_{x'}u
-(x_n+\varepsilon)^\beta f,
\]
with $|u_t|\leq C(x_n+\varepsilon)$,  $|\Delta_{x'}u| \leq C$, we obtain the result.
If $f\equiv 0$, the result follows from the same reason.

For the equation
\[
(u_t)_t-(x_n+\varepsilon)^\al\Delta u_{t}
=(x_n+\varepsilon)^\beta f_t,
\]
we already know that $|u_t|\leq C$, $|f_t|\leq 1$. Then by the proof of Theorem \ref{thm:Existence and uniqueness}, we have $|u_t|\leq M (x_n+\varepsilon)$. By the proof of Lemma \ref{lem:bound of vt}, if $|f_{tt}|< 1$, $|\nabla f_t|\leq 1$ and $|f_{tx_ix_i}|\leq 1$, we obtain $|u_{tt}|<C$.
\end{proof}

\begin{proof}[\textbf{Proof of Theorem \ref{thm:C2 estimate}}]
By Lemma \ref{lem:bound of vii}, Lemma \ref{lem:bound of vt} and Corollary \ref{cor:vt_higher},
we have 
\begin{align*}
&|(Dv_{\varepsilon})_{x_i}|\leq C, \quad \text{for}~i=1,\dots,n-1.\\
& |(v_{\varepsilon})_t|\leq Cx_n,
\quad |(v_{\varepsilon})_{tt}|\leq C,
\quad |(v_{\varepsilon})_{x_nx_n}|\leq C(x_n+\varepsilon)^{\beta-\alpha}.
\end{align*}
Taking $\va\to 0$,  \eqref{eq:f_C3_estimates} follows.
Moreover, we obtain that for  $1\leq i\leq n-1$,  
\begin{align*}
&|(v_\va)_{x_n}(x+\tau e_i,t)-(v_\va)_{x_n}(x,t)|
\leq  \int_{0}^{\tau} \pa_{x_i} (v_\va)_{x_n}(x+se_i,t) ds
\leq  C \tau, \\
&|(v_\va)_{x_n}(x+\tau e_n,t)-(v_\va)_{x_n}(x,t)|
\leq  \int_{0}^{\tau} \pa_{x_n} (v_\va)_{x_n}(x+se_n,t) ds
\leq  C \tau^{\beta-\al+1}, \\
&|(v_\va)_{x_n}(x,t+\tau)-(v_\va)_{x_n}(x,t)|
\leq  \int_{0}^{\tau} \pa_{t} (v_\va)_{x_n}(x,t+ s ) ds
\leq  C \tau.
\end{align*}
This implies that $(v_\varepsilon)_{x_n} \in C(\overline{Q}_{1/2}^+)$. The uniform continuity of the derivatives in the remaining tangential and temporal directions can be established in an analogous manner. Furthermore, since the $L^\infty$ norms and the moduli of continuity of both $Dv_\varepsilon$ and $(v_\varepsilon)_t$ are uniform with respect to $\varepsilon$, passing to the limit yields that the original solution $v$ belongs to $C^1(\overline{Q}_{1/2}^+)$.
\end{proof}

\subsection{Local estimates for variable coefficients via perturbation}\label{sub:variable_coefficients}

In this section, by employing standard perturbation arguments, we extend the local regularity estimates to the variable-coefficient setting. Specifically, we consider the degenerate boundary value problem
\begin{equation}\label{eq:equation_v0 variable}
\begin{cases}
v_t-x_n^\al \left(a^{ij}D_{ij}v+b^i D_i v\right)+cv=x_n^\beta f & \quad \text{in } Q_1^+,
 \\
v = 0 & \quad \text{on } \partial_p Q_1^+ \cap \{x_n = 0\},
\end{cases}
\end{equation}
where we assume that the variable coefficients $a^{ij}, b^i, c$, along with the source term $f$, belong to the H\"older space $\C^\gamma(Q_1^+)$ and satisfy the uniform structure conditions \eqref{eq:structure_conditions}.

\begin{lem}[Approximation Lemma]\label{lem:solution approximation lem}
Let $0<\gamma<\gamma^*$, and let $v$ be a solution of \eqref{eq:equation_v0 variable}. Suppose that
\be\label{eq:assumption of approximation lemma}
\left\|a^{ij}-\ol{a}\delta_{ij}\right\|_{L^\infty(Q_1^+)}
+\left\|b^i\right\|_{L^\infty(Q_1^+)}
+\left\|c\right\|_{L^\infty(Q_1^+)}
\leq \varepsilon
\ee
for some $\varepsilon\in(0,1)$ and some constant $\ol{a}$ satisfying $\lambda\leq \ol{a}\leq \lambda^{-1}$. Let $h\in C^\infty(Q_{3/4}^+)\cap C(\ol{Q}_{3/4}^+)$ be the solution of
\[
\left\{
\begin{array}{ll}
h_t-x_n^\al \ol{a}\Delta h=x_n^\beta f(0,1) & \text{in } Q_{3/4}^+,\\
h=v & \text{on } \pa_p Q_{3/4}^+.
\end{array}
\right.
\]
then
\[
\|v-h\|_{L^\infty(Q_{1/2}^+)}\leq C \left( \varepsilon^\theta+\|f\|_{L^\infty(Q_1^+)}\right),
\]
for some constants $\theta\in (0,1)$ and $C>0$ depending only on $n,\al,\beta,\gamma,\lambda$.
\end{lem}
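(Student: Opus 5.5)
We compare $v$ and $h$ through the difference $w:=v-h$ on $Q_{3/4}^+$. It satisfies $w=0$ on $\partial_pQ_{3/4}^+$ and
\[
w_t-x_n^\alpha\,\ol a\,\Delta w
= x_n^\alpha\bigl(a^{ij}-\ol a\delta_{ij}\bigr)D_{ij}v+x_n^\alpha b^iD_iv-cv+x_n^\beta\bigl(f-f(0,1)\bigr).
\]
The last term is bounded by $2x_n^\beta\|f\|_{L^\infty}$, so Theorem \ref{thm:Existence and uniqueness} with zero boundary data controls its contribution by $C\|f\|_{L^\infty}$ (normalizing $\|v\|_{L^\infty(Q_1^+)}\le 1$ as usual). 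The terms $-cv$ and $x_n^\alpha b^iD_iv$ are $O(\varepsilon)$, using the Lipschitz bound $|Dv|\le C$ from Theorem \ref{prop:local boundary regularity}; that theorem needs $a^{ij}\in\C^\gamma$, which is among the standing assumptions of \eqref{eq:equation_v0 variable}. The delicate term is $x_n^\alpha(a^{ij}-\ol a\delta_{ij})D_{ij}v$, because we have no uniform bound on $x_n^\alpha D^2v$ up to the boundary.

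To handle it, we split $Q_{3/4}^+$ into a boundary layer $\{x_n<\rho\}$ and its complement, with $\rho=\varepsilon^{\theta_1}$ to be optimized.

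In the layer we avoid second derivatives altogether. By Theorem \ref{thm:local boundary regularity}, $|v|\le Cx_n$. The same theorem, applied to the constant-coefficient problem for $h$ (whose boundary data $v$ vanishes on $\{x_n=0\}$ and is Lipschitz there), gives $|h|\le Cx_n$. Hence $|w|\le C\rho$ in $\{x_n<\rho\}$.

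Away from the layer, where $x_n\ge\rho$, we rescale at the boundary point below $(x,t)$ with $r=x_n$ as in \eqref{eq:scale at x_n=0}. The equation becomes uniformly parabolic on $Q_{3/4}(0,1,1)$, and the rescaled coefficients are Hölder with $[a_r]_{C^\gamma}\lesssim r^{\gamma(1-\alpha/2)}\le C$. Interior Schauder (Theorem \ref{thm:standard_interior_boundary_estimates}(ii)) together with $|v|\le Cx_n$ then gives
\[
x_n^\alpha|D^2v|\le C\,x_n^{\alpha-1}\cdot x_n^{\,?}\le C
\]
(more precisely $|D^2v|\le Cx_n^{1-\alpha}+Cx_n^{\beta-\alpha}$). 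So on $\{x_n\ge\rho\}$ the right-hand side is bounded by
\[
C\varepsilon\bigl(x_n+x_n^{\beta}\bigr)\le C\varepsilon,
\]
and in any case by $C\varepsilon\rho^{\beta-\alpha}$ near the layer edge.

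Next we run a comparison on $\{x_n>\rho\}\cap Q_{3/4}^+$. The boundary values of $w$ are $0$ on $\partial_pQ_{3/4}^+$ and at most $C\rho$ on $\{x_n=\rho\}$. We use the barrier $C\rho+C\varepsilon\,\Phi$, where $\Phi$ is the supersolution $e^{\Lambda t}(x_n-x_n^{1+\tilde\beta})$-type function built in Step 2 of Theorem \ref{thm:Existence and uniqueness}; it satisfies $L\Phi\ge x_n^{\beta}$, or alternatively we take $e^{t}$ to absorb a bounded source. The maximum principle then yields
\[
|w|\le C\bigl(\rho+\varepsilon\rho^{-\kappa}\bigr)+C\|f\|_{L^\infty}
\]
for some $\kappa\ge0$ depending on $\alpha,\beta$. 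Choosing $\rho=\varepsilon^{1/(1+\kappa)}$ gives the claim with $\theta=1/(1+\kappa)$.

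The main obstacle is the interior second-derivative bound for $v$ uniformly as $x_n\to0$: the coefficient term must be at most $\varepsilon\,x_n^{-\kappa}$ with $\kappa$ finite, so that it can be absorbed. If that bound degrades too much, the fallback is to use the $C^{\gamma_0}$-compactness of Theorem \ref{thm:holder estimates for general coefficients} in a contradiction argument. That route gives $\|v-h\|\to0$ as $\varepsilon\to0$ but no explicit rate; the explicit layer-and-barrier argument above is what produces the power $\varepsilon^\theta$.
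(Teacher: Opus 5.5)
There is a gap, and it sits exactly where your route departs from the paper's. You write the equation for $w=v-h$ with the operator of $h$, so the error term is $x_n^\alpha(a^{ij}-\ol a\delta_{ij})D_{ij}v$. The paper instead uses the operator of $v$, so its error is $x_n^{\alpha}(a^{ij}-\ol a\delta_{ij})D_{ij}h+x_n^\alpha b^iD_ih-ch$, and the second derivatives land on $h$.

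To bound $x_n^\alpha D^2v$ you apply \eqref{eq:classical schauder} to the rescaled $v_r$. Its right-hand side is $r^\beta y_n^\beta f_r$, so what you actually get is $x_n^\alpha|D^2v|\le C\bigl(x_n^{\alpha-1}+x_n^{\beta}\|f\|_{\C^\gamma(Q_1^+)}\bigr)$ (normalizing $\|v\|_{L^\infty}\le 1$). The first term is $x_n^{\alpha-1}$, because $\|v_r\|_{L^\infty}\le Cr^{\alpha-1}$; it is not $x_n$ as in your ``more precisely'' line. Hence your argument proves
\[
\|v-h\|_{L^\infty(Q_{1/2}^+)}\le C\bigl(\varepsilon(1+\|f\|_{\C^\gamma})+\|f\|_{L^\infty}\bigr).
\]
The lemma, however, has only $\|f\|_{L^\infty}$ on the right, with $C=C(n,\alpha,\beta,\gamma,\lambda)$. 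Better bookkeeping cannot fix this: second derivatives of $v$ are not controlled pointwise by the supremum of a merely bounded source.

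The paper avoids the problem because $h$ has the constant source $x_n^\beta f(0,1)$. Its second-derivative bounds (Theorem \ref{thm:C2 estimate} plus rescaling) therefore involve only $|f(0,1)|$. The price is that $h$ has merely H\"older data on $\partial_pQ_{3/4}^+$, so $D^2h$ blows up there. The paper therefore:
\begin{itemize}
\item works in $Q_{3/4-\delta}^+$ with $|\ol{x}_n^{\alpha-\beta}D^2h|\le C\delta^{(1-\alpha/2)\gamma-2-\beta+\alpha}$;
\item controls $v-h$ on $\partial_pQ_{3/4-\delta}^+$ by the H\"older modulus of $v-h$;
\item balances the two contributions, which is where $\theta<1$ comes from.
\end{itemize}

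Your route can be repaired, and it then becomes simpler than the paper's. Split off the source. Let $v_2$ solve \eqref{eq:equation_v0 variable} in $Q_1^+$ with right-hand side $x_n^\beta f$ and zero data on $\partial_pQ_1^+$ (Theorem \ref{thm:Existence and uniqueness}), and set $v_1=v-v_2$. Split $h=h_1+h_2$ correspondingly: $h_1$ has zero source and data $v_1$, and $h_2$ has source $x_n^\beta f(0,1)$ and data $v_2$. Then $|v_2|+|h_2|\le C\|f\|_{L^\infty}$ by the maximum principle. Meanwhile $v_1$ solves the homogeneous equation in the larger cylinder $Q_1^+$. Theorem \ref{thm:local boundary regularity}, Theorem \ref{prop:local boundary regularity} and your rescaled Schauder argument give $x_n^\alpha|D^2v_1|\le Cx_n^{\alpha-1}$ and $|Dv_1|\le C$ on all of $Q_{3/4}^+$. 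These hold uniformly up to $\{x_n=0\}$, and also up to the lateral and initial parts of $\partial_pQ_{3/4}^+$, since the rescaled cylinders stay inside $Q_1^+$. The maximum principle then gives $|v_1-h_1|\le C\varepsilon$, so $\theta=1$.

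The cost relative to the paper is that your constant depends on the H\"older norms of $a^{ij},b^i,c$ through the interior Schauder estimate. The paper uses these coefficients only through \eqref{eq:assumption of approximation lemma} and the H\"older bound for $v$.

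Your diagnosis of the ``main obstacle'' is also off. Since $\alpha>1$, the coefficient term is bounded up to $\{x_n=0\}$, so the layer $\{x_n<\rho\}$, the unspecified $\kappa$ and the compactness fallback are all unnecessary. If you keep the layer anyway, note that $|h|\le Cx_n$ up to $\partial_pQ_{3/4}^+$ must come from Corollary \ref{coro:boundary holder} with $\sigma=1$. Theorem \ref{thm:local boundary regularity} only gives it on a smaller cylinder.
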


\begin{proof}
The existence and uniqueness of $h$ follow directly from Theorem \ref{thm:Existence and uniqueness}. Furthermore, in Section \ref{sec: boundary holder regularity}, the following estimates have been established:
\[
\left\|v\right\|_{\C^{\gamma}(Q_{3/4}^+)}\leq C \left(1+\left\|f\right\|_{L^\infty(Q_{3/4}^+)}\right),\quad |h(x,t)|\leq C \left(1+\left\|f\right\|_{L^\infty(Q_{3/4}^+)}\right)x_n^\beta 
\]
and
\[
\left\|h\right\|_{\C^{\gamma}(Q_{3/4}^+)}\leq C\left(\left\|h\right\|_{L^\infty(Q_{3/4}^+)}+\left\|v\right\|_{\C^\gamma(Q_{3/4}^+)}+\left\|f\right\|_{L^\infty(Q_{3/4}^+)}\right)\leq C\left(1+\left\|f\right\|_{L^\infty(Q_1^+)}\right).
\] 
Since $v - h = 0$ on $\partial_{p} Q_{3/4}^+$, for any small constant $\delta \in (0, 1/4)$,  we have
\[
\|v-h\|_{L^\infty(\partial_{p} Q_{3/4-\delta}^+)}\leq \left\|v-h\right\|_{\C^\gamma(\ol{Q}_{3/4}^+)}\delta^\gamma\leq C\delta^\gamma \left(1+\left\|f\right\|_{L^\infty(Q_1^+)}\right).
\]
Moreover, for all $(\ol{x},\ol{t})\in Q_{3/4-\delta}^+$,  we consider 
the rescaled function
\begin{align*}
&k(x,t)=\frac{h(\delta x+\ol{x},\delta^{2-\al}(t-1)+\ol{t})-h(\ol{x},\ol{t})}{\delta^{(1-\frac{\al}{2})\gamma}}, \quad  \text{if } \ol{x}_n\leq \frac{\delta}{2},\\
&k(x,t)=\frac{h(\delta x+\ol{x},\delta^{2}\ol{x}_n^{-\al}(t-1)+\ol{t})-h(\ol{x},\ol{t})}{\delta^{(1-\frac{\al}{2})\gamma}},\quad   \text{if }\ol{x}_n\geq \frac{\delta}{2}, 
\end{align*}
then, $k$ satisfies
\[
k_t - \overline{a} \left(x_n + \frac{\overline{x}_n}{\delta}\right)^\alpha \Delta k = 
\delta^{2-\alpha-(1-\frac{\alpha}{2})\gamma+\beta} (x_n+\frac{\ol{x}_n}{\delta} )^\beta f(0,1) \quad \text{in } Q_1^+~
\]
when $\overline{x}_n \leq \frac{\delta}{2}$ and
\[
k_t - \overline{a}\left( \frac{\delta}{\ol{x}_n}x_n + 1\right)^\alpha \Delta k = 
\delta^{2-(1-\frac{\alpha}{2})\gamma} \ol{x}_n^{\beta-\al} (\frac{\delta}{\ol{x}_n}x_n + 1)^\beta f(0,1) \quad \text{in } Q_1
\]
when $\overline{x}_n \geq \frac{\delta}{2}$.
Notice that
\[
\|k\|_{L^\infty}
\leq C\|h\|_{\mathscr{C}^\gamma(Q_{3/4}^+)}
\leq C\bigl(1+\|f\|_{L^\infty(Q_1^+)}\bigr).
\]
Combining this estimate with Theorem \ref{thm:C2 estimate} in the case $\overline{x}_n\leq \frac{\delta}{2},$
and with the classical parabolic estimates in the case $\overline{x}_n\geq \frac{\delta}{2},$
then applying a standard scaling argument, we obtain the following weighted bounds for the derivatives of $h$ at $(\overline{x},\overline{t})$,
\[
|\ol{x}_n^{\al-\beta}D^2 h(\ol{x},\ol{t})|\leq C \left( 1+\left\|f\right\|_{L^\infty(Q_1^+)}\right) \delta^{(1-\frac{\al}{2})\gamma-2-\beta+\al}.
\]
Similarly, by Theorem \ref{prop:local boundary regularity} and the scaling argument, we obtain that
\[
|Dh(\ol{x},\ol{t})|\leq C \left( 1+\left\|f\right\|_{L^\infty(Q_1^+)}\right) \delta^{(1-\frac{\al}{2})\gamma-1}.
\]
Let $u=v-h$, then $u$ satisfies that
\[
u_t-x_n^\al \left(a^{ij}D_{ij}u+b^i D_i u\right)+cu =x_n^\beta \wt{f},
\]
where 
\[
\wt{f}= f-f(0,1)+x_n^{\al-\beta}(a^{ij}-\ol{a}\delta_{ij})D_{ij}h+x_n^{\al-\beta} b^i D_ih-ch x_n^{-\beta}.
\]
By previous estimates and \eqref{eq:assumption of approximation lemma}, it follows that
\[
\|\widetilde{f}\|_{L^\infty(Q_{3/4-\delta}^+)} \leq 2\|f\|_{L^{\infty}(Q_1^+)}+\varepsilon \left( 1+\left\|f\right\|_{L^\infty(Q_1^+)}\right) \delta^{(1-\frac{\alpha}{2})\gamma-2-\beta+\alpha}.
\]
Thus, by the Maximum Principle,  if we choose $\delta=\varepsilon^{\frac{2}{4-2\al+\al\gamma+2\beta}}$ and $\theta=\frac{2\gamma}{4-2\al+\al\gamma+2\beta}$, we have
\[
\left\|v-h\right\|_{L^\infty(Q_{3/4-\delta}^+)}
\leq  C \left(\left\|v-h\right\|_{L^\infty(\pa_p Q_{3/4-\delta}^+)}
+\|\wt{f}\|_{L^\infty(Q_{3/4-\delta}^+)}\right)
\leq  C\left(\varepsilon^\theta+\left\|f\right\|_{L^\infty(Q_1^+)}\right).
\]
\end{proof}

\begin{thm}\label{thm:polynomial approximation}
Let $0<\gamma<\gamma^*$,  and let $v$ be a solution to \eqref{eq:equation_v0 variable}. We further assume
\[
\left\|a^{ij}\right\|_{\C^\gamma(Q_{1}^+)}+\|b^i\|_{\C^\gamma(Q_{1}^+)}+\|c\|_{\C^\gamma(Q_1^+)}+\|f\|_{\C^\gamma(Q_1^+)}\leq \frac{1}{\lambda}.
\]  
Then, for each $z_0=(x_0,t_0)\in \partial_p Q_{1/2}^+\cap\{x_n=0\}$, there exists a polynomial of the form
\[
P_{z_0}(x,t)=a(z_0)x_n-\frac{f(z_0)}{a^{nn}(z_0)(2-\al+\beta)(1-\al+\beta)}x_n^{2-\al+\beta},
\]
where $a(z_0)$ is a constant satisfying $|a(z_0)|\leq C$, which will later be identified with the normal derivative $v_n(z_0)$, such that
\begin{equation}\label{eq:approximated polynomial}
|v(x,t)-P_{z_0}(x,t)|\leq C\left(s[z,z_0]^{\frac{2}{2-\al}}\right)^{2-\al+\beta+(1-\frac{\al}{2})\gamma}\quad \text{in } Q_{1/4}^+(z_0).
\end{equation}
Here $C>0$ depends only on $n$, $\alpha$, $\beta$, $\gamma$, and $\lambda$.
\end{thm}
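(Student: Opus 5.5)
The proof is an iterative polynomial approximation at geometric scales, driven by the Approximation Lemma \ref{lem:solution approximation lem} and Proposition \ref{lem:approximation lem}. Fix $z_0=(x_0',0,t_0)$ and translate it to $(0,1)$. By a linear change of variables in $x$ (constant, depending on $a^{ij}(z_0)$, which preserves $\{x_n=0\}$ and multiplies $x_n$ by a bounded factor), we may assume $a^{ij}(z_0)=\ol a\,\delta_{ij}$ with $\ol a=a^{nn}(z_0)$ up to the normalization. Set $\rho\in(0,1/4)$ small, $r_k=\rho^k$, and
\[
\mu:=2-\al+\beta+(1-\tfrac{\al}{2})\gamma,
\]
which is the target homogeneity. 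Note that $s[z,z_0]^{2/(2-\al)}\approx r$ on $\partial_pQ_r^+(z_0)$, so \eqref{eq:approximated polynomial} says $\|v-P_{z_0}\|_{L^\infty(Q_r^+(z_0))}\le Cr^{\mu}$.

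The inductive claim is that there are constants $a_k$ with $|a_k-a_{k-1}|\le Cr_k^{\mu-1}$ such that, for
\[
P_k=a_kx_n-\frac{f(z_0)}{\ol a(2-\al+\beta)(1-\al+\beta)}x_n^{2-\al+\beta},
\]
we have $\|v-P_k\|_{L^\infty(Q_{r_k}^+)}\le r_k^{\mu}$ after normalizing $\|v\|_\infty$ and the data to be small. Since $\gamma<\gamma^*$, we have $\mu<1$, so $\sum r_k^{\mu-1}$ does not converge. I would therefore instead track $a_kr_k$, which gives a telescoping sum $\sum r_k^{\mu}$ on the scale $r_k$. The bound $|a_k|\le C$ must then come from the Lipschitz bound of Theorem \ref{prop:local boundary regularity} and not from summation: $a_k$ is essentially the normal derivative, and the Lipschitz estimate caps it uniformly.

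For the inductive step, let $w=(v-P_k)$ rescaled to $Q_1^+$ via \eqref{eq:scale at x_n=0} at scale $r_k$, divided by $r_k^{\mu}$. Because $P_k$ solves the frozen equation $\partial_t-x_n^\al\ol a\Delta$ with right-hand side $x_n^\beta f(z_0)$, $w$ solves an equation of the form \eqref{eq:equation_v0 variable}. Its coefficients are perturbed by at most $r_k^{(1-\al/2)\gamma}$ (Hölder continuity in $s$ together with \eqref{eq:scaled coefficients}; the $b^i,c$ terms carry extra powers $r_k$, $r_k^{2-\al}$). The source $\tilde f$ collects $f-f(z_0)$ and the terms $(a^{ij}-\ol a\delta_{ij})D_{ij}P_k$, $b^iD_iP_k$, $cP_k$, divided by the appropriate powers. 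These are of size $r_k^{(1-\al/2)\gamma}$ times the scaling factor, and the exponent $\mu$ is chosen precisely so that they are $O(1)$ or small. Here one uses $D^2P_k\sim x_n^{\beta-\al}$, which matches the weight $x_n^\beta$, and $|a_k|\le C$ to control $b^iD_iP_k$ and $cP_k$. Applying Lemma \ref{lem:solution approximation lem} gives a solution $h$ of the frozen constant-coefficient problem with $\|w-h\|_{L^\infty(Q_{1/2}^+)}\le C(\epsilon^\theta+\|\tilde f\|_\infty)$. Proposition \ref{lem:approximation lem} (applied with $\ol a$, after rescaling the constant) then gives $\|h-h_n(0,1)x_n-c\,\tilde f(0,1)x_n^{2-\al+\beta}\|_{L^\infty(Q_\rho^+)}\le C\rho^{3-\al}$. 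Since $3-\al>\mu$, we can choose $\rho$ small so that $C\rho^{3-\al}\le\frac12\rho^{\mu}$, and then take $\epsilon$ and the smallness of the data so that the remaining error is at most $\frac12\rho^\mu$. The $x_n$ coefficient updates $a_{k+1}$, and the $x_n^{2-\al+\beta}$ correction is absorbed, since the source coefficient is fixed to $f(z_0)$ and $\tilde f(0,1)=0$ by construction. Scaling back closes the induction.

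The main obstacle is the bookkeeping of the perturbation terms in $\tilde f$, specifically making sure that $x_n^{\al-\beta}(a^{ij}-\ol a\delta_{ij})D_{ij}P_k$ gains the factor $r_k^{(1-\al/2)\gamma}$ uniformly in $k$. This requires the Hölder modulus of $a^{ij}$ in the intrinsic metric: $|a^{ij}(z)-a^{ij}(z_0)|\le Cs[z,z_0]^\gamma\le Cr_k^{(1-\al/2)\gamma}$ on $Q_{r_k}^+$. It also requires that the smallness of $\|\tilde f\|_{L^\infty}$ demanded by Lemma \ref{lem:solution approximation lem} holds at every scale. If uniform boundedness of $a_k$ fails through the sum, I would fall back on Theorem \ref{prop:local boundary regularity} applied to $v$ to bound $|a_k|\le |Dv|+C\le C$ directly. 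Finally, a standard interpolation between the dyadic scales gives \eqref{eq:approximated polynomial} for all $z\in Q_{1/4}^+(z_0)$, with $a(z_0)=\lim a_k$.
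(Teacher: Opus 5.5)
Your scheme is the same as the paper's. You freeze at $z_0$, normalize $a^{ij}(z_0)$ by a linear map fixing $\{x_n=0\}$, and iterate at scales $\rho^k$ using Lemma \ref{lem:solution approximation lem} and Proposition \ref{lem:approximation lem}; freezing $f(z_0)$ in $P_k$ makes $\tilde f(0,1)=0$. However, your treatment of the coefficients $a_k$ rests on a false computation. You claim $\gamma<\gamma^*$ forces $\mu<1$. In fact $\beta>\alpha-1$ gives $2-\alpha+\beta>1$, so $\mu=2-\alpha+\beta+(1-\frac{\alpha}{2})\gamma>1$ for every $\gamma>0$. This is also why the linear term $a(z_0)x_n$ in $P_{z_0}$ carries information at all. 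What $\gamma<\gamma^*$ actually gives is $(1-\frac{\alpha}{2})\gamma<1-\alpha+\beta$, hence $\mu<3-2\alpha+2\beta\le 3-\alpha$, using $\beta\le\alpha/2$. This strict inequality, not $\mu<1$, is why you can absorb the $C\rho^{3-\alpha}$ error from Proposition \ref{lem:approximation lem}.

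Because of this error you drop the summation and fall back on the Lipschitz bound, which does not give what you need. From $\|v-P_k\|_{L^\infty(Q^+_{r_k})}\le r_k^{\mu}$ and $|Dv|\le C$, evaluating at a point with $x_n\approx r_k$ yields only $|a_k|\le C+Cr_k^{1-\alpha+\beta}+Cr_k^{\mu-1}$. That bound is uniform in $k$ only when $\mu\ge 1$, so in the regime you believed you were in, the fallback fails. It also gives no convergence of $a_k$. You need convergence both to define $a(z_0)=\lim a_k$ and, at the end, to replace $P_k$ by the limit $P$ on $Q^+_{r_k}$ at cost $r_k|a_k-a_\infty|\le Cr_k^{\mu}$.

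With the correct fact $\mu>1$, the iteration gives $a_{k+1}-a_k=r_k^{\mu-1}\tilde a$ with $|\tilde a|\le C$, and this is a convergent geometric series. It yields three things:
\begin{itemize}
\item $|a_k|\le C(\rho)$ uniformly in $k$. You need this before fixing $\epsilon$, because the $b^nD_nP_k$ and $cP_k$ parts of $\tilde f$ have size $C\epsilon(1+|a_k|)$.
\item The limit $a_\infty$ exists.
\item $r_k\sum_{j\ge k}|a_{j+1}-a_j|\le C r_k^{\mu}$, which lets you replace $P_k$ by $P$.
\end{itemize}
This is exactly how the paper concludes.

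One further point should be made explicit. The Approximation Lemma needs $\|a^{ij}-\overline{a}\delta_{ij}\|_{L^\infty}+\|b^i\|_{L^\infty}+\|c\|_{L^\infty}\le\epsilon$ already at $k=0$, and dividing $v$ by a constant cannot provide this. The paper first rescales at a small scale $r_0$ around $z_0$, so that $[a^{ij}]_{\C^\gamma}$, $\|b^i\|_{\C^\gamma}$ and $\|c\|_{\C^\gamma}$ are at most $\epsilon$. It then divides $v$ by a constant so that $\|v\|_{L^\infty}\le 1$ and $\|f\|_{\C^\gamma}\le\epsilon$.
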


\begin{proof}
Up to a translation, we may assume without loss of generality that $z_0 = (0,1)$. Moreover, by replacing $v$ with a suitably scaled function $v_{r_0}$ centered at $(0,1)$ for a sufficiently small radius $r_0 > 0$—which we still denote by $v$ for simplicity—we may assume that the coefficients satisfy 
\[
[a^{ij}]_{\C^\gamma(Q_1^+)} + \|b^i\|_{\C^\gamma(Q_1^+)} + \|c\|_{\C^\gamma(Q_1^+)} \leq \varepsilon,
\]
for a sufficiently small constant $\varepsilon > 0$.
Furthermore, without loss of generality, we may also assume that
\[
\|v\|_{L^\infty(Q_1^+)} \leq 1 \quad \text{and} \quad \|f\|_{\C^\gamma(Q_1^+)} \leq \varepsilon.
\]

We first simplify the coefficient matrix. Set
\[
A_{z_0}=\{a^{ij}(z_0)\}_{n\times n}.
\]
Since $A_{z_0}$ is symmetric and uniformly positive definite, there exists a sliding transformation 
\[
\wt{x}=S_{z_0}x
\]
where $S_{z_0}$ is an upper triangular matrix, such that
\[
S_{z_0} A_{z_0} S_{z_0}^T=I.
\]
More precisely, $S_{z_0}$ can be chosen so that $y_n=x_n$. Hence, the flat boundary $\{x_n=0\}$ is mapped onto $\{y_n=0\}$, and the degeneracy factor $x_n^\alpha$ is transformed into $y_n^\alpha$. Therefore, after this linear change of variables, we may assume that
\[
a^{ij}(z_0)=a^{nn}(z_0)\delta_{ij}.
\]

Note that the above simplification does not alter the form of \eqref{eq:approximated polynomial}.
Next, we show that there exist $0<r<1/2$ and a sequence of polynomials
\[
P_k(x,t)=a_k\cdot x_n-\frac{f(0,1)}{a^{nn}(0,1)(2-\al+\beta)(1-\al+\beta)}x_n^{2-\al+\beta}
\]
such that for every $k\ge 0$,
\[
\|v-P_k\|_{L^\infty(Q_{r^k}^+)}\leq  r^{k\left(2-\al+\beta+\left(1-\frac{\al}{2}\right)\gamma\right)},
\]
and for every $k\ge 1$,
\[
r^{k-1}|a_k-a_{k-1}|\leq Cr^{(k-1)\left(2-\al+\beta+\left(1-\frac{\al}{2}\right)\gamma\right)},
\]
where we set $a_{-1}=a_0= 0$.

The case $k=0$ is immediate. Assume that the above estimates hold for $k=0,1,\dots,l$, we show that they also hold for $k=l+1$.
Consider the function
\[
\wt{v}(x,t)=\frac{(v-P_l)(r^l x,r^{(2-\al)l}(t-1)+1)}{r^{l(2-\al+\beta+(1-\frac{\al}{2})\gamma)}}.
\]
Then $\left\| \wt{v}\right\|_{L^\infty(Q_1^+)}\leq 1$ and it satisfies that
\[
\wt{v}_t-x_n^\al \left(\wt{a}^{ij}D_{ij}\wt{v}+\wt{b}^iD_i \wt{v}\right)+\wt{c}\wt{v}=x_n^\beta \wt{f},
\]
where $\wt{a}^{ij}, \wt{b}^i$, $\wt{c}$ are defined similar to \eqref{eq:scaled coefficients} and 
\begin{align*}
&\wt{f}(x,t)=\frac{1}{a^{nn}(0,1)r^{l(1-\frac{\al}{2})\gamma}}\left(a^{nn}(0,1)f(r^l x,r^{(2-\al)l}(t-1)+1)-\wt{a}^{nn}f(0,1)\right)\\
&+r^{l(\al-\beta-(1-\frac{\al}{2})\gamma)}x_n^{\al-\beta} b^{n}(r^l x,r^{(2-\al)l}(t-1)+1) (a_l-\frac{f(0,1)}{a^{nn}(0,1)(1-\al+\beta)}(r^l x_n)^{1-\al+\beta})\\
&- c(a_l r^{l(1-\beta-(1-\frac{\al}{2})\gamma)}x_n^{1-\beta}-\frac{f(0,1)}{a^{nn}(0,1)(1-\al+\beta)(2-\al+\beta)}r^{l(2-\al-(1-\frac{\al}{2})\gamma)}x_n^{2-\al})
\end{align*}
satisfying $\wt{f}(0,1)=0$. Since
\[
\left\|f(r^l\cdot, r^{(2-\al)l}(\cdot-1)+1)-f(0,1)\right\|_{L^{\infty}(Q_1^+)}  \leq \left\|f\right\|_{\C^\gamma(Q_1^+)} r^{l(1-\frac{\alpha}{2})\gamma}\leq  \varepsilon r^{l(1-\frac{\alpha}{2})\gamma},
\]
and 
\[
\left\|\widetilde{a}^{ij}-a^{nn}(0,1)\delta_{ij}\right\|_{L^{\infty}(Q_1^+)} \leq [a^{ij}]_{\C^\gamma(Q_1^+)} r^{l(1-\frac{\alpha}{2})\gamma}\leq  \varepsilon r^{l(1-\frac{\alpha}{2})\gamma}  ,
\]
we have
\[
\|\wt{f}\|_{L^\infty(Q_1^+)}\leq C \varepsilon.
\] 

Applying Lemma \ref{lem:solution approximation lem} to $\wt{v}$, there exists a function $h:=h_k\in C^\infty(Q_{3/4}^+)\cap C(\ol{Q}_{3/4}^+)$ satisfying that
\[
\left\{
\begin{array}{ll}
h_t-x_n^\al (a^{nn}(0,1)\Delta h)= 0
&\text{in}~Q_{3/4}^+,\\
h=\wt{v}& \text{on}~\pa_{p} Q_{3/4}^+,
\end{array}
\right.
\]
such that
\[
\|\wt{v}-h\|_{L^\infty(Q_{1/2}^+)}\leq C\varepsilon^\theta.
\]
By Proposition \ref{lem:approximation lem}, there exists a polynomial
\[
\wt{P}(x,t)=\wt{a}\cdot x_n
\]
such that
\[
\left\|h-\wt{P}\right\|_{L^\infty(Q_r^+)}\leq C r^{3-\al}\left\|h\right\|_{L^\infty(Q_1^+)}.
\]
Therefore,
\begin{align*}
\left\|\wt{v}-\wt{P}\right\|_{L^\infty(Q_r^+)}&\leq \left\|\wt{v}-h\right\|_{L^\infty(Q_r^+)}+\left\|h-\wt{P}\right\|_{L^\infty(Q_r^+)}\\
& \leq C \varepsilon^\theta+ C r^{3-\al}\\
&\leq  r^{2-\al+\beta+(1-\frac{\al}{2})\gamma},
\end{align*}
for $r$ sufficiently small and then $\varepsilon$ accordingly, since $2-\al+\beta+(1-\al/2)\gamma \leq 3-\al$.
After scaling back, we have
\[
\left|v(x,t)-P_l(x,t)-r^{l(2-\al+\beta+(1-\frac{\al}{2})\gamma)}\wt{P}\left(\frac{x}{r^l},\frac{t-1}{r^{l(2-\al)}}+1\right)\right|\leq  r^{(l+1)(2-\al+\beta+(1-\frac{\al}{2})\gamma)}
\]
for any $(x,t)\in Q_{r^{l+1}}^+$. This implies
\[
P_{l+1}(x,t)=P_l(x,t)+r^{l(2-\al+\beta+(1-\frac{\al}{2})\gamma)}\wt{P}(\frac{x}{r^l},\frac{t-1}{r^{l(2-\al)}}+1)
\]
and
\[
r^{l}|a_{l+1}-a_{l}|\leq C r^{l(2-\al+\beta+(1-\frac{\al}{2})\gamma)}.
\]
With this inequality, it follows that $a_k$ converge to $a_{\infty}$ and then $P_k$ converges to
\[
P(x,t)=a_\infty\cdot x_n-\frac{f(0,1)}{a^{nn}(0,1)(2-\al+\beta)(1-\al+\beta)}x_n^{2-\al+\beta},
\]
which satisfies
\begin{align*}
|P_k(x,t)-P(x,t)|&\leq C|x||a_k-a_\infty|\\
&\leq C |x|r^{k(2-\al+\beta+(1-\frac{\al}{2})\gamma)-k}\\
& \leq C r^{k(2-\al+\beta+(1-\frac{\al}{2})\gamma)}
\end{align*}
for all $(x,t)\in Q_{r^k}^+$. Therefore, for all $(x,t)\in Q_{r^k}^+$, we have
\[
|v(x,t)-P(x,t)|\leq |v(x,t)-P_k(x,t)|+|P_k(x,t)-P(x,t)|\leq C r^{k(2-\al+\beta+(1-\frac{\al}{2})\gamma)},
\]
which implies that
\[
|v(x,t)-P(x,t)| \leq C\left(|x|+|t-1|^{\frac{1}{2-\al}}\right)^{2-\al+\beta+(1-\frac{\al}{2})\gamma}
\]
for $(x,t)\in Q_{1/2}^+$. 
Since the choice of $z_0$ is arbitrary and the estimates are independent of the choice of $z_0$, it follows that there exist polynomials $P_{z_0}$ such that
\[
|v(x,t)-P_{z_0}(x,t)|\leq C \bigl(s[z,z_0]^\frac{2}{2-\al}\bigr)^{2-\al+\beta+(1-\frac{\al}{2})\gamma}
\]
for all $z=(x,t)\in Q_{1/4}^+(z_0)$.

\end{proof}

\begin{thm}\label{thm:generalized aij regularity }
Let $0<\gamma<\gamma^*$,  and let $v$ be a solution to \eqref{eq:equation_v0 variable}. We further assume
\[
\left\|a^{ij}\right\|_{\C^\gamma(Q_{1}^+)}+\|b^i\|_{\C^\gamma(Q_{1}^+)}+\|c\|_{\C^\gamma(Q_1^+)}\leq \frac{1}{\lambda}.
\]  
Then we have $v \in \C^{2+\gamma}(\overline{Q}_{1/2}^+)$ with
\[
\|v\|_{\C^{2+\gamma}(Q_{1/2}^+)} \leq C \left( \|v\|_{L^\infty(Q_1^+)} + \|f\|_{\C^\gamma(Q_1^+)} \right),
\]
where $C>0$ is a constant depending only on $n, \alpha, \beta, \gamma$, and $\lambda$. 
Moreover,  the quantity $x_n^{-\beta}v_t$ and $x_n^{\alpha-\beta}D^2 v$ are H\"older continuous up to the flat boundary, and their boundary traces satisfy
\[
x_n^{-\beta}v_t=0,\quad   x_n^{\alpha-\beta}D^2 v+ \operatorname{diag}\{0,\cdots,\frac{f}{a^{nn}}\}=0\quad \text{on } \partial_p Q_{1/2}^+ \cap \{x_n=0\}.
\] 
\end{thm}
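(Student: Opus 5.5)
The plan is to combine the boundary polynomial expansion of Theorem \ref{thm:polynomial approximation} with interior Schauder estimates applied after intrinsic rescaling, and then to patch boundary and interior information together using Lemma \ref{lem:euclidean_patching}. We normalize so that $\|v\|_{L^\infty(Q_1^+)}+\|f\|_{\C^\gamma(Q_1^+)}\le 1$.

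\textbf{Step 1: boundary polynomials.} For every $z_0\in\partial_pQ_{1/2}^+\cap\{x_n=0\}$, Theorem \ref{thm:polynomial approximation} gives a polynomial $P_{z_0}$ with
\[
|v-P_{z_0}|\le C\rho^{\mu},\qquad \rho:=s[z,z_0]^{\frac{2}{2-\al}},\qquad \mu:=2-\al+\beta+(1-\tfrac{\al}{2})\gamma.
\]
We also need the dependence of $P_{z_0}$ on $z_0$. For two nearby boundary points $z_0,z_1$ at mutual distance $\rho$, compare $P_{z_0}$ and $P_{z_1}$ on a half-cylinder of size $\rho$ centered near both points. The $x_n^{2-\al+\beta}$ coefficients differ by at most $C\rho^{(1-\al/2)\gamma}$, since $f/a^{nn}\in\C^\gamma$. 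This gives
\[
|a(z_0)-a(z_1)|\le C\rho^{\mu-1}=C\,s[z_0,z_1]^{\gamma}\cdot s^{\frac{2}{2-\al}(1-\al+\beta)}.
\]
Here the extra factor is harmless because $\beta>\al-1$. Hence $a(\cdot)$ is $\C^\gamma$ along the boundary, and $a(z_0)=v_n(z_0)$.

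\textbf{Step 2: interior estimates at the natural scale.} Fix $z=(x,t)\in Q_{1/2}^+$, set $r=x_n$, and let $z_0=(x',0,t)$. Rescale $w:=(v-P_{z_0})_{r,z_0}$ via \eqref{eq:scale at x_n=0}, and work on $Q_{3/4}(0,1,1)$, where the equation is uniformly parabolic.

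\emph{Size of $w$.} By Step 1, $\|w\|_{L^\infty}\le Cr^{\mu-(2-\al)}=Cr^{\beta+(1-\al/2)\gamma}$.

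\emph{Equation for $w$.} Its right-hand side equals
\[
r^\beta\Bigl[y_n^\beta\bigl(f_r-f(z_0)\,a^{nn}_r/a^{nn}(z_0)\bigr)+\text{lower-order terms from } b,c \text{ acting on } P\Bigr].
\]
Using the Hölder continuity of $f$ and $a^{ij}$, $\beta>\al-1$, and the scaling factors in \eqref{eq:scaled coefficients}, its $C^\gamma(Q_{3/4}(0,1,1))$ norm is bounded by $Cr^{\beta+(1-\al/2)\gamma}$.

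\emph{Schauder bound.} The classical Schauder estimate \eqref{eq:classical schauder} then yields
\[
\|w\|_{C^{2,\gamma}(Q_{1/2}(0,1,1))}\le Cr^{\beta+(1-\al/2)\gamma}.
\]
Unscale this using the weighted scaling identities of Section \ref{sec:scaling} and \eqref{eq:seminorm_scaling weight}. One obtains that $x_n^{\al-\beta}D^2(v-P_{z_0})$, $x_n^{-\beta}(v-P_{z_0})_t$ and $D(v-P_{z_0})$ are $O(r^{(1-\al/2)\gamma})$ in sup norm, with $\C^\gamma_{int}$ seminorms bounded by $C$.

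\textbf{Step 3: identify the traces.} Compute $x_n^{\al-\beta}D^2P_{z_0}$. Only its $nn$ entry is nonzero, and it equals $-f(z_0)/a^{nn}(z_0)$. Also $(P_{z_0})_t=0$ and $DP_{z_0}=a(z_0)e_n+O(x_n^{1-\al+\beta})$. Combined with the decay in Step 2, these give the boundary traces
\[
x_n^{-\beta}v_t\to 0,\qquad x_n^{\al-\beta}D^2v\to-\operatorname{diag}\{0,\dots,0,f/a^{nn}\}
\]
at rate $s^\gamma$. This yields the $\C^\gamma_{bd}$ bounds for these quantities, using Step 1 to compare with $f(z_0)$ along the boundary.

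\textbf{Step 4: gradient and assembly.} For $Dv$, the same comparison gives $|Dv(z)-a(z_0)e_n|\le C(x_n^{1-\al+\beta}+r^{(1-\al/2)\gamma})$. Since $1-\al+\beta\ge(1-\al/2)\gamma$, the condition $\gamma<\gamma^*$ is exactly what makes the power $x_n^{1-\al+\beta}=s^{\frac{2(1-\al+\beta)}{2-\al}}$ dominate $s^{\gamma}$. Lemma \ref{lem:euclidean_patching} then assembles the bd and int seminorms into full $\C^\gamma$ bounds on $Q_{1/2}^+$.

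\textbf{Main obstacle.} I expect the main difficulty to be the careful accounting in Step 2. One must check that the lower-order terms generated by $b,c$ acting on $P_{z_0}$, and the mismatch $a^{ij}-a^{ij}(z_0)$ multiplying $D^2P_{z_0}\sim x_n^{\beta-\al}$, are small in $C^\gamma$ after rescaling with exactly the exponent $r^{\beta+(1-\al/2)\gamma}$. This is where the hypotheses $\beta>\al-1$ and $\gamma<\gamma^*$ enter.
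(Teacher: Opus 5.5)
Your proposal is correct and follows essentially the same route as the paper. Both arguments subtract the boundary expansion $P_{z_0}$ from Theorem~\ref{thm:polynomial approximation}, apply the classical interior Schauder estimate to the rescaled difference to get the $r^{\beta+(1-\frac{\alpha}{2})\gamma}$ bound, read off the traces from $x_n^{\alpha-\beta}D^2P_{z_0}$ and $(P_{z_0})_t=0$, and assemble the boundary and interior seminorms via Lemma~\ref{lem:euclidean_patching}. Your Step 1, which proves $|a(z_0)-a(z_1)|\le C\, s[z_0,z_1]^{\gamma}$, makes explicit a point the paper uses only implicitly when it bounds $[v_{x_n}]_{\C^\gamma_{bd}}$ through $[w_{x_n}]_{\C^\gamma_{bd}}$, even though $w=v-P_{z_0}$ depends on $z_0$.
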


\begin{proof}
For simplicity, we assume that $\|v\|_{L^\infty(Q_1^+)} + \|f\|_{\C^\gamma(Q_1^+)} \leq 1$ and only present the estimates for $v_{x_n},~x_n^{-\beta}v_t$ and $x_n^{\alpha-\beta}D^2 v$, since the estimates for $v$, $D_{x'}v$ can be obtained via analogous arguments. 

By Theorem \ref{thm:polynomial approximation}, $v$ can be approximated by polynomial $P_{z_0}$ for each $z_0\in \partial_p Q_{1/2}^+\cap \{x_n=0\}$. Let $w = v - P_{z_0}$, then we have
\[
|w(z)| \leq C \left(s[z,z_0]^{\frac{2}{2-\alpha}}\right)^{2-\alpha+\beta+(1-\frac{\alpha}{2})\gamma} \quad \text{in } Q_{1/4}^+(z_0).
\]
Considering the scaling $v_{r,z_0}$ and $w_{r,z_0}$ defined in \eqref{eq:scale at x_n=0}, this yields 
\begin{align*}
\|w_{r,z_0}\|_{C^{2,\gamma}(Q_{1/2}(0,1,1))} 
&\leq C \left(\|w_{r,z_0}\|_{L^\infty(Q_{3/4}(0,1,1))}+r^\beta \|y_n^{\beta}\wt{f}_{r,z_0}\|_{\C^{\gamma}(Q_{3/4}(0,1,1))}\right)  \\
&\leq C \left(r^{\alpha-2} \|w\|_{L^\infty(Q_{2r}^+(z_0))}+r^{\beta+\sigma} \|\wt{f}\|_{\C^{\gamma}(Q_{3/4}(0,1,1))}\right) \\
&\leq C r^{\beta+(1-\frac{\alpha}{2})\gamma},
\end{align*}
where 
\be\label{eq:inhomogeneous term of w}
\begin{aligned}
\wt{f}(z)=f(z)-&\frac{f(z_0)}{a^{nn}(z_0)}a^{nn}(z)+a(z_0)b^{n}(z)x_n^{\al-\beta}\\
&-\frac{f(z_0)}{a^{nn}(z_0)(1-\al+\beta)}b^n(z)x_n-a(z_0)c(z)x_n^{1-\beta}.  
\end{aligned}
\ee
Note that 
\[
x_n^{\alpha-\beta}D^2 v=x_n^{\alpha-\beta}D^2 w-x_n^{\alpha-\beta}D^2 P_{z_0}=x_n^{\alpha-\beta}D^2 w-  \operatorname{diag}\{0,\cdots,\frac{f}{a^{nn}}(z_0)\} ,
\] 
we obtain
\[
[y_n^{\al-\beta}D_y^2  v_{r,z_0}]_{C^\gamma(Q_{1/2}(0,1,1))} =[y_n^{\al-\beta}D_y^2  w_{r,z_0}]_{C^\gamma(Q_{1/2}(0,1,1))}\leq C r^{\beta+(1-\frac{\alpha}{2})\gamma}
\]
and
\[
|x_n^{\alpha-\beta}D^2 v(z)+\operatorname{diag}\{0,\cdots,\frac{f}{a^{nn}}(z_0)\}| = |x_n^{\al-\beta}D^2 w(z)| \leq C \left(s[z,z_0]^{\frac{2}{2-\alpha}}\right)^{(1-\frac{\alpha}{2})\gamma}.
\]
This implies that the quantity $x_n^{\alpha-\beta}D^2 w(z_0) $ is well-defined, yielding
\[
x_n^{\alpha-\beta}D^2 v(z_0) = -{ \operatorname{diag}\{0,\cdots,\frac{f}{a^{nn}}(z_0)\}}\quad \text{for all } z_0 \in \partial_p  Q_{1/2}^+ \cap \{x_n=0\}.
\]

Consequently, by varying $z_0$ over the degenerate boundary, we obtain the boundary estimate
\[
[x_n^{\alpha-\beta}D^2 v]_{\mathscr{C}_{bd}^\gamma(Q_{1/2}^+)} \leq C,
\]
and the interior estimate
\[
[x_n^{\al-\beta}D_x^2 v]_{\C_{int}^{\gamma}(Q_{1/2}^+)} = 
\sup_{\substack{z_0 \in \partial_p Q_{1/2}^+ \cap \{x_n = 0\} \\ 0 < r < 1/2 }} \frac{[y_n^{\al-\beta}D_y^2  v_{r,z_0}]_{C^\gamma(Q_{1/2}(0,1,1))}}{r^{\beta+(1-\frac{\alpha}{2})\gamma}} \leq C.
\]
Apply Lemma \ref{lem:euclidean_patching} to $x_n^{\al-\beta}D^2 v$ then gives
\[
[x_n^{\al-\beta}D^2 v]_{\C^\gamma(Q_{1/2}^+)}\leq C(n, \alpha, \gamma) \left( [x_n^{\al-\beta}D^2 v]_{\C_{bd}^{\gamma}(Q_{1/2}^+)} + [x_n^{\al-\beta}D^2 v]_{\C_{int}^{\gamma}(Q_{1/2}^+)} \right)\leq C.
\]

Similarly, by noting that $x_n^{-\beta}v_t=x_n^{-\beta}w_t$, we derive that 
\[
|x_n^{-\beta}v_t(z)| = |x_n^{-\beta}w_t(z)| \leq C \left(s[z,z_0]^{\frac{2}{2-\alpha}}\right)^{(1-\frac{\alpha}{2})\gamma}
\]
and
\[
[y_n^{-\beta}\partial_tv_{r,z_0}]_{C^\gamma(Q_{1/2}(0,1,1))} =[y_n^{-\beta}\partial_tw_{r,z_0}]_{C^\gamma(Q_{1/2}(0,1,1))}\leq C r^{\beta+(1-\frac{\alpha}{2})\gamma}.
\]
By varying $z_0$ over the degenerate boundary and recalling Lemmas \ref{lem:euclidean_patching} and \ref{lem:interior_seminorm_scaling}, we obtain that
\[
\|x_n^{-\beta}v_t\|_{\C^\gamma(Q_{1/2}^+)} \leq C \quad \text{and} \quad x_n^{-\beta}v_t  = 0 \quad \text{on } \partial_p Q_{1/2}^+ \cap \{x_n=0\}.
\]

Finally, we show that  $v_{x_n}\in \C^\gamma(Q_{1/2}^+)$.
Notice that
\[
w_{r,z_0}(z)=v_{r,z_0}(z)-a(z_0) r^{\al-1} x_n+\frac{f(z_0)r^\beta}{a^{nn}(z_0)(2-\al+\beta)(1-\al+\beta)}x_n^{2-\al+\beta},
\]
then
\[
(w_{r,z_0})_{x_n}(z)=(v_{r,z_0})_{x_n}(z)-a(z_0) r^{\al-1}+\frac{f(z_0)r^\beta}{a^{nn}(z_0)(1-\al+\beta)}x_n^{1-\al+\beta}.
\]
Hence, we obtain that
\begin{align*}
[(v_{r,z_0})_{x_n}]_{C^\gamma(Q_{1/2}(0,1,1))}&=[(w_{r,z_0})_{x_n}]_{C^\gamma(Q_{1/2}(0,1,1))}
+\frac{f(z_0)r^{\beta}}{a^{nn}(z_0)(1-\al+\beta)}[y_n^{1-\al+\beta}]_{C^\gamma(Q_{1/2}(0,1,1))} \\
&\leq C r^{\beta}.
\end{align*}
Since $w=v-P_{z_0}$, we have 
\[
w_{x_n}(z)=v_{x_n}(z)-a(z_0)+\frac{f(z_0)}{a^{nn}(z_0)(1-\al+\beta)}x_n^{1-\al+\beta},
\]
so by the estimate of $w$, it follows that $v_{x_n}(z_0)=a(z_0)$, and
\[
[v_{x_n}]_{\C_{bd}^\gamma(Q_{1/2}^+)}\leq [w_{x_n}]_{\C_{bd}^\gamma(Q_{1/2}^+)}+C[x_n^{1-\al+\beta}]_{\C_{bd}^\gamma(Q_{1/2}^+)}\leq C.
\]
By varying $z_0$ over the degenerate boundary, we derive
\[
[v_{x_n}]_{\C_{int}^{\gamma}(Q_{1/2}^+)} = 
\sup_{\substack{z_0 \in \partial_p Q_{1/2}^+ \cap \{x_n = 0\} \\ 0 < r < 1/2 }} \frac{[(v_{r,z_0})_{x_n}]_{C^\gamma(Q_{1/2}(0,1,1))}}{r^{\al-1+(1-\frac{\alpha}{2})\gamma}} \leq C,
\]
and then, applying Lemma \ref{lem:euclidean_patching}, the conclusion follows.
\end{proof}

\section{Degenerate equations on general domain}\label{sec:proofofmainthm}

In the preceding sections, we established the boundary regularity for the linear degenerate equation with variable coefficients on $Q_1^+$, as given in Section \ref{sub:variable_coefficients}. The objective of the current section is to prove Theorem \ref{thm: existence and uniqueness of general linear equation} by applying previous variable-coefficient estimates.

Recall that the global initial-boundary value problem on a bounded domain $\Omega \subset \mathbb{R}^n$ with a smooth boundary is
\[
    L v = v_t - \omega^\alpha \left( a^{ij}v_{ij} + b^i v_i \right) + cv = \omega^\beta f \quad \text{in } \Omega \times (0,T],
\]
subject to the homogeneous Dirichlet condition on the boundary
\[
    v = 0 \quad \text{on } \partial \Omega \times [0,T],
\]
and  initial datum $g_0 \in C_0(\overline{\Omega})$ vanishes on the boundary $\partial\Omega$. Here, the degeneracy of the operator is captured by the weight function $\omega(x)$, which behaves comparably to the distance function near the boundary; that is,
\[
\lambda d \leq \omega \leq \lambda^{-1} d
\]
for some \(\lambda\in(0,1]\)

for all $x \in \Omega$ sufficiently close to $\partial\Omega$. For this general problem, we shall establish the global existence of solutions, as well as their boundary and global regularity properties.

Before presenting the main result of this section, let us briefly recall our basic hypotheses. Recall that the relevant parameters satisfy
\[
\al\in(1,2),\quad\beta\in (\al-1,\al/2]\quad \text{and}\quad \gamma^*=\frac{2-2\al+2\beta}{2-\al}.
\]
Throughout this section, we assume that the structural conditions \eqref{eq:structure_conditions} hold. In addition, for every $T>0$, all coefficients $a^{ij}$, $b^i$, $c$, together with the source term $f$, are locally H\"older continuous in $\Omega\times(0,T]$.

\begin{thm}\label{thm:existence in thm 1} 
For any initial datum $g_0 \in C_0(\overline{\Omega})$ and any given $T > 0$, the initial-boundary value problem \eqref{eq:general linearized equation} admits a unique classical solution.
\end{thm}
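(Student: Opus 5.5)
We will prove the theorem by regularization, in the same way as Theorem~\ref{thm:Existence and uniqueness}, but now globally on $\Omega\times(0,T]$ with the weight $\omega$. For $\varepsilon\in(0,1]$ we consider
\[
\partial_t v_\varepsilon-(\omega+\varepsilon)^\alpha\bigl(a^{ij}_\varepsilon (v_\varepsilon)_{ij}+b^i_\varepsilon (v_\varepsilon)_i\bigr)+c_\varepsilon v_\varepsilon=(\omega+\varepsilon)^\beta f_\varepsilon,\qquad v_\varepsilon=0 \text{ on }\partial\Omega\times(0,T],\qquad v_\varepsilon(\cdot,0)=g_{0,\varepsilon}.
\]
Here $a^{ij}_\varepsilon,b^i_\varepsilon,c_\varepsilon,f_\varepsilon$ are mollified coefficients. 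They satisfy \eqref{eq:structure_conditions} uniformly and converge locally uniformly in $\Omega\times(0,T]$ (in local H\"older norms) to the original ones. The functions $g_{0,\varepsilon}\in C_c^\infty(\Omega)$ converge uniformly to $g_0$. For fixed $\varepsilon$ this problem is uniformly parabolic with smooth data and compatible corner conditions, since $g_{0,\varepsilon}$ vanishes near $\partial\Omega$. Classical theory \cite{lieberman1996second} therefore gives a solution $v_\varepsilon\in C^{2,1}$ up to the boundary. Comparison with the barriers $\pm e^{\Lambda t}(\|g_0\|_{L^\infty}+\|f\|_{L^\infty})$, $\Lambda=\lambda^{-1}$, gives $\|v_\varepsilon\|_{L^\infty}\le C$ uniformly in $\varepsilon$.

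The key step is a uniform-in-$\varepsilon$ modulus of continuity on $\overline\Omega\times[0,T]$, which is needed to apply Arzel\`a--Ascoli.

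\emph{Near the lateral boundary.} We flatten $\partial\Omega$ locally by a smooth map $\varphi$ with $\omega\circ\varphi$ comparable to $y_n$. In these coordinates the equation has the form treated in Section~\ref{sec: boundary holder regularity}, except that the weight is $(\omega\circ\varphi+\varepsilon)^\alpha$ instead of $(y_n+\varepsilon)^\alpha$. The barrier computations of Theorem~\ref{thm:Existence and uniqueness} only use $\lambda y_n\le \omega\le\lambda^{-1}y_n$ and bounds on $D\omega,D^2\omega$. With the choice $\beta=0$ they give
\[
|v_\varepsilon(x,t)|\le C\bigl(\kappa(d(x)^{1/2})+d(x)^{1-\alpha/2}\bigr).
\]
Here $\kappa$ now denotes the modulus of $g_0$ near $\partial\Omega$ at $t=0$. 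For the lateral part of the parabolic boundary we use the version \eqref{eq:c0 estimate for vva x'}, centred at arbitrary boundary points, with the $|x'-y'|^2$ barrier. This gives equicontinuity at every point of $\partial\Omega\times[0,T]$.

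\emph{Near the initial time, away from $\partial\Omega$.} Continuity at $t=0$ follows from the standard barrier $g_{0,\varepsilon}(x_0)\pm\eta\pm K|x-x_0|^2\pm Kt$ together with the uniform modulus of $g_{0,\varepsilon}$.

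\emph{In the interior.} On compact subsets of $\Omega\times(0,T]$ the operators are uniformly parabolic with locally H\"older coefficients, uniformly in $\varepsilon$. Theorem~\ref{thm:standard_interior_boundary_estimates} then gives uniform $C^{2,\gamma}$ bounds.

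Combining these three pieces, a subsequence converges uniformly on $\overline\Omega\times[0,T]$ and in $C^{2,1}_{loc}$ to a classical solution $v$. The limit satisfies $v=0$ on $\partial\Omega$ (from the barrier bound) and $v(\cdot,0)=g_0$.

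\emph{Uniqueness.} Let $w$ be the difference of two classical solutions, so that $w\in C^{2,1}(\Omega\times(0,T])\cap C(\overline\Omega\times[0,T])$ solves the homogeneous equation with zero parabolic boundary data. The degeneracy is harmless here: given $\delta>0$, the function $e^{-\Lambda t}w-\delta$ is negative in a neighbourhood of the parabolic boundary by continuity. Hence its positive maximum, if any, is attained at an interior point with $t>0$, and there the equation yields a contradiction in the usual way. Letting $\delta\to0$ gives $w\le 0$; the same argument applied to $-w$ gives $w=0$.

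\emph{Main obstacle.} I expect the hardest step to be the uniform boundary modulus in curved geometry with the weight $\omega$ in place of $x_n$. The lower-order terms generated by flattening must be absorbed into the barrier inequality $L_\varepsilon w-cw\ge (\omega+\varepsilon)^\beta f$ for $w=d^{2-\alpha}$ in a thin layer $\{d<r_0\}$. I would first verify this using $|\Delta d|\le C$ and the uniform lower bound on $a^{ij}\partial_i d\,\partial_j d$. The dominant term $c_{\alpha,0}\,a^{ij}d_id_j\,(d+\varepsilon)^\alpha d^{-\alpha}$ beats the error terms of order $d^{1-\alpha}(d+\varepsilon)^\alpha$ once $r_0$ is small.
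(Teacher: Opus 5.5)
Your argument is correct and is essentially the paper's proof. Both regularize the weight to $(\omega+\varepsilon)^\alpha$, get an $\varepsilon$-uniform boundary modulus from a barrier of order $d^{2-\alpha}$ combined with $\kappa_0(\tau)$, $M_\tau$ and $\tau=d^{1/2}$, and conclude with interior estimates, Arzel\`a--Ascoli and the maximum principle. The paper differs only in doing the boundary step with one global barrier built from $\omega$ itself (using $|D\omega|\ge C_1$ near $\partial\Omega$), so it needs neither flattening charts nor the $|x'-y'|^2$ term, and in not mollifying the data.

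Two minor remarks. First, your unshifted profile $d^{2-\alpha}$ is if anything safer than the paper's shifted $(\omega+\varepsilon)^{2-\alpha}-\varepsilon^{2-\alpha}$: the latter is only of size $\varepsilon^{1-\alpha}d\ll d^{2-\alpha}$ where $d\ll\varepsilon$, which makes dominating $g_0$ at $t=0$ uniformly in $\varepsilon$ delicate. Second, your claim of compatible corner conditions is not quite right, since first-order compatibility would need $\varepsilon^\beta f_\varepsilon=0$ on $\partial\Omega\times\{0\}$. This is harmless, because you only use continuity of $v_\varepsilon$ at the corner.
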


\begin{proof}
Consider the approximation equation
\[
\left\{
\begin{array}{ll}
L_\va v_\va:=(v_\va)_t-(\omega+\va)^\al (a^{ij}D_{ij} v_\va+b^i D_i v_{\va})+cv_\va=(\omega+\va)^\beta f
&\text{in}~\Omega\times(0,T],\\
v_\va=0 & \text{on}~\partial\Omega\times(0,T],\\
v_\va=g_0& \text{on}~\Omega\times\{0\},
\end{array}
\right.
\]
and by the classical theory for uniformly parabolic equations, for each $\va>0$ there exists a unique solution
\[
v_\va\in C(\ol{\Omega}\times [0,T]) \cap C^{2,1}_{loc}(\Omega\times(0,T]).
\]
Moreover, by the interior Schauder estimates, for every compact set $K\subset\subset \Omega\times (0,T]$,
\[
\|v_\va\|_{C^{2,\gamma}(K)}\leq C_K.
\]
We next show that, for suitable positive constants $\mu$, $M_\tau$, and $A$, one has, for all $\va<1$,
\begin{equation}\label{eq:c0estimate for vva}
|v_\va(x,t)|\le e^{\mu t}\inf_{\tau>0} \Bigl( M_\tau A ((\omega+\va)^{2-\al}-\va^{2-\al})+\kappa_0(\tau)\Bigr)
\quad \text{in } \om\times [0,T],
\end{equation}
where 
\[
\kappa_0(s) := \|g_0\|_{L^\infty(\om \cap \{0 < d(x) < s\})}\quad  \text{and}\quad  M_{\tau}= \sup_{ \delta> \tau }  \frac{\kappa_0(\delta) }{\delta^{2-\alpha}} +1,
\]
which implies that $v_\va$ is uniformly continuous up to the parabolic boundary.  Then, by taking $\tau=\omega^{1/2}$ and noting that $\sup_{ \delta> \tau }  \frac{\kappa_0(\delta) }{\delta^{2-\alpha}}\leq \omega^{\frac{\al}{2}-1}$, we have
\[
|v_\va(x,t)|\leq C \Bigl( \omega^{\frac{\al}{2}}(x)+\kappa_0(\omega^{1/2}(x))\Bigr).
\]
The right hand side decays to $0$ as $x\to \pa\om$ since $\omega\to 0$.  Since the operator is uniformly parabolic away from the boundary, the standard theory ensures $\{v_\va\}$ a uniform modulus of continuity. It then follows from the Arzel\`a--Ascoli theorem that there exist a sequence $\va_k\to 0$ and a continuous function $v$ such that $v_{\va_k}\to v$ uniformly. By the maximum principle, such a limit is unique, then the limit function $v$ solves the initial-boundary value problem \eqref{eq:general linearized equation}. Therefore, the existence and uniqueness follows.

Next, we show \eqref{eq:c0estimate for vva}. Let 
\[
\varphi(t)=\frac{M}{\mu}(e^{\mu t}-1)+\|g_0\|_{L^\infty(\om)},
\]
where $M=(\operatorname{diam}(\om)+1)^\beta\left\| f\right\|_{L^\infty(\om\times [0,T])}$ and $\mu>0$ is a constant satisfying $\mu+c(x,t) > 0$ for all $(x,t)\in \om\times(0,T]$.
Then 
\begin{align*}
&\varphi_t - (\omega+\va)^\beta (a^{ij}\varphi_{ij}+b^i\varphi_i)+c\varphi\\
&=\frac{M}{\mu}(\mu e^{\mu t}+c(e^{\mu t}-1))+\|g_0\|_{L^\infty(\om)}\geq M \geq (\omega+\va)^\beta f
\end{align*}
and $\varphi\geq 0$ on $\pa \om\times (0,T]$, $\varphi(0)\geq g_0$ on $\om\times \{t=0\}$. By the comparison principle, we have 
\[
v_\va \leq \frac{M}{\mu}(e^{\mu t}-1)+\|g_0\|_{L^\infty(\om)}\leq \frac{M}{\mu}e^{\mu T}+\|g_0\|_{L^\infty(\om)}=:C_T.
\]
This gives an upper bound of $v_\va$. Define
\[
\psi(x,t)=A\bigl((\omega+\va)^{2-\al}-\va^{2-\al}\bigr),
\]
then
\begin{align*}
\psi_{x_i}&=A (2-\al)(\omega+\va)^{1-\al}\omega_{x_i},\\
\psi_{x_ix_j}&=A(2-\al)(1-\al)(\omega+\va)^{-\al}\omega_{x_i}\omega_{x_j}+A(2-\al)(\omega+\va)^{1-\al}\omega_{x_ix_j},
\end{align*}
and
\begin{align*}
&\psi_t-(\omega+\va)^\al \left(a^{ij}\psi_{ij}+b^i \psi_i\right)\\
=&A(2-\al)(\al-1)a^{ij}\omega_{x_i}\omega_{x_j}-A(2-\al)(\omega+\va) a^{ij}\omega_{x_ix_j}- A (2-\al)(\omega+\va) b^i\omega_{x_i}\\
\geq & A(2-\al)(\al-1) \lambda |D\omega|^2-A(2-\al)(\omega+\va) a^{ij}\omega_{x_ix_j}- A (2-\al)(\omega+\va) b^i\omega_{x_i}.
\end{align*}
Since $\inf_{\om} (\omega/d)>0$, we have $|D\omega|\geq C_1$ for some positive constant $C_1$ in $\{\omega<\delta_1\}$ with $\delta_1>0$, then 
\[
a^{ij}\omega_{x_i}\omega_{x_j}\geq \lambda |D\omega|^2 \geq \lambda C_1^2.
\]
Therefore, for a fixed sufficiently small $\delta_1>0$ and choosing $A$ sufficiently large, we obtain
\[
L_\va \psi-c\psi \ge (\omega+\va)^\beta f
\qquad \text{in } \{\omega<\delta_1\}\times [0,T],
\]
and
\[
v_\va \le C_T \le \psi
\qquad \text{on } \{\omega\ge \delta_1\}\times [0,T].
\]
As defined in \eqref{eq:def_omega_0}, let 
\[
\Psi_{\tau}(x,t)=e^{\mu t} (M_\tau \psi(x)+\kappa_0(\tau)).
\]
Then 
\[
L_\va \Psi_\tau=e^{\mu t} M_{\tau} (L_{\va}\psi-c\psi)+(\mu+c)\Psi_\tau\geq (\omega+\va)^\beta f.
\]
Notice that $\Psi_\tau\geq 0=v_\va$ on $\pa\om\times (0,T]$ and for sufficiently large $A$, we have 
\[
\Psi_\tau(x,0) \geq \frac{A}{2}\tau^{2-\al} \geq \| g_0\|_{L^\infty(\om)} \quad \text{in}~\om\cap \{d(x)\geq \tau\} 
\]
and 
\[
\Psi_\tau(x,0) \geq \kappa_0(\tau)\geq g_0(x)\quad \text{in}~\om\cap \{0<d(x)<\tau\}.
\]
The comparison principle then yields
\[
v_\varepsilon(x,t) \leq \Psi_{\tau}(x,t) \quad \text{in } \om\times [0,T].
\] 
By symmetry, $v_\varepsilon(x,t) \geq -\Psi_{\tau}(x,t)$ holds as well. Then we obtain that
\[
|v_\va(x,t)|\leq e^{\mu t}\inf_{\tau>0} \Bigl( M_\tau A ((\omega+\va)^{2-\al}-\va^{2-\al})+\kappa_0(\tau)\Bigr).
\]
\end{proof}

\begin{thm}\label{thm:localize the schauder estimates in time variable}
Let $\om,\al,\beta,\gamma,a^{ij},b^i,c$ and $f$ be given as in Theorem \ref{thm: existence and uniqueness of general linear equation}. Let $v $ satisfy \eqref{eq:general linearized equation}. Then $v\in\mathscr{C}^{2+\gamma}(\overline{\Omega}\times (0,T])$ and  there exists $C>0$ which depends only on $n,\al,\beta,\lambda,\gamma,\om$ and the $\mathscr{C}^{\gamma}(\overline{\Omega}\times (0,T])$ norms of $a^{ij},b^i,c$ such that
\[
\left\|v\right\|_{\mathscr{C}^{2+\gamma}(\om\times [\frac{T}{2},T])}\leq C\left(\left\|v\right\|_{L^\infty(\om\times [0,T])}+\left\|f\right\|_{\mathscr{C}^\gamma(\om\times [0,T])}\right).
\]
\end{thm}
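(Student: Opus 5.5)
The plan is to localize in space with a partition of unity, flatten the boundary near each boundary point, and reduce to the half-cylinder estimate of Theorem \ref{thm:generalized aij regularity }. Time is localized by starting each local cylinder at some time $t\ge T/4$, so that no information on the initial data is needed beyond $\|v\|_{L^\infty}$. The interior part is classical.

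\textbf{Step 1 (interior).} Cover $\{d\ge \delta_0\}\times[T/2,T]$ by finitely many parabolic cylinders contained in $\{d\ge\delta_0/2\}\times[T/4,T]$. On these the operator is uniformly parabolic with $C^\gamma$ coefficients. The metric $s$ is comparable to the Euclidean parabolic distance there, up to constants depending on $\delta_0$ and on the ratio between the scalings $r^{2-\alpha}$ and $r^2$. Hence the interior estimate \eqref{eq:classical schauder} gives the $C^{2,\gamma}$ bound, and the weights $d^{-\beta}$, $d^{\alpha-\beta}$ are smooth and bounded there.

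\textbf{Step 2 (boundary flattening).} Fix $x_0\in\partial\Omega$ and a smooth diffeomorphism $\varphi$ flattening $\partial\Omega$ near $x_0$, chosen so that $y_n=d(x)$ in a neighborhood (this uses smoothness of $\partial\Omega$). In the $y$-coordinates the equation becomes
\[
v_t-y_n^\alpha\bigl(\tilde a^{ij}v_{y_iy_j}+\tilde b^iv_{y_i}\bigr)+cv=y_n^\beta\tilde f,
\]
where
\[
\tilde a=(\omega/y_n)^\alpha\,D\varphi\,a\,D\varphi^T,\qquad \tilde f=(\omega/y_n)^\beta f,
\]
and $\tilde b$ collects $b$ and the second derivatives of $\varphi$ times $a$. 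The key point is that $\omega/d$ is positive and lies in $\C^\gamma$ for the intrinsic metric. Indeed $\omega\in C^3$ vanishes on $\partial\Omega$, so $\omega/d$ is Lipschitz in Euclidean terms, and near the boundary $s\gtrsim |x-y|$ since $\alpha/2<1$. Hence $\tilde a,\tilde b,\tilde f\in\C^\gamma$, with ellipticity controlled by $\lambda$. Scaling (as in \S\ref{sec:scaling}, with a time shift so the cylinder sits inside $[T/4,T]$) places us in Theorem \ref{thm:generalized aij regularity }. That theorem yields
\[
\|v\|_{\C^{2+\gamma}}\le C\bigl(\|v\|_{L^\infty}+\|f\|_{\C^\gamma}\bigr)
\]
on a half-cylinder of fixed size around each boundary point with $t\in[T/2,T]$. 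The constants depend on the boundary geometry and on the radius needed to reach the $\varepsilon$-smallness of Theorem \ref{thm:polynomial approximation}.

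\textbf{Step 3 (pullback and patching).} The weighted quantities transform correctly. Writing $D_x^2v=D\varphi^T(D_y^2v)D\varphi+D^2\varphi\,D_yv$, the extra term carries the factor $d^{\alpha-\beta}D_yv$. Since $\alpha-\beta\ge\alpha/2>0$ and $D_yv\in\C^\gamma$, this term is in $\C^\gamma$ (the Hölder seminorm of $d^{\alpha-\beta}$ in the metric $s$ is finite because $\alpha-\beta\ge(1-\alpha/2)\gamma$). Also $y_n$ versus $d$ is an identity, and $\omega$ versus $d$ differ by a $\C^\gamma$ positive factor. Finally, finitely many local charts are combined into a global $\C^\gamma$ seminorm by the triangle inequality: pairs of points at intrinsic distance below a fixed threshold lie in a common chart, and pairs far apart are controlled by the sup norms.

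\textbf{Main obstacle.} The delicate step is Step 2: checking that the transformed coefficients really lie in $\C^\gamma$ with respect to $s$, and that the lower-order terms produced by flattening, together with the factor $(\omega/d)^\alpha$, stay within the hypotheses of Theorem \ref{thm:generalized aij regularity }. I would first verify the embedding $C^{0,1}\subset\C^\gamma$ near the boundary, using $\bar s[x,y]\ge c|x-y|$ for $|x-y|\le 1$. Once that holds, the remaining bookkeeping is routine.
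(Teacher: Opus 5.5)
Your proposal is correct and follows essentially the same route as the paper: an interior region handled by classical Schauder theory, boundary charts whose normal coordinate is $d$ so that the factors $(\omega/d)^\alpha$ and $(\omega/d)^\beta$ are absorbed into the transformed coefficients and source, rescaling to apply Theorem \ref{thm:generalized aij regularity } on cylinders lying in $t\ge T/4$, and a finite covering in space and time. You also fill in details the paper leaves implicit: Euclidean Lipschitz functions lie in $\C^\gamma$ because $\bar s[x,y]\ge c|x-y|$, and the extra term $d^{\alpha-\beta}D^2\varphi\,D_yv$ from the change of variables is harmless since $\alpha-\beta>(1-\alpha/2)\gamma$.
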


\begin{proof}
We first decompose the compact domain $\om$ into a finite union of compact subdomains,
\[
\om=\om^0 \cup \left(\bigcup_{l\geq 1} \om^l \right),
\]
in such a way that
\[
\operatorname{dist}(\om^0,\pa\om)\geq \frac{\rho}{2}>0,
\]
and, for every $l\geq 1$,
\[
\om^l=B_{\rho}(x_l)\cap \ol{\om},
\]
where $B_{\rho}(x_l)$ denotes the ball of radius $\rho>0$ centered at some point $x_l\in \pa\om$.
On the interior subdomain $\om^0$, the operator $L$ is uniformly parabolic. Hence, the classical Schauder theory for linear parabolic equations applies on $\om^0$, since the degenerate distance is equivalent there to the standard Euclidean distance.

Next, we focus on the domains $\om^l$, $l\geq 1$, which are close to the boundary of $\om$. For $\mu>0$, denote by $\ol{Q}_{\mu}$ the cylinder
\[
\ol{Q}_{\mu}=\ol{B}_{\mu}^+\times [0,\mu^{2-\al}],
\]
where $\ol{B}_{\mu}^+=\left\{(x_1,\dots,x_n)\in B_{\mu}(0)~\middle|~x_n\geq 0\right\}$ is a half ball. We select smooth charts $\Upsilon_l: \ol{B}_{\mu}^+\rightarrow \ol{\om}^l $ such that they map $\ol{B}_{\mu}^+\cap \{x_n=0\}$  onto $\ol{\om}^l\cap \pa\om$ and $\Upsilon_l(0)=x_l$. This is possible if $\rho$ is chosen sufficiently small. Under the change of coordinates induced by the charts $\Upsilon_l$, the operator, restricted on each $\ol{\om}^l\times[0,\mu^{2-\al}]$, is transformed to an operator $\ol{L}_l$ of the form
\[
\ol{L}_l \ol{v}=\ol{v}_t-x_n^\al (\ol{a}^{ij}_l \ol{v}_{ij}+\ol{b}_l^i\ol{v}_i)+\ol{c}_l\ol{v}=x_n^\beta \ol{f}
\]
defined on $\ol{Q}_{\mu}$.

Moreover, by composing the flattening map with a suitable transformation, the charts $\Upsilon_l$ can be chosen appropriately so that the coefficients of $\ol{L}_l$ satisfy
\[
\ol{a}_l^{ij}\xi_i\xi_j\geq \lambda |\xi|^2>0\quad \forall ~\xi\in \R^n\setminus \{0\}
\]
and
\[
\|\ol{a}_l^{ij}\|_{\C^\gamma(\ol{Q}_{\mu})}+\|\ol{b}_l^i\|_{\C^\gamma(\ol Q_{\mu})}+\|\ol{c_l}\|_{\C^\gamma(\ol Q_{\mu})}\leq \frac{1}{\ol{\lambda}},
\]
for some positive constant $\ol{\lambda}$ and $\lambda$.

By scaling on $\ol{v}$ similar to \eqref{eq:scale at x_n=0}, we have terms $a_\mu^{ij}$, $b_{\mu}^i$, $c_\mu$ and $f_\mu$ similar to \eqref{eq:scaled coefficients}.  Therefore, we obtain that
\[
\|v_\mu\|_{\C^{2+\gamma}(Q_{1/2}^+)}\leq C \left( \left\|v_\mu \right\|_{L^\infty(Q_{1}^+)}+\|f_{\mu}\|_{\C^\gamma(Q_{1}^+)}\right).
\]
Scaling back $v_{\mu}$, we have 
\[
\|\ol{v}\|_{\C^{2+\gamma}(B_{\mu/2}^+\times (\mu^{2-\al}-(\mu/2)^{2-\al},\mu^{2-\al}])}\leq C \left( \left\|\ol{v}\right\|_{L^\infty(\ol{Q}_{\mu})}+\|\ol{f}\|_{\C^\gamma(\ol{Q}_{\mu})}\right),
\]
and
\[
x_n^{-\beta} \ol{v}_{t}(z_0)=0 \quad \text{and}\quad  x_n^{\alpha-\beta}D^2 \ol{v}(z_0) = -{ \operatorname{diag}\{0,\cdots,\frac{\ol{f}}{\ol{a}^{nn}}(z_0)\}}
\]
for all  $z_0 \in \partial_pQ_{1/2}^+ \cap \{x_n=0\}$. Since $\pa\Omega$ is smooth, then $\Upsilon_l$ is smooth, so by finite covering,  we obtain that
\[
\|v\|_{\C^{2+\gamma}(\Omega^l\times [\frac{T}{2},T])}\leq C \left( \left\|v\right\|_{L^\infty(\Omega^l\times [0,T])}+\|f\|_{\C^{\gamma}(\Omega^l\times [0,T])}\right).
\]
Combining the estimates on each $\om^l$, we obtain that
\[
\|v\|_{\C^{2+\gamma}(\Omega\times [\frac{T}{2},T])}\leq C \left( \left\|v\right\|_{L^\infty(\Omega\times [0,T])}+\|f\|_{\C^{\gamma}(\Omega\times [0,T])}\right).
\]
Moreover, through the inverse flattening map, and noting that 
\[
\overline{f}(\cdot,t)=(\frac{\omega}{d})^\beta f(\Upsilon_l \cdot,t)\quad \ol{a}^{ij}(\cdot,t)=(\frac{\omega}{d})^\al a^{ij}(\Upsilon_l \cdot,t)
\] 
we have 
\[
\omega^{-\beta} v_{t}=0 \quad \text{and}\quad  \omega^{\alpha-\beta}D^2 v = -\frac{f}{a^{\nu\nu}}\nu \otimes\nu \quad \text{on } \partial\Omega\times(0,T].
\]
\end{proof}

\begin{lem}\label{lem:a supersolution up to the bottom}
Let $Q_r^0=B_r^+\times (0,r^{2-\alpha})$, $\sigma=(1-\al/2)\gamma$ and let $v,f$ be as in Theorem \ref{thm:generalized aij regularity }. If $f(x',0,0)=0$ for all $|x'|<1$ and $v(x,0)\equiv 0$, then
\[
|v|\leq w(x,t)=2Ntx_n^{\beta+\sigma}+M|x|^2\quad \text{in}~Q_{1/2}^0
\]
for some large constants $M$ and $N$. Moreover, $v\in \C^{2+\gamma}(Q_{1/2}^0)$ with 
\[
\|v\|_{\C^{2+\gamma}(Q_{1/2}^0)}\leq C \|f\|_{\C^\gamma(B_1^+\times (0, 2^{\al-2}))}.
\]
\end{lem}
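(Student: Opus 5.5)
The plan has two parts. First, a comparison argument with the barrier $w$ on $Q_{1/2}^0$ (or a slightly larger cylinder) gives the pointwise bound. Second, that bound feeds the Schauder machinery of Theorem~\ref{thm:polynomial approximation} and Theorem~\ref{thm:generalized aij regularity } near the corner $\{x_n=0,\,t=0\}$.

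We normalize $\|f\|_{\C^\gamma}\le 1$. Since $f(x',0,0)=0$ and $\sigma=(1-\alpha/2)\gamma$, the H\"older condition with respect to $s$ gives $|f(x,t)|\le C(x_n^{\sigma}+t^{\gamma/2})$. The obvious barrier would be $x_n^{\sigma}$ in $x$ and something like $t^{\gamma/2}$ in $t$. Given the stated form of $w$, I would instead use $t\le t^{\gamma/2}$-type absorption, working on the short time interval $t<r^{2-\alpha}$ where $t^{\gamma/2}\lesssim$ the relevant scale.

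Computing $Lw$ splits into two pieces. The term $2Nx_n^{\beta+\sigma}$ comes from $\partial_t$. From the spatial part we get
\[
-x_n^\alpha a^{nn}\,2Nt(\beta+\sigma)(\beta+\sigma-1)x_n^{\beta+\sigma-2}\ \ge 0,
\]
which is nonnegative because $\beta+\sigma<1$: indeed $\gamma<\gamma^*$ gives $\beta+\sigma<\beta+1-\alpha+\beta\le 1$. The lower-order terms are $O(Ntx_n^{\alpha+\beta+\sigma-1})$ and $O(Mx_n^\alpha)$; they are controlled by choosing the cylinder small and then rescaling. The main positive term $2Nx_n^{\beta+\sigma}$ dominates $x_n^\beta\cdot Cx_n^\sigma$. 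The remaining source part $x_n^\beta t^{\gamma/2}$ is absorbed by the positive term $2Nt(\beta+\sigma)(1-\beta-\sigma)a^{nn}x_n^{\alpha+\beta+\sigma-2}$ when $x_n$ is small relative to $t$, and by $2Nx_n^{\beta+\sigma}$ otherwise. If this split does not close, I would adjust by replacing $t$ with a power $t^{\gamma/(2-\alpha)\cdot\ldots}$. The $M|x|^2$ term handles the lateral boundary $|x|=1/2$, where $|v|\le\|v\|_\infty\le M/4$; here $\|v\|_\infty\le C$ by the maximum principle with zero initial data. On $x_n=0$ and $t=0$ we have $v=0\le w$. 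The comparison principle then gives $|v|\le w$, with $M|x|^2$ contributing positively since $-x_n^\alpha a^{ij}\partial_{ij}(M|x|^2)=-2Mx_n^\alpha\operatorname{tr}a$. That contribution is negative, so it must be absorbed; this is the delicate point, and I would compensate by adding $C Mt$ to $w$ or by noting that this error $Mx_n^\alpha$ is small compared with $Nx_n^{\beta+\sigma}$ once $N\gg M$, using $\alpha>\beta+\sigma$.

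For the regularity part, $|v|\le w$ gives, near any corner point $z_0=(x_0',0,0)$, the bound
\[
|v(z)|\le C\bigl(s[z,z_0]^{2/(2-\alpha)}\bigr)^{2-\alpha+\beta+\sigma}.
\]
To see this, use $t\lesssim s^2$ and $x_n\lesssim s^{2/(2-\alpha)}$ within intrinsic balls, together with the fact that $t\,x_n^{\beta+\sigma}$ has the right homogeneity, replacing $M|x|^2$ by $M|x-x_0|^2$ via translation. This corresponds to $P_{z_0}\equiv0$, consistent with $f(z_0)=0$ and $a(z_0)=0$. For boundary points $z_0$ with $t_0>0$, Theorem~\ref{thm:polynomial approximation} already applies on cylinders that fit above $t=0$. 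For cylinders reaching $t=0$ I would rerun the iteration of Theorem~\ref{thm:polynomial approximation} with zero initial data, which the barrier makes valid uniformly. Then I would repeat the argument of Theorem~\ref{thm:generalized aij regularity }: rescale by $v_{r,z_0}$, apply classical Schauder estimates (Theorem~\ref{thm:standard_interior_boundary_estimates}, including near $t=0$ with zero initial data and compatible $f$), and patch boundary and interior seminorms via Lemma~\ref{lem:euclidean_patching}.

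I expect the main obstacle to be the barrier computation: specifically, absorbing the time part of $f$ and the negative $M$-term near $t=0$ while keeping $\beta+\sigma<1$. The remaining steps are repetitions of earlier arguments.
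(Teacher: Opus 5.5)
Your proposal follows the paper's proof. It uses the same barrier $w$ and the same two-regime absorption of $x_n^\beta t^{\gamma/2}$: by $Nx_n^{\beta+\sigma}$ when $t\le x_n^{2-\alpha}$, and by $Nt\,x_n^{\alpha+\beta+\sigma-2}$ otherwise. It takes $N\gg M$ to absorb $-2Mx_n^\alpha\operatorname{tr}a$ via $x_n^\alpha\le x_n^{\beta+\sigma}$, and then feeds the corner decay into the rescaled Schauder and patching argument of Theorem~\ref{thm:generalized aij regularity }.

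The split you hedged on does close, so no change to the time factor is needed. For $t\ge x_n^{2-\alpha}$ one has $t^{\gamma/2-1}\le x_n^{(2-\alpha)(\gamma/2-1)}=x_n^{\alpha+\sigma-2}$, which gives $x_n^\beta t^{\gamma/2}\le t\,x_n^{\alpha+\beta+\sigma-2}$. Of your two fixes for the $M$-term, only $N\gg M$ is usable. Adding $CMt$ to $w$ would leave only $|v|\lesssim t\sim\rho^{2-\alpha}$ near the corner, which destroys the $\rho^{2-\alpha+\beta+\sigma}$ decay the rest of the argument needs.

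One step is wrong as written. The maximum principle from zero initial data does not bound $\|v\|_{L^\infty}$, because the lateral boundary $(\partial B_1\cap\{x_n>0\})\times(0,1)$ of $Q_1^0$ carries arbitrary data. With $f\equiv 0$ and suitable nonzero lateral data one gets $v\not\equiv 0$ in $Q^0_{1/2}$, so the displayed estimate with only $\|f\|_{\C^\gamma}$ on the right cannot hold. The paper's proof chooses $M\ge r^{-2}\|v\|_{L^\infty(Q_r^0)}$. The correct conclusion is therefore $\|v\|_{\C^{2+\gamma}(Q^0_{1/2})}\le C(\|v\|_{L^\infty}+\|f\|_{\C^\gamma})$, which is the form used in the proof of Theorem~\ref{thm: existence and uniqueness of general linear equation}(ii).

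Finally, at boundary points $z_0=(x_0',0,t_0)$ with $t_0>0$ you do not need to rerun the iteration down to $t=0$. That would require versions of Lemma~\ref{lem:solution approximation lem} and Proposition~\ref{lem:approximation lem} with an initial time, which are not available. Instead, set $\tau_0=t_0^{1/(2-\alpha)}$ and argue in two regimes.
\begin{enumerate}
\item[(a)] Apply Theorem~\ref{thm:polynomial approximation} to $v_{\tau_0,z_0}$. Its sup norm and the $\C^\gamma$ norm of its source $\tau_0^\beta f_{\tau_0}$ are both $O(\tau_0^{\beta+\sigma})$, by the corner bound and $f(x_0',0,0)=0$. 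This gives the expansion for $\rho\le\tau_0/4$, together with $|a(z_0)|\le C\tau_0^{1-\alpha+\beta+\sigma}$.
\item[(b)] For $\rho\ge\tau_0/4$, the corner bound together with $|f(z_0)|\le C\tau_0^{\sigma}$ controls $v-P_{z_0}$ directly, since $1-\alpha+\beta+\sigma>0$.
\end{enumerate}
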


\begin{proof}
By the equation, we compute that 
\begin{align*}
&w_t-x_n^\al \left(a^{ij}D_{ij}w+b^iD_i w\right)+cw\\
=& 2Nx_n^{\beta+\sigma}-x_n^\alpha \Bigl(2 a^{nn}Nt (\beta+\sigma) (\beta+\sigma-1)x_n^{\beta-2+\sigma}+2M\sum_{i=1}^{n}a^{ii}\Bigr)\\
& - x_n^\alpha \Bigl(2b^n Nt (\beta+\sigma)x_n^{\beta+\sigma-1}+2Mb^i x_i\Bigr)+c \Bigl(2Nt x_n^{\beta+\sigma}+M|x|^2\Bigr)\\
=& \Bigl(2N+2cNt\Bigr)x_n^{\beta+\sigma}+2a^{nn}Nt(\beta+\sigma)(1-\beta-\sigma)x_n^{\al+\beta+\sigma-2}+cM|x|^2\\
&-2b^n Nt (\beta+\sigma)x_n^{\al+\beta+\sigma-1}-\Bigl(2M \sum_{i=1}^{n}a^{ii}+2M b^i x_i \Bigr)x_n^\al.
\end{align*}
By $\C^\gamma$-regularity of $f$ and $f(x',0,0)=0$ for all $|x'|<1$, we have 
\[
v_t-x_n^\al \left(a^{ij}D_{ij}v+b^iD_i v\right)+cv=x_n^\beta f=x_n^\beta (f(x,t)-f(x',0,0))\leq Cx_n^\beta (x_n^{\sigma}+t^{\frac{\sigma}{2-\al}}).
\]
Notice that there exists $r>0$ such that $|b^n|(\beta+\sigma) r+|c| r^{2-\al}\leq 1/2$, then by choosing $N$ sufficiently large, depending on $M$, we have 
\[
w_t-x_n^\al \left(a^{ij}D_{ij}w+b^iD_i w\right)+cw\geq C (x_n^{\beta+\sigma}+x_n^\beta t^{\frac{\sigma}{2-\al}})  \quad \text{in}~Q_r^0.
\]
Indeed, it suffices to show that 
\[
C x_n^{\beta}t^{\frac{\sigma}{2-\al}}\leq N x_n^{\beta+\sigma}+2a^{nn}Nt(\beta+\sigma)(1-\beta-\sigma)x_n^{\al+\beta+\sigma-2}.
\]
When $0<t<x_n^{2-\al},$ it is trivial. When $t>x_n^{2-\al}$,  we have 
\[
t^{\frac{\sigma}{2-\al}}x_n^\beta =tx_n^\beta\cdot t^{\frac{\al+\sigma-2}{2-\al}}\leq tx_n^\beta\cdot x_n^{\al+\sigma-2}
\]
since $\sigma<2-\alpha$. When $t=0$ or $x_n=0$,  $v=0\leq w$. On $\pa_p Q_r^0 \cap \{x_n>0\}$, by choosing $M$ such that $Mr^2\geq  \left\|v\right\|_{L^\infty(Q_r^0)}$, we have $v\leq w$.  Therefore, by the comparison principle, we have $v\leq w$ in $Q_r^0$. Applying the same argument to $-v$  gives  $|v|\le w$ in $Q_r^0$. 

Then by scaling
\[
|v(x,t)|\leq C \bigl( |x|+|t|^{\frac{1}{2-\alpha}}\bigr)^{2-\alpha+\beta+\sigma}\quad \text{for}~(x,t)\in Q_{1/2}^0,
\]
from which, we have for each fixed $z_0=(x',0,0) \in \partial_p Q_{1/2}^0 \cap \{x_n=0\}$,
\[
|v(z)| \leq C \left(s[z,z_0]^{\frac{2}{2-\alpha}}\right)^{2-\alpha+\beta+\sigma} \quad \text{in } Q_{1}^0.
\] 
Furthermore, combining this with the interior estimate in
Theorem \ref{thm:polynomial approximation}, we obtain that, for each
fixed $z_0\in \partial_p Q_{1/2}^0\cap\{x_n=0\}$, with $P_{z_0}$ defined as in
Theorem \ref{thm:polynomial approximation},
\[
|v(z)-P_{z_0}(z)|
\leq C\left(s[z,z_0]^{\frac{2}{2-\alpha}}\right)^{
2-\alpha+\beta+\sigma}
\quad \text{in } Q_{1/2}^+(z_0).
\]
Here and below, we suppress the intersection with the region where the relevant solution is well-defined. Thus, each domain $A$ should be understood as $A\cap Q_1^0$; after rescaling, $Q_1^0$ is replaced by its corresponding rescaled image.

Let $w(z)=v(z)-P_{z_0}(z)$ and consider the scaling $w_{r,z_0}, v_{r,z_0}$ defined in \eqref{eq:scale at x_n=0}. Since $g\equiv 0$, \eqref{eq:classical schauder} yields
\begin{align*}
\|w_{r,z_0}\|_{C^{2,\gamma}(Q_{1/2}(0,1,1))} 
&\leq C \left(\|w_{r,z_0}\|_{L^\infty(Q_{3/4}(0,1,1))}+r^\beta \|y_n^{\beta}\wt{f}_{r,z_0}\|_{\C^{\gamma}(Q_{3/4}(0,1,1))}\right)  \\
&\leq C \left(r^{\alpha-2} \|w\|_{L^\infty(Q_{2r}^+(z_0))}+r^{\beta+\sigma} \|\wt{f}\|_{\C^{\gamma}(Q_{3/4}(0,1,1))}\right) \\
&\leq C r^{\beta+\sigma},
\end{align*}
where $\wt{f}$ is defined in \eqref{eq:inhomogeneous term of w}. Then we obtain that
\[
[y_n^{\al-\beta}D_y^2  v_{r,z_0}]_{C^\gamma(Q_{1/2}(0,1,1))}=[y_n^{\al-\beta}D_y^2  w_{r,z_0}]_{C^\gamma(Q_{1/2}(0,1,1))} \leq C r^{\beta+\sigma}
\]
and
\[
|x_n^{\al-\beta}D^2 v(z)+ \operatorname{diag}\{0,\cdots,\frac{f}{a^{nn}}(z_0)\}|=|x_n^{\alpha-\beta}D^2 w(z)| \leq C \left(s[z,z_0]^{\frac{2}{2-\alpha}}\right)^{\sigma}.
\]
This implies that the quantity $x_n^{\alpha-\beta}D^2 w(z_0) $ is well-defined, yielding
\[
x_n^{\alpha-\beta}D^2 w(z_0) = 0\quad \text{for all } z_0 \in\partial_p  Q_{1/2}^0 \cap \{x_n=0\}.
\]
Consequently, by varying $z_0$ over the degenerate boundary, we obtain the boundary estimate
\[
[x_n^{\alpha-\beta}D^2 v]_{\mathscr{C}_{bd}^\gamma(Q_{1/2}^+(z_0))} \leq C,
\]
and the interior estimate
\[
[x_n^{\al-\beta}D_x^2 v]_{\C_{int}^{\gamma}(Q_{1/2}^+(z_0))} = 
\sup_{\substack{z_0 \in \partial_p Q_{1/2}^0 \cap \{x_n = 0\} \\ 0 < r < 1/2 }} \frac{[y_n^{\al-\beta}D_y^2  v_{r,z_0}]_{C^\gamma(Q_{1/2}(0,1,1))}}{r^{\beta+\sigma}} \leq C
\]
Apply Lemma \ref{lem:euclidean_patching} to $x_n^{\al-\beta}D^2 v$ then gives
\begin{align*}
&[x_n^{\al-\beta}D^2 v]_{\C^\gamma(Q_{1/2}^+(z_0))}\\
\leq  &C(n, \alpha, \gamma) \left( [x_n^{\al-\beta}D^2 v]_{\C_{bd}^{\gamma}(Q_{1/2}^+(z_0))} + [x_n^{\al-\beta}D^2 v]_{\C_{int}^{\gamma}(Q_{1/2}^+(z_0))} \right)\\
\leq & C. 
\end{align*}

The H\"{o}lder estimates of $v,Dv,x_n^{-\beta}v_t$ can be obtained by similar ways.
\end{proof}

\begin{proof}[\textbf{Proof of Theorem \ref{thm: existence and uniqueness of general linear equation}}]
We omit $\al,\beta,\Omega$ in the subscript of space $\C_{\al,\beta,\Omega}^{2+\gamma}$. The existence and uniqueness follows by Theorem \ref{thm:existence in thm 1} and  the classical maximum principle. And the result  of $(i)$ follows from Theorem \ref{thm:localize the schauder estimates in time variable}.

Next, we prove $(ii)$. Let $\ol{v}(x,t):=v(x,t)-g_0(x)$ and 
\[
\ol{f}:=f+\omega^{\alpha-\beta}(a^{ij}g_{0,ij} + b^ig_{0,i}) - c \omega^{-\beta}g_0.
\]
It is elementary to check that
\[
\omega^{1-\beta}\in \mathscr{C^\gamma_{\al,\beta}}(\om\times [0,T])\quad\mbox{if }\gamma\le \frac{2-2\beta}{2-\alpha}.
\]
Therefore, this $\ol{f}$ belongs to $\mathscr{C^\gamma_{\al,\beta}}(\om\times [0,T])$ and satisfies $\ol{f}=0$ on $\pa\om\times \{t=0\}$ by \eqref{eq:compatibility condition t=0} and $g\in \C_{\al,\beta}^{2+\gamma}(\om)$. Combining the argument in the proof of Theorem \ref{thm:localize the schauder estimates in time variable}, Lemma \ref{lem:a supersolution up to the bottom} and a rescaling argument lead to
\[
\|\bar v\|_{\C^{2+\gamma}(\om\times [0,T_1])}\leq C \left( \left\|\bar v\right\|_{L^\infty(\om\times [0,T])}+\|\bar f\|_{\C^\gamma(\om\times [0,T])}\right)
\]
for some $T_1\leq T$. Combining the above estimates with Theorem \ref{thm:localize the schauder estimates in time variable}, we obtain part (ii) of Theorem \ref{thm: existence and uniqueness of general linear equation}.
\end{proof}

\small


\bigskip

\smallskip

\noindent T. Jin

\noindent Department of Mathematics, The Hong Kong University of Science and Technology\\
Clear Water Bay, Kowloon, Hong Kong\\
Email: \textsf{tianlingjin@ust.hk}

\medskip

\noindent X. Tu

\noindent Department of Mathematics, The Hong Kong University of Science and Technology\\
Clear Water Bay, Kowloon, Hong Kong\\[1mm]
Email:  \textsf{maxstu@ust.hk}, \textsf{tuxushan28561m@gmail.com}

\medskip

\noindent J. Xiong

\noindent School of Mathematical Sciences, Laboratory of Mathematics and Complex Systems, MOE\\ Beijing Normal University, 
Beijing 100875, China\\
Email: \textsf{jx@bnu.edu.cn}

\medskip

\noindent Z. Zheng

\noindent Department of Mathematics, The Hong Kong University of Science and Technology\\
Clear Water Bay, Kowloon, Hong Kong\\[1mm]
Email:  \textsf{zzhengax@connect.ust.hk}

\end{document}